\newcommand{\comments}[1]{}
\let\counterwithin\relax  
\definecolor{dark-gray}{gray}{0.3}
\definecolor{dkgray}{rgb}{.4,.4,.4}
\definecolor{dkblue}{rgb}{0,0,.5}
\definecolor{medblue}{rgb}{0,0,.75}
\definecolor{rust}{rgb}{0.5,0.1,0.1}
\newtheoremstyle{myThm} 
    {\topsep}                    
    {\topsep}                    
    {\itshape}                   
    {}                           
    {\sffamily\bfseries}                   
    {.}                          
    {.5em}                       
    {}  
\newtheoremstyle{myRem} 
    {\topsep}                    
    {\topsep}                    
    {}                   
    {}                           
    {\sffamily}                   
    {.}                          
    {.5em}                       
    {}  
\newtheoremstyle{myDef} 
    {\topsep}                    
    {\topsep}                    
    {}                   
    {}                           
    {\sffamily\bfseries}                   
    {.}                          
    {.5em}                       
    {}  
\theoremstyle{myThm}
\newtheorem{theorem}{Theorem}[section]
\newtheorem{lemma}[theorem]{Lemma}
\newtheorem{proposition}[theorem]{Proposition}
\newtheorem{corollary}[theorem]{Corollary}
\newtheorem{assumption}[theorem]{Assumption}
\theoremstyle{myRem}
 \newenvironment{remark}
  {\pushQED{\qed}\remarkx}
  {\popQED\endremarkx}
\theoremstyle{myDef}
\newtheorem{definition}[theorem]{Definition}
 \newtheorem{example}[theorem]{Example}
\let\originalleft\left
\let\originalright\right
\renewcommand{\left}{\mathopen{}\mathclose\bgroup\originalleft}
\renewcommand{\right}{\aftergroup\egroup\originalright}
\definecolor{mygreen}{rgb}{0.1,0.75,0.2}
\newcommand{\nc}{\normalcolor}
\providecommand{\mathbbm}{\mathbb} 
\newcommand{\R}{\mathbbm{R}}
\newcommand{\N}{\mathbbm{N}}
\newcommand{\F}{\mathcal{F}}
\renewcommand{\phi}{\varphi}
\newcommand{\eps}{\varepsilon}
\newcommand{\indicator}{{\bf{1}}}
\newcommand{\iid}{\stackrel{\text{i.i.d.}}{\sim}}
\newcommand{\E}{\mathbb{E}}
\renewcommand{\P}{\mathbb{P}}
\newcommand{\hatSigma}{\widehat{\Sigma}}
\newcommand{\hatk}{\hat{k}}
\newcommand{\hatC}{\widehat{C}}
\newcommand{\mcB}{\mathcal{B}}
\newcommand{\mcC}{\mathcal{C}}
\newcommand{\mcE}{\mathcal{E}}
\newcommand{\mcN}{\mathcal{N}}
\newcommand{\mcS}{\mathcal{S}}
\newcommand{\mcU}{\mathcal{U}}
\newcommand{\CC}{\mathcal{C}}     
\newcommand{\inparen}[1]{\left(#1\right)}             
\definecolor{mygreen}{rgb}{0.1,0.75,0.2}
\title{Optimal Estimation of Structured Covariance Operators} 
\author{Omar Al-Ghattas, Jiaheng Chen, Daniel Sanz-Alonso and Nathan Waniorek}
\date{University of Chicago}
\makeatletter\@addtoreset{section}{part}\makeatother%
\numberwithin{equation}{section}
\newcommand{\upperRomannumeral}[1]{\uppercase\expandafter{\romannumeral#1}}
\renewcommand{\hat}{\widehat}
\begin{document}
\maketitle 


\begin{abstract}
This paper establishes optimal convergence rates for estimation of structured covariance operators of Gaussian processes. We study banded operators with kernels that decay rapidly off-the-diagonal and $L^q$-sparse operators with an unordered sparsity pattern. For these classes of operators, we find the minimax optimal rate of estimation in operator norm,
identifying the fundamental dimension-free quantities that determine the sample complexity. In addition, we prove that tapering and thresholding estimators attain the optimal rate. The proof of the upper bound for tapering estimators requires novel techniques to circumvent the issue that discretization of a banded operator does not result, in general, in a banded covariance matrix.
To derive lower bounds for banded and $L^q$-sparse classes, we introduce a general framework to lift theory from high-dimensional matrix estimation to the operator setting. Our work contributes to the growing literature on operator estimation and learning, building on ideas from high-dimensional statistics while also addressing new challenges that emerge in infinite dimension.
\end{abstract}

\section{Introduction}\label{sec:introduction}
 Across many problems in statistics, it is essential to constrain the model by imposing structural assumptions such as sparsity, smoothness, the manifold hypothesis, or group invariance. A vast body of work has demonstrated that these and other forms of structure facilitate inference of high-dimensional vectors, large matrices, graphs, networks, and functions \cite{buhlmann2011statistics,tsybakov2008introduction,wainwright2019high,gine2021mathematical,garcia2022mathematical,newman2018networks,eaton1989group,kondor2008group}. This paper sets forth the study of operator estimation and its fundamental limits under natural structural assumptions. 
We consider two classes of covariance operators: banded integral operators with kernels that decay rapidly off-the-diagonal, and a more flexible family of $L^q$-sparse operators where the kernel need not concentrate around its diagonal. For both classes, we establish optimal convergence rates using a general framework to lift theory from high-dimensional matrix estimation to the operator setting. In so doing, we identify the dimension-free quantities that determine the sample complexity.  Additionally, we show that tapering and thresholding estimators achieve the minimax optimal rate.

 Our motivation to study covariance operator estimation stems from the growing interest in data-driven regularizers for inverse problems in function space \cite{stuart2010inverse}.  In imaging applications, unlabeled data are routinely used to learn Tikhonov regularizers and prior covariance models \cite{arridge2019solving}.
Similarly, operational algorithms for numerical weather prediction rely on an ensemble of forecasts to estimate a background prior covariance \cite{evensen2009data}.  In these applications and many others, the data used to specify the prior covariance represent finely discretized functions. As data resolution continues to improve, we wish to understand the fundamental dimension-free, discretization-independent quantities that determine the difficulty of estimating the prior covariance.  Relatedly, operator learning, i.e. the task of recovering an operator from pairs of inputs and outputs or from trajectory data \cite{kovachki2024data,kim2024bounding,liu2024deep,stepaniants2023learning,de2023convergence,jin2022minimax,mollenhauer2022learning,lu2021learning}, has also received increased attention motivated by recent machine learning techniques to solve partial differential equations, see e.g. \cite{han2018solving,khoo2021solving,raissi2019physics,lifourier}. We emphasize the importance of analyzing the operator version of the covariance estimation problem, since many scientific problems are most naturally formulated in function space. While computations will inevitably need to be carried out with discretizations, designing and analyzing algorithms and estimation procedures in the function space setting often yields scalable and discretization-robust theory and methods \cite{cotter2013mcmc,deuflhard2011newton,trillos2017consistency,sanz2024analysis}.

 Banded structure in operators arises naturally in time series analyses and spatial datasets as a consequence of decay of correlations in time or space. Our theory shows that tapering estimators, akin to popular covariance localization techniques in the geophysical sciences \cite{furrer2007estimation,houtekamer2016review} that often rely on the Gaspari-Cohn tapering function \cite{gaspari1999construction}, are minimax optimal. In addition, tapering estimators are computationally appealing, as they only estimate the kernel around its diagonal, thus reducing computational and memory costs. $L^q$-sparsity structure in operators is a weaker requirement than bandedness, and, not surprisingly, $L^q$-sparse operators are more challenging to estimate.
 While tapering estimators fail in general over the class of $L^q$-sparse operators, we show that thresholding estimators achieve the minimax rate.   A caveat is that thresholding estimators require to pre-compute the full sample covariance and to threshold it point-wise at higher computational cost.

\subsection{Related work}\label{ssec:relatedwork}
An exhaustive review of the vast literature on covariance estimation is beyond the scope of this work, and we refer to \cite{cai2016estimating} for a survey article.
Here, we provide a focused overview of high-dimensional structural assumptions that inspire our infinite-dimensional theory, and a brief summary of existing results in infinite dimension.   
\subsubsection{Covariance matrix estimation: Banded assumption}
The seminal work \cite{bickel2008regularized} considered covariance estimation over the class of banded matrices
\begin{align}\label{eq:bandedclassfinitedimension}
\begin{split}
    \mcB(p, \alpha) = \Biggl\{
        \Sigma \in \mcS_+^p: 
            \| \Sigma \| \le M_0, 
            ~ \max_{i \le p} \sum_{\{j : |i-j| > m\}} |\Sigma_{ij}| \le M m^{- \alpha}, \, \forall \ m \in \N \Biggr\},
\end{split}            
\end{align}
where $\mcS_+^p$ denotes the set of $p\times p$ symmetric positive-definite matrices, and $\| \cdot \|$ denotes the operator norm. The banded estimator studied in \cite{bickel2008regularized} was shown to be suboptimal in \cite{cai2010optimal}, which proposed the tapering estimator $ {\hatSigma}_\kappa:= \hatSigma \circ T_\kappa,$ where $\hatSigma$ is the sample covariance, $\circ$ denotes the Schur product,  $\kappa >0$ is the tapering radius,  and $T_\kappa = (t_{ij})_{1 \le i, j \le p} $ is the tapering matrix with entries
\begin{align}\label{eq:taperingmatrix}
\begin{split}
    t_{ij}
    &=   \frac{(2\kappa-|i-j|)_+ - (\kappa - |i-j|)_+}{\kappa} 
    = \begin{cases}
        1 \quad & \text{if} \quad |i-j| \le \kappa,\\
        2 - \frac{|i-j|}{\kappa} \quad  &\text{if} \quad \kappa < |i-j| \le 2\kappa,\\
        0 \quad  &\text{if} \quad |i-j|> 2\kappa,
    \end{cases}
\end{split}    
\end{align} 
where $(x)_+ := \max \{x, 0 \}$. The paper \cite{cai2010optimal} showed that if the sample size $N$ satisfies $\frac{\log p}{N}\lesssim 1$ and $\kappa = \min \bigl\{ \bigl\lceil N^{\frac{1}{2 \alpha + 1}} \bigr\rceil, p \bigr\}$, then with high probability it holds that 
    \begin{equation}\label{eq:bandederrorfinited}
        \| \hatSigma_\kappa  - \Sigma \|^2 \lesssim \min \left \{ 
            N^{-\frac{2 \alpha}{2 \alpha + 1}} + \frac{\log p}{N}, \frac{p}{N}
        \right \}.
    \end{equation}
A matching minimax lower bound was proved using Assouad's lemma and Le Cam's method.

\subsubsection{Covariance matrix estimation: Sparsity assumption}
The banded class \eqref{eq:bandedclassfinitedimension} presupposes a natural ordering of the variables, so that entries $\Sigma_{ij}$ are small whenever $|i-j|$ is large. The approximate $\ell^q$-sparsity class $(0\le q\le 1)$  \begin{align}\label{eq:SparseMatrixClass}
     \mcU(p,R_q) = 
     \Biggl\{
     \Sigma \in \mcS_+^p: 
     \max_{i \le p} \Sigma_{ii} \le M, ~ \max_{i \le p} \sum_{j=1}^p |\Sigma_{ij}|^q \le R_q^q\Biggr\}
\end{align}
dispenses of the requirement that variables be ordered, while retaining the model assumption of row-wise approximate sparsity. In the class $\mcU(p,R_q),$ banded and tapering estimators perform poorly in general, and thresholding estimators, as  introduced in \cite{bickel2008covariance} and further studied in \cite{rothman2009generalized, cai2012optimal, cai2011adaptive,cai2016estimating}, are favored. The idea is to threshold the entries of the sample covariance that are below a pre-specified value $\rho$, i.e. set\footnote{We  abuse notation and denote tapering and thresholding estimators by $\hatSigma_\kappa$ and $\hatSigma_\rho.$ This will cause no confusion since we consistently denote by $\kappa$ and $\rho$ tapering and thresholding radii, respectively.} $ \hatSigma_{\rho} = \bigl(\hatSigma_{ij} \indicator \{|\hatSigma_{ij}| \ge \rho\}\bigr)_{1 \le i, j \le p}$. 
The paper \cite{bickel2008covariance} showed that if $\frac{\log p}{N}=o(1)$ and $\rho=M^{\prime}\sqrt{\frac{\log p}{N}}$ for sufficiently large $M^{\prime}$, then
\begin{equation}\label{eq:thresholderrorfinited}
\|\hatSigma_{\rho}-\Sigma\|=O_{\mathbb{P}}\bigg(R_q^q \Bigl(\frac{\log p}{N}\Bigr)^{(1-q)/2}\bigg).
\end{equation}
As shown in \cite[Theorem 6.27]{wainwright2019high}, the choice of thresholding level $\rho\asymp \sqrt{\frac{\log p}{N}}$ ensures an entry-wise control on the deviation of the sample covariance matrix from its expectation with high probability. Further, \cite{cai2012optimal} established the optimality of thresholding estimators by deriving a sharp minimax lower bound through a general ``two-directional'' technique.

\subsubsection{Covariance operator estimation: Unstructured setting}
Let $u: D \to \R$ be a centered  square-integrable Gaussian process on a bounded domain $D \subset \R^d$. 
The covariance function (kernel) $k:D\times D \to \R$ and operator $\CC: L^2(D) \to L^2(D)$ of $u$ satisfy that, for any $x,y \in D$ and $\psi \in L^2(D)$, 
\begin{align*}
    k(x,y) := \E \bigl[u(x)u(y)\bigr], 
    \qquad
    (\CC\psi)(\cdot) := \int_D k(\cdot, y)\psi(y) \, dy.
\end{align*}
Given data $\{ u_n\}_{n=1}^N$ comprising $N$ independent copies of $u,$
the sample covariance function $\hatk$ and sample covariance operator $\widehat{\CC}$ are defined  by 
\begin{align*}
    \hatk(x,y) := \frac{1}{N} \sum_{n=1}^N u_n(x) u_n(y),
    \qquad 
    (\widehat{\CC}\psi)(\cdot) := \int_D \hatk(\cdot, y)\psi(y) \, dy,
\end{align*}
where here and throughout this paper we will tacitly assume that point-wise evaluations of $u$ are almost surely well-defined Lebesgue almost-everywhere. 
The work \cite{koltchinskii2017concentration} shows that, for any sample size $N,$
    \begin{align}\label{eq:Koltchinksiibound}
         \frac{\mathbb{E} \| \widehat{\mathcal{C}} - \mathcal{C} \|}{\|\mathcal{C} \| }  \asymp   \sqrt{\frac{r(\mathcal{C})}{N} } \lor  \frac{r(\mathcal{C})}{N}, 
         \qquad  
         r(\mathcal{C}) := \frac{\mathrm{Tr}(\mathcal{C})}{\| \mathcal{C}\|},
    \end{align}
where $r(\CC)$ is called the \emph{effective dimension}, and $\mathrm{Tr}(\mathcal{C})$ denotes the trace of $\CC$. It is also known that the term $\sqrt{r(\CC)/N}$ appears in the minimax lower bound for covariance matrix estimation \cite[Theorem 2]{lounici2014high}. 
More recently, \cite[Theorem 2.3]{han2022exact} establishes an exact upper bound with optimal constants for the relative error
in \eqref{eq:Koltchinksiibound} using Gaussian comparison techniques.

\subsubsection{Covariance operator estimation: Banded and sparsity assumptions}
To our knowledge, this paper establishes the first upper and lower bounds for estimation of banded covariance operators. An upper bound for thresholding estimators under $L^q$-sparsity was established in \cite[Theorem 2.2]{al2025covariancea}; in this paper, we establish the minimax lower bound.  

\subsubsection{Other related topics}\label{ssec:other} 
  In this work, we assume access to full sample paths from the process, in contrast to the partially observed setting in functional data analysis, where paths are noisily observed at finitely many locations. With the notable exception of \cite{mohammadi2024functional}, it is standard in the partially observed setting to first reconstruct the paths using nonparametric smoothers \cite{ramsay2002applied, yao2005functional, zhang2016sparse}, which require additional regularity assumptions that affect the final bound. 
 Our bounds rely instead on fundamental dimension-free quantities that capture the complexity of the underlying process. We refer to \cite[Remark 2.5]{al2025covarianceb} for further discussion.  Another line of research that dates back to the celebrated Marchenko–Pastur law \cite{marchenko1967distribution} has focused on the spectral properties of the sample covariance \cite{johnstone2009consistency,ledoit2011eigenvectors,bloemendal2016principal} and related estimation techniques, e.g. eigenvalue shrinkage estimators \cite{stein1986lectures,ledoit2012nonlinear,donoho2018optimal,bun2017cleaning}. More recently, there is a growing literature in estimating smooth functionals of covariance operators \cite{koltchinskii2021asymptotically,koltchinskii2018asymptotic,koltchinskii2021estimation,koltchinskii2024estimation}. Further related topics and open directions will be discussed in the conclusions section.

\subsection{Main contributions and outline}
\begin{itemize}
    \item Section \ref{sec:banded} investigates estimation of banded covariance operators. The main results, Theorems \ref{thm:bandable_upper_bound} and \ref{thm:bandable}, show an upper bound for tapering estimators and a matching minimax lower bound.  The proof of the upper bound in Theorem \ref{thm:bandable_upper_bound} requires novel ideas to address new challenges that emerge in infinite dimension.  To prove Theorem \ref{thm:bandable}, we introduce in Proposition \ref{prop:reduction} a lower bound reduction framework to lift theory from high to infinite dimension. We further discuss the dimension-free quantities that determine the sample complexity,  relating them to the correlation lengthscale in Corollary \ref{cor:SmallLengthscaleBanded}.
    \item Section \ref{sec:sparse} considers estimation of $L^q$-sparse operators mirroring the presentation in Section \ref{sec:banded}. The main result, Theorem \ref{thm:approximate sparsity lower bound}, shows a minimax lower bound that matches an existing upper bound for thresholding estimators, established in \cite{al2025covariancea} and reviewed in Theorem \ref{thm:sparse_upper_bound}. The proof of Theorem \ref{thm:approximate sparsity lower bound} builds again on our lower bound reduction framework. We discuss the key dimension-free quantities that determine the sample complexity,  relating them to the correlation lengthscale in Corollary \ref{corr:LengthscaleApproxSparse}. 
    \item Section \ref{sec:Numerics} compares the numerical performance of tapering and thresholding estimators in problems with ordered and unordered sparsity patterns. 
    \item  Section \ref{sec:conclusions} closes with open questions that stem from this work. 
\end{itemize}

\subsection{Notions of dimension}\label{ssec:dimension}
In this paper, we study the estimation of the covariance operator $\mathcal{C}: L^2(D) \to L^2(D)$ of a Gaussian process $u:D \to \R$ defined on the domain $D = [0,1]^d$ based on $N$ independent copies $u_1, \ldots, u_N.$ It is important to clearly differentiate between the dimension of the parameter space, the dimension $d$ of the domain $D$ where the process $u$ is defined, and the dimension of the data. We use the following conventions and terminology:
\begin{itemize}
    \item \emph{Parameter space dimension:} 
    We will consider infinite-dimensional parameter spaces of banded and $L^q$-sparse operators on $L^2(D)$.  
    Our bounds are \emph{dimension-free} in that they do not depend on the dimension of the parameter space ---which is infinite--- but only on fundamental finite quantities such as the \emph{effective dimension} $r(\CC) = \mathrm{Tr}(\CC)/\|\CC\|$ of the true covariance operator, the off-diagonal decay of its kernel, and the expected supremum of the process $u$. Dimension-free bounds are essential to obtain \emph{discretization-independent} control of the estimation error; see e.g. \cite{sanz2024analysis}.  
    \item \emph{Physical dimension:} We use this term to refer to the dimension $d$ of the domain $D = [0,1]^d$ where the Gaussian process $u: D \to \R$ is defined. Since in most scientific applications the physical dimension is small (typically $d\leq 3)$, we treat it as a constant in our analysis. We expect, however, our bounds to worsen in the large $d$ asymptotic.
    \item \emph{Data dimension:} In this work, the data $u_1, \ldots, u_N$ represent $N$ independent copies of a Gaussian process. Each data point belongs to the infinite-dimensional space of square-integrable functions on $D=[0,1]^d$. Subsection \ref{ssec:other} compares this setting with the partially observed setting in functional data analysis, where the data are finite-dimensional vectors. 
\end{itemize}


\subsection{Notation}  
For positive sequences $\{a_n\}, \{b_n\}$, we write $a_n \lesssim b_n$ to denote that, for some constant $c >0$, $a_n \le c b_n.$ If both $a_n \lesssim b_n$ and $b_n \lesssim a_n$ hold, we write $a_n \asymp b_n.$  
For an operator $\CC,$ we denote its trace by $\text{Tr}(\CC)$ and its operator norm by $\|\CC\|.$ 
For a vector $x\in \R^d$, we use $\|x\|$ to denote its Euclidean norm.

\section{Estimating banded covariance operators} \label{sec:banded}

\subsection{Upper bound}
 Motivated by the covariance matrix class \eqref{eq:bandedclassfinitedimension}, we work under the following assumption. Recall that $r(\CC) = \mathrm{Tr}(\CC)/\|\CC\|$ denotes the effective dimension.
\begin{assumption}\label{assumption:main1}
The data $\{u_n\}_{n=1}^N$ consists of $N$ independent copies of a real-valued, centered, square-integrable Gaussian process $u$ on $D=[0,1]^d$ with covariance function $k:D \times D \to \R$ and trace-class covariance operator $\CC: L^2(D) \to L^2(D),$ denoted $u \sim \mathrm{GP}(0,\CC).$  Moreover:
    \begin{enumerate}[label=(\roman*)]
        \item $\sup_{x\in D} k(x,x)\lesssim \mathrm{Tr}(\CC)$. \label{assumption:magnitude}
        \item There exists a positive and decreasing sequence $\{\nu_m\}_{m=1}^{\infty}$ with $\nu_1=1$ such that 
        \begin{equation}
        \sup_{x\in D} \int_{\{y: \|x-y\| > m r(\CC)^{-1/d}\}} |k(x,y)| dy \lesssim   \|\CC\|\nu_m \nc, \qquad \forall \ m \in \N. \tag*{\qed}
        \end{equation}\label{assumption:sparsity}
    \end{enumerate}
\end{assumption}

\begin{remark}
Notice that it always holds that $\mathrm{Tr}(\CC) \le \sup_{x\in D}k(x,x),$ since
\[
\mathrm{Tr}(\CC)= \E_{u \sim \mathrm{GP}(0,\CC)} \|u\|_{L^2(D)}^2  = 
\int_{D}k(x,x)dx\le \Big(\sup_{x\in D}k(x,x)\Big)\mathrm{Vol}(D)=\sup_{x\in D}k(x,x).
\]
 Assumption \ref{assumption:main1} (i) is satisfied when the reverse inequality also holds, in which case the marginal variance varies moderately across the domain $D$. Our analysis will show that this requirement plays a crucial role in linking global covariance estimation to local estimates.
 The condition in  Assumption \ref{assumption:main1} (ii) resembles that in the finite-dimensional counterpart \eqref{eq:bandedclassfinitedimension}, which restricts attention to the case $\nu_m = m^{-\alpha}$. As discussed in Section \ref{subsection:lengthscale-banded}, the quantity $r(\CC)^{-1/d}$ in our assumption can be viewed as a natural \emph{lengthscale} in the interesting regime where $r(\CC)\gg 1.$ In that setting, the sequence $\{\nu_m\}$ controls the tail decay of the covariance function. 
\end{remark}

For a tunable parameter $\kappa>0,$ we define the tapering estimator as
\[
\hat{k}_{\kappa}(x,y):=\hat{k}(x,y)f_{\kappa}(x,y),\qquad (\hat{\CC}_{\kappa}\psi)(\cdot) := \int_D \hat{k}_{\kappa}(\cdot, y)\psi(y) \, dy, \ \ \psi\in L^2(D), 
\]
where the tapering function $f_\kappa$ is defined as
\begin{align}\label{eq:tapering_func}
f_{\kappa}(x,y):=\prod_{i=1}^{d}\min\left\{\frac{(2\kappa-|x_i-y_i|)_{+}}{\kappa},1\right\}.
\end{align}

\begin{remark}
    When $d=1$, the tapering function in \eqref{eq:tapering_func} becomes 
    \[
    f_{\kappa}(x,y)=\min\left\{\frac{(2\kappa-|x-y|)_{+}}{\kappa},1\right\},
    \]
    which is identical to the tapering function used in the matrix setting \cite{cai2010optimal}. Therefore, $f_{\kappa}(x,y)$ in \eqref{eq:tapering_func} can be seen as a generalization of the tapering function in \cite{cai2010optimal} to high physical dimension $d > 1$.  
\end{remark}

We next define two important quantities, $m_*$ and $\varepsilon_*,$ that will be respectively used to specify the tapering radius $\kappa$ and to control the estimation error.
\begin{definition}\label{def:1}
Define the pair $m_{*}=m_*(\{\nu_m\},N,d)$ and $\varepsilon_*=\varepsilon_*(\{\nu_m\},N,d)$ as
\begin{align*}
    m_* := \min \bigg\{ m \in \N: \nu_{m} \le \sqrt{\frac{m^d}{N}} \bigg\},
    \qquad 
    \varepsilon_*:=\max_{m\in \N} \bigg\{ \nu_m \land \sqrt{\frac{m^d}{N}}\bigg\}.
\end{align*}
\end{definition}

The quantity $m_*$ is essentially the solution to the maximization problem in the
definition of $\varepsilon_*$, which can be conceptualized as the correct truncation order. This relationship is formalized in Lemma~\ref{lemma:basic_relation}. Similar quantities arise in various statistical problems, e.g. estimation and testing in sequence models \cite{massart2001gaussian, baraud2002non}. In nonparametric regression and density estimation, the rate for Sobolev spaces with smoothness $\alpha$ is $\varepsilon_*\asymp N^{-\frac{\alpha}{2\alpha+d}}$ since $\nu_m\asymp m^{-\alpha}$, corresponding to the decay rate of the function's coefficients when decomposed along some orthonormal basis.

Notice that if $m_*=1$, then $N=1$ by Definition \ref{def:1}, which is the trivial case. For simplicity, we assume $m_*\ge 2$ throughout the paper. 
We are now ready to state our first main result, which provides an upper bound on the estimation error. 

\begin{theorem}\label{thm:bandable_upper_bound}
Suppose that Assumption \ref{assumption:main1} holds and set $\kappa:=m_{*} r(\CC)^{-1/d}.$ Then,
\[
\frac{\E \|\hat{\CC}_{\kappa}-\CC\|}{\|\CC\|}\lesssim \varepsilon_* +\biggl(\sqrt{\frac{\log r(\CC)}{N}}\lor \frac{\log r(\CC)}{N}\biggr).
\]
\end{theorem}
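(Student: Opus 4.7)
The plan is to split $\hat{\CC}_\kappa - \CC = (\hat{\CC}_\kappa - \CC_\kappa) + (\CC_\kappa - \CC)$, where $\CC_\kappa$ is the deterministic operator with kernel $k\,f_\kappa$, and bound the two terms separately. For the bias, the choice $\kappa = m_* r(\CC)^{-1/d}$ ensures $1-f_\kappa$ is supported on $\{\|x-y\|_\infty > \kappa\}$, so Schur's test applied to the symmetric kernel $k(1-f_\kappa)$ gives
\begin{equation*}
\|\CC - \CC_\kappa\| \;\le\; \sup_{x\in D}\int_{\{\|x-y\|_\infty>\kappa\}}|k(x,y)|\,dy \;\lesssim\; \|\CC\|\,\nu_{m_*} \;\le\; \|\CC\|\,\varepsilon_*,
\end{equation*}
using $\|x-y\|\ge\|x-y\|_\infty$, Assumption \ref{assumption:main1}(ii) with $m=m_*$, and Lemma \ref{lemma:basic_relation}(B).

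For the stochastic term I would first combine Lemma \ref{lemma:f_kappa} with the elementary identity $(a-|z|)_+=\int_0^a \indicator\{|z|\le t\}\,dt$ to represent $f_\kappa$ as an integral of product-indicator kernels $(x,y)\mapsto\prod_{i=1}^d\indicator\{|x_i-y_i|\le s_i\}$ against a signed measure on $[0,2\kappa]^d$ of total variation bounded by a $d$-dependent constant. Passing the operator norm through this integral reduces the task to uniformly bounding $\|B_s\|$ for $s\in[0,2\kappa]^d$, where $B_s$ has kernel $(\hat{k}-k)\prod_{i=1}^d \indicator\{|x_i-y_i|\le s_i\}$. Since $B_s$ is supported on $\|x-y\|_\infty\le 2\kappa$, partitioning $D$ into cubes $\{Q_\alpha\}$ of side $\kappa$ and grouping cube pairs by their relative shift $\delta\in\{-2,\ldots,2\}^d$ splits $B_s$ into at most $5^d$ operators, each a direct sum over disjoint blocks $Q_\alpha\times Q_{\alpha+\delta}$, whose operator norms equal the maxima of the individual block norms $\|B_s^{(\delta,\alpha)}\|$. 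Each block norm is in turn to be controlled by the fully-restricted local error $\|(\hat{\CC}-\CC)|_{Q_\alpha^*}\|$, where $Q_\alpha^*$ is a cube of side $\lesssim\kappa$ containing both $Q_\alpha$ and $Q_{\alpha+\delta}$.

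With this reduction in hand, I would invoke the Koltchinskii--Lounici bound \eqref{eq:Koltchinksiibound} on each local operator. Assumption \ref{assumption:main1}(i) gives $\mathrm{Tr}(\CC|_{Q_\alpha^*})\le \sup_x k(x,x)\cdot|Q_\alpha^*|\lesssim r(\CC)\|\CC\|\cdot\kappa^d = m_*^d\|\CC\|$, so the leading local bound is of order $\sqrt{\|\CC|_{Q_\alpha^*}\|\cdot m_*^d\|\CC\|/N}\le\|\CC\|\sqrt{m_*^d/N}\asymp\|\CC\|\,\varepsilon_*$ by Lemma \ref{lemma:basic_relation}(C). The maximum over the $M\lesssim r(\CC)/m_*^d\le r(\CC)$ cubes is then controlled by the sub-Gaussian tail version of \eqref{eq:Koltchinksiibound}, a union bound, and tail integration, contributing the additional term $\|\CC\|\bigl(\sqrt{\log r(\CC)/N}\lor\log r(\CC)/N\bigr)$. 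Summing bias and stochastic contributions yields the claimed rate.

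I expect the main obstacle to be the block-level inequality $\|B_s^{(\delta,\alpha)}\|\lesssim\|(\hat{\CC}-\CC)|_{Q_\alpha^*}\|$ for off-diagonal shifts $\delta\ne 0$. The kernel of $B_s^{(\delta,\alpha)}$ equals that of the local error multiplied by the product-indicator mask, and pointwise masking of a kernel by a $[0,1]$-valued function does not in general preserve operator-norm bounds, so naive domination fails. This is precisely the subtlety flagged in the introduction: in dimensions $d>1$, the operator-level bandedness does not translate to matrix-level bandedness of any grid discretization, preventing the Cai--Zhang--Zhou matrix argument from being invoked as a black box. Overcoming this obstruction requires working directly at the operator level and exploiting the product structure of the mask $\prod_i \indicator\{|x_i-y_i|\le s_i\}$ together with the Gaussian law of $\hat{k}-k$ to derive the block-level bound.
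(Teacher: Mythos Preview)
Your bias bound is correct and matches the paper's. The gap you identify in the stochastic term is genuine and you do not close it: the Schur product of a kernel by a $[0,1]$-valued mask such as $\prod_i \indicator\{|x_i-y_i|\le s_i\}$ need not be an operator-norm contraction, so the inequality $\|B_s^{(\delta,\alpha)}\|\lesssim\|(\hat{\CC}-\CC)|_{Q_\alpha^*}\|$ has no obvious proof, and vague appeals to the product structure and Gaussianity do not supply one.

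The paper avoids this obstacle entirely by using a \emph{different} integral representation of the same quantity. Instead of your identity $(a-|z|)_+=\int_0^a \indicator\{|z|\le t\}\,dt$, which produces bandwidth masks $\indicator\{|x_i-y_i|\le t_i\}$, the paper uses
\[
(\sigma_i\kappa - |x_i-y_i|)_+ \;=\; \int \indicator\bigl\{x_i,\,y_i \in [\theta_i-\tfrac{\sigma_i\kappa}{2},\,\theta_i+\tfrac{\sigma_i\kappa}{2}]\bigr\}\,d\theta_i,
\]
i.e.\ the overlap length of two intervals of half-width $\sigma_i\kappa/2$ centered at $x_i$ and $y_i$. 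Taking the product over $i$ writes $f_\kappa$ (up to the signed sum over $\sigma$ from Lemma~\ref{lemma:f_kappa}) as $\kappa^{-d}\int \indicator\{x,y\in T(\theta)\}\,d\theta$ for boxes $T(\theta)$ of side $\lesssim\kappa$. The crucial difference is that $\indicator\{x,y\in T(\theta)\}$ is a \emph{diagonal} restriction: the corresponding operator is exactly $P_{T(\theta)}\hat{\CC}P_{T(\theta)}=\hat{\CC}_{T(\theta)}$, whose norm is trivially dominated by that of the full operator and to which Koltchinskii--Lounici applies directly. No off-diagonal blocks or Schur-multiplier bounds are needed.

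What remains is to pass from the continuous mixture $\kappa^{-d}\int(\hat{\CC}_{T(\theta)}-\CC_{T(\theta)})\,d\theta$ to a maximum over $\asymp\kappa^{-d}$ disjoint boxes, so that Lemma~\ref{lemma:disjoint} converts the sum into a max and a union bound over $\lesssim r(\CC)$ local problems produces the $\sqrt{\log r(\CC)/N}$ term. The paper does this coordinate-by-coordinate via a change of measure (Lemma~\ref{lemma:Q_lowerbound}): replace the uniform distribution of each $\theta_i$ by a symmetric joint law on $S\asymp\kappa^{-1}$ points that are pairwise $>\sigma_i\kappa$ apart, whose marginal density is bounded below. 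This is the step your cube-partition-and-shift argument was trying to achieve, but it works cleanly precisely because the integrand $\hat{\CC}_{T(\theta)}-\CC_{T(\theta)}$ is already a diagonal restriction.
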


\begin{example}
    If $\nu_m=m^{-\alpha}$, then $m_{*}\asymp N^{\frac{1}{2\alpha+d}}$ and $\varepsilon_*\asymp N^{-\frac{\alpha}{2\alpha+d}}$; if $\nu_m=e^{-m^t}$, then $m_*\asymp (\log N)^{1/t}$ and $\varepsilon_*\asymp \sqrt{(\log N)^{d/t} / N} $. 
\end{example}

 To our knowledge, Theorem \ref{thm:bandable_upper_bound} gives the first upper bound in the literature for tapering estimators under the natural bandedness Assumption \ref{assumption:main1}. 
We establish a matching minimax lower bound in  Theorem~\ref{thm:bandable}, thus showing that tapering estimators attain the minimax optimal rate.

The upper bound in Theorem~\ref{thm:bandable_upper_bound} is analogous to the matrix result in \cite{cai2010optimal}; see also  \eqref{eq:bandederrorfinited}. 
In particular, our choice of tapering radius $\kappa=m_{*} r(\CC)^{-1/d}$  depends on $\{\nu_m\}$ and on $r(\CC);$ this is  analogous to the dependence of $\kappa$ on $\alpha$ and $p$ in \cite{cai2010optimal}. 
However, several key differences arise in our setting. First, our bound is \emph{dimension-free} in the sense of Subsection \ref{ssec:dimension}, and is stated in terms of \emph{relative error}. Second, discretizing a covariance operator that satisfies Assumption \ref{assumption:main1} yields a banded covariance matrix only in physical dimension $d=1.$ As a result, the analysis of tapering estimators for banded covariance matrices in \cite{cai2010optimal} does not directly extend to our setting.  

 The proof of Theorem~\ref{thm:bandable_upper_bound} contains several novel ideas that we now outline. To streamline the flow of the paper, we defer the full proof to  Appendix~\ref{app:upperbandable}.
 The argument begins with a bias-variance trade-off analysis. The main technical challenge lies in the variance analysis: specifically, reducing global estimates of the variance to \emph{dimension-free} local estimates. A crucial observation is the following representation of the tapering function:
\[
f_{\kappa}(x,y)=\kappa^{-d}\sum_{\sigma\in \{1,2\}^d}(-1)^{\sum_{i=1}^{d}\sigma_i}\prod_{i=1}^{d} (\sigma_i\kappa-|x_i-y_i|)_{+},
\]
shown in Lemma \ref{lemma:f_kappa}. This expression reveals that the tapering function is a sum of separable product terms of the form $\prod_{i=1}^{d} (\sigma_i\kappa-|x_i-y_i|)_{+}$. Moreover, each such product can be expressed as a mixture of indicator functions:
\begin{align*}
\prod_{i=1}^{d} (\sigma_i\kappa-|x_i-y_i|)_{+}=\int_{\bar{D}}\mathbf{1}\bigl\{x,y\in T(\theta) \bigr\} \, d\theta,
\end{align*}
where $T(\theta):=\bigotimes_{i=1}^{d}[\theta_i-\frac{\sigma_i\kappa}{2},\theta_i+\frac{\sigma_i\kappa}{2}]$ is a small box centered at $\theta$, and $\bar{D}$ denotes a slightly enlarged version of the domain $D=[0,1]^d$. This representation enables a reduction from global to local variance analysis.
 To transition from the continuous uniform mixture distribution over $\theta$ to 
 disjoint local boxes, we introduce a measure $Q$ under which the centers $\theta$ are well separated (see Lemma \ref{lemma:Q_lowerbound}). We then apply a \emph{change of measure} argument to complete the reduction step. For the analysis of the local variance, we apply dimension-free concentration bounds for sample covariance operators from \cite{koltchinskii2017concentration}.  To lift the resulting local estimates to local bounds we use Lemma~\ref{lemma:disjoint}, which states that the operator norm of a sum of local covariance operators with disjoint supports equals the maximum of their individual operator norms. Finally, the parameter $\kappa$ is chosen to optimally balance the bias and variance terms.

\begin{remark}
        In the covariance matrix literature, \cite{cai2012adaptive,cai2016minimax} introduced a block-thresholding approach that adapts to $\alpha.$ In our infinite-dimensional setting, adaptation to $\alpha,$ or, more generally, to the decay rate of the sequence $\{\nu_m\},$ is an interesting direction for future work. The papers \cite[Lemma 2.1]{mendelson2020robust} (see also \cite[Lemma B.8]{al2024ensemble}) study estimation of $r(\CC)^{-1/d}$ up to a multiplicative constant. 
\end{remark}

\subsection{Lower bound}\label{ssec:lowerboundbanded}

In this subsection, we formulate and prove a matching lower bound for the estimation of banded covariance operators. Consider, as in Assumption \ref{assumption:main1}, the banded class
\begin{align}\label{eq:continuous banded class}
\F(r,\{\nu_m\})&:=\bigg\{\CC: \sup_{x\in D}k(x,x)\lesssim \mathrm{Tr}(\CC), r(\CC)\le r, \nonumber\\ 
&\qquad \qquad \sup_{x\in D} \int_{\{y: \|x-y\| > m r(\CC)^{-1/d}\}} |k(x,y)| dy \lesssim   \|\CC\| \nu_m, \forall \, m \in \N \bigg\}.
\end{align}
The following theorem establishes a minimax lower bound over this class.

\begin{theorem}\label{thm:bandable}
Suppose $N > \log r>0$. The minimax risk for estimating the covariance operator over $\F(r,\{\nu_m\})$ under the operator norm satisfies
\[
\inf_{\hat{\CC}}\sup_{\CC\in \F(r,\{\nu_m\})}\frac{\mathbb{E} \| \widehat{\mathcal{C}} - \mathcal{C} \|}{\|\mathcal{C} \| } \gtrsim \min\left\{ \varepsilon_*+\sqrt{\frac{\log r}{N}},\sqrt{\frac{r}{N}}\right\}. \]
\end{theorem}

The next proposition provides a general technique to reduce the \textit{infinite-dimensional} operator estimation problem to a \textit{finite-dimensional} matrix estimation problem. This proposition, proved in Appendix \ref{app:lowerbandable}, will be used to establish lower bounds for both banded and sparse covariance operators. 

\begin{proposition}[Lower bound reduction]\label{prop:reduction}
Let $I_1,I_2,\ldots, I_M$ be a uniform partition of $D=[0,1]^d$ with $\mathrm{Vol}(I_i)=\frac{1}{M}$. Let $\F\subseteq \R^{M\times M}$ be a subset of positive semi-definite matrices. For every $\Sigma\in \F$, define
\begin{align*}
    k_{\Sigma}(x,y) :=\sum_{i,j=1}^{M} \Sigma_{ij}\mathbf{1}\{x\in I_i\}\mathbf{1}\{y\in I_j\},
    \qquad 
    (\CC_{\Sigma}\psi)(\cdot) := \int_D k_{\Sigma}(\cdot, y)\psi(y) \, dy.
\end{align*}
Then,
\begin{enumerate}[label=(\alph*)]
\item $\CC_{\Sigma}: L^2(D) \to L^2(D)$ is positive semi-definite and trace-class.
\item $\|\CC_{\Sigma}\|=\frac{1}{M}\|\Sigma\|$. \label{eq:reductionb}
\item Let $\F^{*}=\{\CC_{\Sigma}:\Sigma\in \F\}$, $u_1(\cdot),u_2(\cdot),\ldots, u_N(\cdot)\iid \mathrm{GP}(0,\CC),$ and $X_1,X_2, \ldots, X_N\iid \mcN(0,\Sigma)$. 
Then, the following holds:
\begin{align}\label{eq:reduction1}
\inf_{\hat{\CC}}\sup_{\CC\in \F^{*}} \E \|\hat{\CC}-\CC\|\ge \frac{1}{M}\inf_{\hat{\Sigma}}\sup_{\Sigma\in \F} \E \|\hat{\Sigma}-\Sigma\|
\end{align}
and
\begin{align}\label{eq:reduction2}
\inf_{\hat{\CC}}\sup_{\CC\in \F^{*}} \frac{\mathbb{E} \| \widehat{\mathcal{C}} - \mathcal{C} \|}{\|\mathcal{C} \| }\ge \inf_{\hat{\Sigma}}\sup_{\Sigma\in \F} \frac{\mathbb{E} \| \widehat{\Sigma} - \Sigma \|}{\|\Sigma \| },
\end{align}
where $\inf_{\hat{\CC}}$ is taken over kernel
integral operators $\widehat{\CC}$ whose kernel $\hat{k}$ is a measurable function of $\{u_n(\cdot)\}_{n=1}^N$ and $\inf_{\hat{\Sigma}}$ is taken over measurable functions $\hat{\Sigma}$  of $\{X_n\}_{n=1}^N$.
\end{enumerate}
\end{proposition}
 Proposition \ref{prop:reduction} formalizes the intuition that the infinite-dimensional covariance operator estimation problem is at least as hard as the finite-dimensional covariance matrix estimation problem. This proposition facilitates using existing information-theoretic lower bounds for high-dimensional covariance matrix estimation to prove lower bounds for operator estimation with relatively few modifications. In this paper, we illustrate this claim by obtaining lower bounds for banded and sparse covariance operators building on the covariance matrix lower bounds in \cite{cai2010optimal} and \cite{cai2012optimal}, respectively. We expect that this approach could also be used to prove sharp lower bounds for other structured covariance operator estimation problems, provided that the fundamental dimension-free quantities are correctly identified.

\begin{proof}[Proof of Theorem \ref{thm:bandable}]
We will follow the high-dimensional analysis in \cite{cai2010optimal}  to construct three subclasses $\mathcal{F}_i(r, \{\nu_m\}) \subseteq \mathcal{F}(r, \{\nu_m \})$ for $i = 1, 2, 3$, each tailored to a different regime of the parameters $(N, r, m_*)$. For each regime, we use Proposition~\ref{prop:reduction} to verify the inclusion $\mathcal{F}_i \subseteq \mathcal{F}$ and to 
derive a lower bound over $\mathcal{F}_i.$ 
The claimed result follows since the supremum over a subset provides a valid lower bound.

\paragraph{Case 1 ($N > \log r$).}
Suppose that $r > 1$ is an integer (otherwise, replace $r$ with $\lceil r \rceil$), and let $\{I_1, \ldots, I_r\}$ be a uniform partition of $D=[0,1]^d$ such that $\mathrm{Vol}(I_i) = 1/r$ and $\mathrm{diam}(I_i) \asymp r^{-1/d}$. Define
\[
\mathcal{F}_{0}:=\Bigg\{\Sigma_\ell: \Sigma_\ell=w I_r-w\left(\sqrt{\frac{\tau}{N} \log r}\,\, \mathbf{1}\{i=j=\ell\}\right)_{r \times r}, 0 \leq \ell \leq r\Bigg\}\subseteq \R^{r\times r},
\]
where $w > 0$ is a large constant and $\tau > 0$ is a sufficiently small constant, and define
\[
\mathcal{F}_1(r, \{\nu_m\}) := \left\{ \mathcal{C} : k(x,y) = \sum_{i,j=1}^r \Sigma_{ij} \mathbf{1}\{x \in I_i\} \mathbf{1}\{y \in I_j\}, \ \Sigma \in \mathcal{F}_0 \right\}.
\]
Lemmas~\ref{lemma:banding lower bound inclusion 1} and \ref{lemma:lower_bound1} then establish, using Proposition~\ref{prop:reduction}, that $\mathcal{F}_1(r, \{\nu_m\}) \subseteq \mathcal{F}(r, \{\nu_m\})$ and 
\begin{align*}
\inf_{\widehat{\mathcal{C}}} \sup_{\mathcal{C} \in \mathcal{F}_1(r, \{\nu_m\})} \frac{\mathbb{E} \| \widehat{\mathcal{C}} - \mathcal{C} \|}{\|\mathcal{C} \|} &\gtrsim \sqrt{\frac{\log r}{N}}.
\end{align*}

\paragraph{Case 2 ($N > \log r$, $r > m_*^d$).}
Let $S^d = r$, $K = (m_* - 1)/(2\sqrt{d})$, $\gamma_N = K^d$, and $h_N = K^{-d} \sqrt{m_*^d / N}$. For each $\theta = (\theta_1, \theta_2,\dots, \theta_{\gamma_N}) \in \{0,1\}^{\gamma_N}$, define
\[
k_\theta(x,y) := \sum_{\ell=1}^{S^d} \mathbf{1}\{x, y \in I_\ell\} + \tau h_N \sum_{\ell=1}^{\gamma_N} \theta_\ell \left( \mathbf{1}\{x \in I_\ell\} \mathbf{1}\{y \in T_{x,2K}\} + \mathbf{1}\{y \in I_\ell\} \mathbf{1}\{x \in T_{y,2K}\} \right),
\]
where $\tau \in (0, 4^{-d-1})$ is a small constant and the domains $\{I_{\ell}\}_{\ell=1}^{S^d}$ and $\{I_{\ell}\}_{\ell=1}^{\gamma_N}$ are given by
\[
\{I_{\ell}\}_{\ell=1}^{S^d}=\left\{\bigotimes_{\ell=1}^{d} \left[\frac{i_{\ell}}{S},\frac{i_{\ell}+1}{S}\right]: i_{\ell}\in \{0,1,\ldots,S-1\}\right\},
\]
\[
\{I_{\ell}\}_{\ell=1}^{\gamma_N}=\left\{\bigotimes_{\ell=1}^{d} \left[\frac{i_{\ell}}{S},\frac{i_{\ell}+1}{S}\right]: i_{\ell}\in \{0,1,\ldots,K-1\}\right\}.
\]
For $x \in \left[0,\frac{K}{S}\right]^d$, $T_{x,2K}:=\bigotimes_{i=1}^d \left[\frac{1+\lceil S x_i \rceil}{S},\frac{2K}{S}\right]$. Note that the assumption $r>m_*^d$ implies $S>2K$. Since $k_{\theta}(x,y)$ is a constant on each $I_{\ell}$ $(1\le \ell\le S^d)$, it admits the form
\[
k_{\theta}(x,y)=\sum_{\ell,\ell^{\prime}=1}^{S^d}\Sigma^{(\theta)}_{\ell \ell^{\prime}}\mathbf{1}\{x\in I_{\ell}\}\mathbf{1}\{y\in I_{\ell^{\prime}}\}
\]
for some symmetric matrix $\Sigma^{(\theta)}\in \R^{S^d\times S^d}$. We define $\F_{2}(r,\{\nu_m\}):=\{\CC_{\theta}:\theta\in \{0,1\}^{\gamma_N} \}$.
Lemmas~\ref{lemma:banding lower bound inclusion 2} and \ref{lemma:lower_bound2} then establish, using Proposition~\ref{prop:reduction},
 that $\mathcal{F}_2(r, \{\nu_m\}) \subseteq \mathcal{F}(r, \{\nu_m\})$ and
\begin{align*}
\inf_{\widehat{\mathcal{C}}} \sup_{\mathcal{C} \in \mathcal{F}_2(r, \{\nu_m\})} \frac{\mathbb{E} \| \widehat{\mathcal{C}} - \mathcal{C} \|}{\|\mathcal{C} \|} &\gtrsim \varepsilon_*.
\end{align*}

\paragraph{Case 3 ($r < m_*^d$).}
Let $S^d = r$, $\gamma = (S/2)^d = r / 2^d$, and define, for each $\theta \in \{0,1\}^\gamma$ the kernel integral operator $\CC_\theta$ with kernel function 
\[
k_\theta(x,y) := \sum_{\ell=1}^{S^d} \mathbf{1}\{x, y \in I_\ell\} + \tau \frac{1}{\sqrt{N r}} \sum_{\ell=1}^\gamma \theta_\ell \left( \mathbf{1}\{x \in I_\ell\} \mathbf{1}\{y \in T_x\} + \mathbf{1}\{y \in I_\ell\} \mathbf{1}\{x \in T_y\} \right),
\]
for $\tau>0$ a small constant. The domains $\{I_{\ell}\}_{\ell=1}^{S^d}$ are defined as in Case 2, and the domains $\{I_{\ell}\}_{\ell=1}^{\gamma}$ are given by 
\[
\{I_{\ell}\}_{\ell=1}^{\gamma}=\left\{\bigotimes_{\ell=1}^{d} \left[\frac{i_{\ell}}{S},\frac{i_{\ell}+1}{S}\right]: i_{\ell}\in \Big\{0,1,\ldots,\frac{S}{2}-1\Big\}\right\}.
\]
For $x \in \left[0,\frac{1}{2}\right]^d$, set $T_{x}:=\bigotimes_{i=1}^d \left[\frac{1+\lceil S x_i \rceil}{S},1\right]$. We define $\F_{3}(r,\{\nu_m\}):=\{\CC_{\theta}:\theta\in \{0,1\}^{\gamma} \}$. An argument analogous to that for $\mathcal{F}_2$ implies that $\mathcal{F}_3(r, \{\nu_m\}) \subseteq \mathcal{F}(r, \{\nu_m\})$ and Lemma~\ref{lemma:lower_bound3} shows that 
\begin{align*}
\inf_{\widehat{\mathcal{C}}} \sup_{\mathcal{C} \in \mathcal{F}_3(r, \{\nu_m\})} \frac{\mathbb{E} \| \widehat{\mathcal{C}} - \mathcal{C} \|}{\|\mathcal{C} \|} &\gtrsim \sqrt{\frac{r}{N}}.
\end{align*}
\end{proof}

\subsection{Lengthscale, effective dimension, and kernel decay}\label{subsection:lengthscale-banded}
Our upper bound in Theorem \ref{thm:bandable_upper_bound} utilizes the tapering parameter $\kappa = m_* r(\CC)^{-1/d},$ while our matching lower bound in Theorem \ref{thm:bandable} demonstrates the fundamental role that 
the effective dimension $r(\CC)$ and the decay sequence $\{\nu_m\}$ play in the estimation problem. In this section, we provide further intuition on the key quantities $r(\CC)$ and $\{\nu_m\}$ by relating them to the correlation lengthscale and to the tail decay of the kernel. First, we formalize the notion of correlation lengthscale by means of the following assumption, which, while restrictive, is often invoked in applications \cite{stein2012interpolation, williams2006gaussian}.

\begin{assumption}\label{assumption:isotropicandlengthscale}
    The kernel $k:D \times D \to \R$ satisfies:
    \begin{enumerate}[label=(\roman*)]
        \item $k= k_\lambda$ depends on a correlation lengthscale parameter $\lambda > 0,$ so that $k_\lambda(x,y) = \mathsf{K}(\|x-y\|/\lambda)$ for an isotropic base kernel $\mathsf{k}: \R^d \times \R^d \to \R$ with $\mathsf{k}(x,y) = \mathsf{K}(\|x-y\|).$ 
       \item The base kernel $\mathsf{k}$ is positive, so that $\mathsf{k}(x,y) = \mathsf{K}(\|x-y\|)>0$. Further, $\mathsf{K}(r)$ is differentiable, strictly decreasing on $[0,\infty)$, and satisfies $\lim_{r \to \infty} \mathsf{K}(r) = 0$. 
    \end{enumerate}
\end{assumption}

\begin{example}
Many widely used families of kernels are parameterized by a lengthscale parameter, including the squared exponential $k^{\text{SE}}$ and Mat\'ern $k^{\text{Ma}}$ covariance models \cite{williams2006gaussian,stein2012interpolation}
\begin{align}\label{eq:SEandMaterncovariance}
k^{\text{SE}}_\lambda(x, y) &:=\exp \left(-\frac{\|x-y\|^2}{2\lambda^2}\right), \\
k^{\text{Ma}}_\lambda(x, y) &:=\frac{2^{1-\nu}}{\Gamma(\zeta)}\left(\frac{\sqrt{2\zeta}}{\lambda}\|x-y\|\right)^\zeta K_{\zeta}\left(\frac{\sqrt{2\zeta}}{\lambda}\|x-y\|\right), 
\qquad \zeta >\frac{d-1}{2}\lor \frac{1}{2}.
\end{align}
 Here, $\Gamma$ denotes the Gamma function, $K_\zeta$ denotes the modified Bessel function of the second kind, and $\zeta$ controls the smoothness of sample paths in the Matérn model. In these and other examples, the lengthscale $\lambda$ parameterizes the covariance function, and can be heuristically interpreted as the largest distance in physical space at which correlations are significant. Lengthscale parameters are also used to define covariance operators directly. For instance, in the stochastic partial differential equation (SPDE) approach \cite{lindgren2011explicit}, Matérn-type Gaussian processes are defined by, see e.g. equation (2.4) in \cite{sanz2022spde},
\begin{equation}\label{eq:SPDE}
u\sim \mathrm{GP}(0,\CC),\quad \CC=\tau^{2s-d}(\tau^2 I-\mathcal{L})^{-s},
\end{equation}
where $\tau=\lambda^{-1}$ represents an inverse lengthscale, $\tau^{2s-d}$ is a normalizing constant to ensure $\rm{Tr}(\CC) \asymp 1$ as $\lambda \to 0,$ and $s$ is a smoothness parameter. In contrast to the setting in Assumption \ref{assumption:isotropicandlengthscale}, processes defined through the SPDE approach are typically nonstationary (and hence nonisotropic) due to boundary conditions and spatially-varying coefficients in the elliptic operator $-\mathcal{L}$. This perspective suggests via Karhunen–Lo\'eve expansion \cite{pavliotis2014stochastic} another interpretation of $\lambda$ as determining the number of eigendirections that have significant variance, and thus the effective number of frequencies that are superimposed in sample paths from the process. 
\end{example}

 The following result shows that in the small lengthscale regime where Assumption \ref{assumption:isotropicandlengthscale} holds and $\lambda$ is small, we have that  $r(\CC)^{-1/d} \asymp \lambda.$ 
 This has several important implications. First, it motivates the condition in Assumption \ref{assumption:main1} (ii), where the domain of integration is then simply determined by the correlation lengthscale. Second, it intuitively explains the choice of tapering parameter $\kappa \asymp m_* r(\CC)^{-1/d} \asymp m_* \lambda,$ whose size is determined by the correlation lengthscale.  Third, it allows us to clearly contrast the performance of the tapering and sample covariance estimators in the small lengthscale regime, as summarized in the following corollary:

\begin{corollary}\label{cor:SmallLengthscaleBanded}
Let Assumptions~\ref{assumption:main1} and \ref{assumption:isotropicandlengthscale} hold. 
      There exists a universal constant $\lambda_0>0$ such that, for any $\lambda < \lambda_0$, the
      covariance operator satisfies $r(\CC) \asymp \lambda^{-d}.$ Consequently, the sample covariance estimator and tapering estimator with $\kappa = m_* \lambda$ satisfy
     \begin{align}
         \frac{\E\|\hat{\CC}-\CC\|}{\|\CC\|} 
         &\asymp 
          \sqrt{\frac{\lambda^{-d}}{N}}
         \lor 
         \frac{\lambda^{-d}}{N} 
         , \label{eq:SampleCovBoundSmallLengthscale}\\
         \frac{\E\|\hat{\CC}_{\kappa}-\CC\|}{\|\CC\|} 
         &\lesssim
         \varepsilon^*
         + \biggl( 
         \sqrt{\frac{\log (\lambda^{-d}) }{N}}
         \lor
         \frac{\log (\lambda^{-d})}{N} \biggr)
         \label{eq:BandedBoundSmallLengthscale}
         .
     \end{align}
     In particular, if $\nu_m = m^{-\alpha}$ then $\varepsilon_* \asymp N^{-\frac{\alpha}{2\alpha+d}},$ and if $\nu_m \asymp e^{-m^t}$ then $\varepsilon_* \asymp \sqrt{(\log N)^{d/t} / N}.$ 
\end{corollary}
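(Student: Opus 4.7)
The plan is to reduce everything to the asymptotic $r(\CC) \asymp \lambda^{-d}$ for small $\lambda$: once this is in hand, the bound \eqref{eq:SampleCovBoundSmallLengthscale} on the sample covariance is immediate from \eqref{eq:Koltchinksiibound}, and \eqref{eq:BandedBoundSmallLengthscale} on the tapering estimator follows from Theorem \ref{thm:bandable_upper_bound} after substituting $r(\CC)^{-1/d} \asymp \lambda$ and $\log r(\CC) \asymp \log(\lambda^{-d})$; in particular, the prescribed tapering radius becomes $\kappa = m_* r(\CC)^{-1/d} \asymp m_* \lambda$, exactly as stated. Thus only the equivalence $r(\CC) \asymp \lambda^{-d}$ genuinely needs proving.

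The trace is immediate: since $\mathrm{Vol}(D)=1$ and $k(x,x)=\mathsf{K}(0)$ is constant in $x$, one has $\mathrm{Tr}(\CC)=\mathsf{K}(0)$, independent of $\lambda$. For the operator norm, I would establish $\|\CC\|\asymp \lambda^d$ from both sides. The upper bound uses the standard estimate $\|\CC\|\le \sup_{x\in D}\int_D \mathsf{K}(\|x-y\|/\lambda)\,dy$ together with the change of variable $z=(y-x)/\lambda$ to obtain
\[
\|\CC\| \;\le\; \lambda^d \int_{\R^d} \mathsf{K}(\|z\|)\,dz,
\]
where the integral on the right is a finite constant depending only on the base kernel. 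For the matching lower bound I test $\CC$ against $\psi=\mathbf{1}_B$, where $B\subset D$ is a Euclidean ball of radius $\lambda/2$ centered at an interior point of $D$, which is feasible once $\lambda<\lambda_0$ for a sufficiently small universal $\lambda_0$. Since $\|x-y\|/\lambda \le 1$ on $B\times B$ and $\mathsf{K}$ is strictly decreasing and positive, $\langle \CC\psi,\psi\rangle\ge \mathsf{K}(1)\,\mathrm{Vol}(B)^2$ while $\|\psi\|^2=\mathrm{Vol}(B)\asymp \lambda^d$, giving $\|\CC\|\gtrsim \lambda^d$. Dividing yields $r(\CC)\asymp \lambda^{-d}$.

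The explicit evaluations of $\varepsilon_*$ for the two tail profiles reduce to solving $\nu_m\asymp \sqrt{m^d/N}$ in Definition \ref{def:1}. For $\nu_m=m^{-\alpha}$ this gives $m_*\asymp N^{1/(2\alpha+d)}$ and hence $\varepsilon_*\asymp N^{-\alpha/(2\alpha+d)}$; for $\nu_m=e^{-m^t}$ the same balance yields $m_*^t\asymp \log N$ at leading order, so $m_*\asymp (\log N)^{1/t}$ and $\varepsilon_*\asymp \sqrt{(\log N)^{d/t}/N}$.

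The main (and essentially only) obstacle is justifying integrability of $\mathsf{K}$ on $\R^d$, which is needed to close the upper bound on $\|\CC\|$ but is not stated explicitly in Assumption \ref{assumption:isotropicandlengthscale}. This is, however, forced by Assumption \ref{assumption:main1} \ref{assumption:sparsity}: the same change of variable $z=(y-x)/\lambda$ together with the decay of $\{\nu_m\}$ translates the tail condition there into $\int_{\|z\|>m}\mathsf{K}(\|z\|)\,dz\lesssim \nu_m\to 0$, yielding the required integrability and covering all standard examples such as the squared exponential and Mat\'ern kernels.
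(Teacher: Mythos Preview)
Your reduction to $r(\CC)\asymp\lambda^{-d}$ and the subsequent plug-in into \eqref{eq:Koltchinksiibound} and Theorem~\ref{thm:bandable_upper_bound} is exactly the paper's route; the paper, however, does not argue $r(\CC)\asymp\lambda^{-d}$ directly but simply cites \cite[Theorem~2.8]{al2023covariance} for the small-lengthscale scalings $\mathrm{Tr}(\CC)=\mathsf{K}(0)$ and $\|\CC\|\asymp\lambda^d$. Your trace computation and the lower bound $\|\CC\|\gtrsim\lambda^d$ via the test function $\mathbf{1}_B$ are clean and correct, and the evaluations of $\varepsilon_*$ agree with the paper's Example after Lemma~\ref{lemma:basic_relation}.

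The gap is in your resolution of the ``main obstacle.'' After the substitution $z=(y-x)/\lambda$, Assumption~\ref{assumption:main1}\ref{assumption:sparsity} reads
\[
\lambda^d\int_{\{z\in (D-x)/\lambda:\ \|z\|>m\,r(\CC)^{-1/d}/\lambda\}}\mathsf{K}(\|z\|)\,dz\;\lesssim\;\|\CC\|\,\nu_m,
\]
so the integral is over the bounded set $(D-x)/\lambda$ with threshold $m\,r(\CC)^{-1/d}/\lambda$, and the right-hand side carries the factor $\|\CC\|/\lambda^d$. To turn this into $\int_{\|z\|>m}\mathsf{K}(\|z\|)\,dz\lesssim\nu_m$ you need both $r(\CC)^{-1/d}/\lambda\lesssim 1$ and $\|\CC\|/\lambda^d\lesssim 1$, and each of these is equivalent to the upper bound $\|\CC\|\lesssim\lambda^d$ you are trying to prove; the lower bound you already have only gives $r(\CC)^{-1/d}/\lambda\gtrsim 1$, which is the wrong direction for the threshold. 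Hence the argument is circular as written. The cleanest fix is to take integrability of $\mathsf{K}$ on $\R^d$ as a standing hypothesis---it holds for all the examples in the paper and is what the cited external result effectively supplies---after which your change-of-variables upper bound $\|\CC\|\le\lambda^d\int_{\R^d}\mathsf{K}(\|z\|)\,dz$ goes through directly.
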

\begin{proof}
    By \cite[Theorem 2.8]{al2025covariancea}, for any covariance operator $\CC$ with covariance function $k$ satisfying Assumption~\ref{assumption:isotropicandlengthscale} it holds that, for small $\lambda,$ $\mathrm{Tr}(\CC) = 1$, $\|\CC\| \asymp \lambda^d,$ and $r(\CC) \asymp \lambda^{-d}.$ The bound \eqref{eq:SampleCovBoundSmallLengthscale} follows directly by \cite[Theorem 2.8]{al2025covariancea}. The bound \eqref{eq:BandedBoundSmallLengthscale} follows by plugging the values of $r(\CC)$ into the bound derived in Theorem~\ref{thm:bandable_upper_bound}.
\end{proof}

\begin{remark}
While we choose to work under Assumption \ref{assumption:isotropicandlengthscale} for exposition purposes, the scaling $r(\CC)^{-1/d} \asymp \lambda$ can also be immediately verified for SPDE covariance models as formally introduced in \eqref{eq:SPDE}. Indeed, as $\lambda\to 0$, one can directly verify via a Karhunen–Lo\'eve expansion that $\mathrm{Tr}(\CC) \asymp 1$ and that
$\|\CC\| \asymp \lambda^{-(2s-d)} (\lambda^{-2})^{-s}\asymp \lambda^d.$ 
\end{remark}

We conclude this section with a lemma which demonstrates that, in the small lengthscale regime, the sequence $\{\nu_m\}$ in the definition of our banded covariance class is determined by the tail behavior of the covariance function.
\begin{lemma}\label{lem:characterize-nu_m}
    Under Assumption~\ref{assumption:isotropicandlengthscale}, and for all sufficiently small $\lambda$,
    Assumption~\ref{assumption:main1}~(ii) is satisfied with
    \begin{align*}
        \nu_m = c \int_m^\infty r^{d-1} \mathsf{K}(r) dr, \qquad m=1,2,\dots,
    \end{align*}
    where $c$ is a universal constant chosen to ensure that $\nu_1=1$.
\end{lemma}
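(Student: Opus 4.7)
The strategy is to reduce Assumption~\ref{assumption:main1}(ii) to a one-dimensional radial tail integral against the base kernel $\mathsf{K}$, using the isotropic structure $k_\lambda(x,y)=\mathsf{K}(\|x-y\|/\lambda)$. First, since $\mathsf{K}>0$ by Assumption~\ref{assumption:isotropicandlengthscale}(ii), we may enlarge the domain of integration from $D$ to $\R^d$ without losing the inequality:
\[
\sup_{x\in D}\int_{\{y\in D:\|x-y\|> m r(\CC)^{-1/d}\}} |k(x,y)|\,dy
\;\le\; \sup_{x\in D}\int_{\{y\in \R^d:\|y-x\|> m r(\CC)^{-1/d}\}} \mathsf{K}(\|y-x\|/\lambda)\,dy.
\]
Then change variables $z=(y-x)/\lambda$, yielding a factor $\lambda^d$, and pass to polar coordinates. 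Denoting by $\omega_{d-1}$ the surface area of the unit sphere in $\R^d$, one obtains
\[
\sup_{x\in D}\int_{\{y:\|x-y\|> m r(\CC)^{-1/d}\}} |k(x,y)|\,dy
\;\le\; \omega_{d-1}\,\lambda^d \int_{m r(\CC)^{-1/d}/\lambda}^{\infty} s^{d-1}\,\mathsf{K}(s)\,ds.
\]

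Next, invoke Corollary~\ref{cor:SmallLengthscaleBanded} (or its proof), which shows that for sufficiently small $\lambda$ one has $\|\CC\|\asymp \lambda^d$ and $r(\CC)^{-1/d}\asymp \lambda$ with constants depending only on $\mathsf{K}$ and $d$. The prefactor $\lambda^d$ is then absorbed into $\|\CC\|$, and the lower limit of integration $m r(\CC)^{-1/d}/\lambda$ is comparable to $m$. Choosing $c=\bigl(\int_1^\infty s^{d-1}\mathsf{K}(s)\,ds\bigr)^{-1}$ normalizes $\nu_1=1$, and the bound becomes the desired Assumption~\ref{assumption:main1}(ii). The required structural properties of $\{\nu_m\}$ are straightforward: positivity follows from $\mathsf{K}>0$, and monotonicity (the sequence is decreasing) follows because the defining integral is taken over a shrinking set as $m$ increases.

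The main obstacle is the final comparison between $\int_{c_\star m}^\infty s^{d-1}\mathsf{K}(s)\,ds$ and $\int_m^\infty s^{d-1}\mathsf{K}(s)\,ds$, where $c_\star$ is the proportionality constant in $r(\CC)^{-1/d}/\lambda \asymp 1$. When $c_\star\ge 1$ this comparison is immediate by monotonicity of the truncation. When $c_\star<1$, one must bound the additional portion $\int_{c_\star m}^{m} s^{d-1}\mathsf{K}(s)\,ds$, which can be handled via the decreasing nature of $\mathsf{K}$ together with a change of variable $s=c_\star u$, producing a constant depending on $c_\star$, $\mathsf{K}$, and $d$ that can be absorbed into the hidden constant of the $\lesssim$ in Assumption~\ref{assumption:main1}(ii); the normalization $\nu_1=1$ is unaffected by such rescaling since $c$ is defined after the comparison.
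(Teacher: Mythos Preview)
Your argument follows the paper's proof essentially step for step: enlarge the integration domain to $\R^d$ using positivity of $\mathsf{K}$, rescale by $\lambda$, pass to polar coordinates, and invoke $\|\CC\|\asymp\lambda^d$ and $r(\CC)\asymp\lambda^{-d}$ from Corollary~\ref{cor:SmallLengthscaleBanded}. In fact the paper is \emph{less} careful than you on what you flag as the ``main obstacle'': it simply writes the lower limit of integration as $m\lambda$ in place of $m\,r(\CC)^{-1/d}$ from the outset, identifying the two quantities without comment and absorbing any discrepancy into the implicit constants.

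Your proposed resolution of the case $c_\star<1$, however, does not go through as written. After the substitution $s=c_\star u$ the integrand contains $\mathsf{K}(c_\star u)$, and since $\mathsf{K}$ is decreasing with $c_\star<1$ one has $\mathsf{K}(c_\star u)\ge\mathsf{K}(u)$, which is the wrong inequality. Concretely, for $\mathsf{K}(r)=e^{-r^2/2}$ the ratio
\[
\int_{c_\star m}^\infty s^{d-1}\mathsf{K}(s)\,ds \;\Big/\; \int_m^\infty s^{d-1}\mathsf{K}(s)\,ds
\]
grows like $e^{(1-c_\star^2)m^2/2}$ as $m\to\infty$, so no $m$-independent constant can absorb it. The honest fix, in keeping with the paper's informal treatment, is to regard the lemma as holding up to the identification $r(\CC)^{-1/d}\asymp\lambda$: one verifies Assumption~\ref{assumption:main1}(ii) with the shifted sequence $\nu_m=c'\int_{c_\star m}^\infty r^{d-1}\mathsf{K}(r)\,dr$ (normalized so that $\nu_1=1$), which has the same qualitative decay and feeds identically into the downstream rates.
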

\begin{proof}
As in the proof of Corollary~\ref{cor:SmallLengthscaleBanded}, for small $\lambda,$ $\mathrm{Tr}(\CC) = 1$, $\|\CC\| \asymp \lambda^d,$ and $r(\CC) \asymp \lambda^{-d}.$  Therefore, to characterize the sequence $\{\nu_m\}$ in Assumption~\ref{assumption:main1}, we note that 
\begin{align*}
    &\sup_{x\in D} \int_{\{y: \|x-y\| > m r(\CC)^{-1/d}\}} k_\lambda(x,y) \, dy
    =
    \sup_{x\in D} \int_{\{y: \|x-y\| > m \lambda \}} \mathsf{K}(\|x-y\|/\lambda) \, dy\\
    & \hspace{3.8cm} \le 
     \int_{\R^d} \mathsf{K}(\|y\|/\lambda) \indicator_{\{\|y\| > m \lambda\}}(y) \,dy
     =
     \lambda^d \int_{\R^d} \mathsf{K}(\|y\|) \indicator_{\{\|y\| > m \}}(y) \, dy,
\end{align*}
where the first inequality follows by Assumption~\ref{assumption:isotropicandlengthscale}, and the last equality by the substitution $y \mapsto \lambda^{-1}y$. Switching to polar coordinates then yields 
\begin{align*}
    \lambda^d \int_0^\infty r^{d-1} \int_{S_{d-1}} \mathsf{K}(r \|u\|) \indicator_{\{r\|u\| > m \}}(u) d s_{d-1}(u) \, dr
    &= \lambda^d A(d) \int_m^\infty r^{d-1} \mathsf{K}(r) \, dr,
\end{align*}
where $S_{d-1}$ is the unit sphere, $ds_{d-1}$ is the corresponding spherical measure, and $A(d)$ is the surface area of the unit sphere in $\R^d$. To conclude, note again that $\|\CC\| \asymp \lambda^d$ for small $\lambda$.
\end{proof}

 For example, Lemma \ref{lem:characterize-nu_m} implies that for $k=k^{\text{SE}}$ with $d=1$, we have that $k_1(r) = e^{-r^2/2},$ and straightforward computations show that $\nu_m \lesssim ce^{-m^2/2}.$ Therefore, \eqref{eq:BandedBoundSmallLengthscale} yields that
 \[
  \frac{\E\|\hat{\CC}_{\kappa}-\CC^{\text{SE}}\|}{\|\CC\|} \lesssim \sqrt{\frac{(\log N)^{d/2}}{N}}
         + \biggl( 
         \sqrt{\frac{\log (\lambda^{-d}) }{N}}
         \lor
\frac{\log (\lambda^{-d})}{N} \biggr).
 \]
  In Section~\ref{sec:Numerics}, we further discuss Corollary~\ref{cor:SmallLengthscaleBanded} and Lemma \ref{lem:characterize-nu_m} in a numerical example.

\section{Estimating sparse covariance operators}\label{sec:sparse}

\subsection{Upper bound}\label{ssec:upperboundsparse}
For a Gaussian process on $D=[0,1]^d$ taking values in $\R$ with covariance function $k$, we define
\[
\|k\|^q_q:=\sup_{x\in D} \int_{D} |k(x,y)|^q dy,
\qquad \|k\|_{\infty}:=\sup_{x,y\in D} |k(x,y)|.
\]
In this section, we  invoke an approximate sparsity assumption that will be formalized through the following notions of \emph{$L^q$-sparsity} and \emph{capacity} of a covariance operator.
\begin{definition}
The \emph{$L^q$-sparsity} for $q\in [0,1]$ and \emph{capacity} of a covariance operator $\CC$ with kernel $k$ are defined respectively as
\[
\Gamma_1(q,\CC):= \frac{\|k\|_q^q\|k\|_{\infty}^{1-q}}{\|\CC\|}, 
\qquad 
\Gamma_2(\CC):=\frac{\E_{u\sim \mathrm{GP}(0,\CC)}[\sup_{x\in D} u(x)]}{\sqrt{\|k\|_{\infty}}}.
\]
\end{definition}
Notice that both $\Gamma_1(q,\CC)$ and $\Gamma_2(\CC)$ are \emph{dimension free} and \emph{scale invariant}. The following lemma, proved in Appendix \ref{app:B}, shows that, similar to $r(\CC),$ $\Gamma_1(q,\CC)$ is bounded below by $1.$ 
\begin{lemma}\label{lemma:Gamma1}
For $q\in [0,1]$, it holds that $\Gamma_1(q,\CC)\ge  \Gamma_1(1,\CC)\ge 1$.
\end{lemma}

\begin{remark} The capacity $\Gamma_2(\CC)$ is closely related to the notion of \emph{stable dimension}. Recall that for a compact set  $T \subset \R^p$  and $g\sim \mcN(0,I_p)$, the stable dimension of $T$ \cite[Section 7]{vershynin2018high} is the squared ratio of its \emph{Gaussian width} to its \emph{radius},
\[
d(T):=\left(\frac{\E\sup_{t\in T} \,\langle g,t  \rangle}{\sup_{t\in T} \|t\|}\right)^2.
\]
The squared version of $\Gamma_2(\CC)$ admits the analogous form \cite[Proposition 3.1]{al2025covariancea}
\[
\Gamma^2_2(\CC)=\frac{\left(\E_{u\sim \mathrm{GP}(0,\CC)}[\sup_{x\in D} u(x)]\right)^2}{\|k\|_{\infty}}\asymp\left(\frac{\E \sup_{f\in \F} \langle f,u\rangle }{\sup_{f\in \F}\|f\|_{\psi_2}}\right)^2,
\]
where $\mathcal{F}:=\left\{\ell_x\right\}_{x \in D}$ denotes the family of evaluation functionals, i.e. $\langle \ell_x,u \rangle=u(x)$, and $\psi_2$ denotes the Orlicz norm with Orlicz function $\psi(x) = e^{x^2}-1,$ see e.g. \cite[Definition 2.5.6]{vershynin2018high}. Comparing $\Gamma_2^2(\CC)$ with $d(T)$, it is clear that $\Gamma_2^2(\CC)$ naturally generalizes the stable dimension as it characterizes the complexity for more general and abstract Gaussian processes as opposed to the canonical Gaussian process on $T$, i.e. $\langle g,t \rangle$. The capacity $\Gamma_2(\CC)$ is rooted in deep chaining results  \cite[Theorem 1.13]{mendelson2016upper} (see also \cite{mendelson2010empirical}). The paper \cite{koltchinskii2017concentration} used these empirical process results to obtain dimension-free bounds for the sample covariance operator \cite[Theorem 4]{koltchinskii2017concentration}. Subsequently, \cite{al2024non,al2025covariancea,al2025covarianceb} used similar techniques to obtain dimension-free bounds for various thresholding matrix and operator estimators.
\end{remark}

\begin{assumption}\label{assumption:main2}
The data $\{u_n\}_{n=1}^N$ consists of $N$ independent copies of a real-valued, centered  Gaussian process $u \sim \mathrm{GP}(0,\CC)$ that is Lebesgue-almost everywhere continuous on $D = [0,1]^d$ with probability 1. 
  Moreover: 
    \begin{enumerate}[label=(\roman*)]
    \item There exists a constant $C_0>1$ such that $\Gamma_1(0,\CC) \exp(-C_0 \Gamma_2^2(\CC))\le 1.$
    \item  
    The sample size satisfies 
    $\sqrt{N}\ge \Gamma_2(\CC)\ge \frac{1}{\sqrt{N}}.$
    \end{enumerate}
\end{assumption}

For a tunable parameter $\rho>0,$ we define the thresholding estimator as
\[
\hat{k}_{\rho}(x,y):=\hat{k}(x,y)\indicator_{\{|\hat{k}(x,y)|\geq \rho\}}(x,y),\qquad (\hat{\CC}_{\rho}\psi)(\cdot) := \int_D \hat{k}_{\rho}(\cdot, y)\psi(y) \, dy, \ \ \psi\in L^2(D). 
\]

The upper bound achieved by this estimator is similar to \cite[Theorem 2.2]{al2025covariancea}, but now written in terms of $\Gamma_1(q,\CC)$ and $\Gamma_2(\CC),$ the two quantities that determine the minimax complexity in the new lower bound in Section \ref{ssec:lowerboundthresholding}. For completeness, we include a proof in Appendix \ref{app:B}.
\begin{theorem}\label{thm:sparse_upper_bound}
Under Assumption \ref{assumption:main2}, there exists an absolute constant $c>0$ such that the following holds. Let $\frac{C_0}{c}\le c_0\le \sqrt{N}$ and set 
\begin{align}\label{eq:thresholdparameter}
    \widehat{\rho} :=  \frac{c_0\sqrt{\|k\|_{\infty}}}{\sqrt{N}}\Big(\frac{1}{N}\sum_{n=1}^{N}\sup_{x\in D} u_{n}(x)\Big). 
\end{align}
Then,
\[
\frac{\E \|\widehat{\CC}_{\widehat{\rho}}-\CC\|}{\|\CC\|}\lesssim \Gamma_{1}(q,\CC)\left(\frac{\Gamma_2(\CC)}{\sqrt{N} }\right)^{1-q}.
\]
\end{theorem}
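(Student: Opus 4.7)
The plan is to follow the classical bias-variance strategy for thresholding estimators, adapted to the operator setting via a pointwise control of the kernel. First, I would apply the Schur-type bound \cite[Lemma B.1]{al2023covariance} to reduce the operator norm to an $L^1$ bound on the kernel:
\[
\|\hat{\CC}_{\hat{\rho}} - \CC\| \le \sup_{x \in D} \int_D |\hat{k}_{\hat{\rho}}(x,y) - k(x,y)|\, dy.
\]
The goal then becomes to establish a pointwise sparsity-adapted bound on the integrand and combine it with the rewritten $L^q$-sparsity identity $\|k\|_q^q = \Gamma_1(q,\CC)\,\|\CC\|/\|k\|_{\infty}^{1-q}$.

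The key step will be to construct a high-probability event $E$ on which two facts hold simultaneously: (a) the data-driven threshold is of the right deterministic order, $\hat{\rho} \asymp \|k\|_\infty\, \Gamma_2(\CC)/\sqrt{N}$; and (b) the sample kernel concentrates uniformly, $\sup_{x,y\in D}|\hat{k}(x,y)-k(x,y)| \le c_1\hat{\rho}$ for a small absolute constant $c_1 \in (0,1/2)$. Claim (a) follows from Gaussian concentration of $\frac{1}{N}\sum_{n=1}^N \sup_x u_n(x)$ around its mean $\E \sup_x u(x) = \sqrt{\|k\|_\infty}\,\Gamma_2(\CC)$. Claim (b) is the main empirical-process ingredient: a dimension-free uniform kernel concentration of order $\|k\|_\infty\,\Gamma_2(\CC)/\sqrt{N}$ for the centered rank-one process $u_n(x)u_n(y)-k(x,y)$, as used in the proofs of \cite[Theorem 2.2]{al2023covariance} and \cite[Theorem 4]{koltchinskii2017concentration}. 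Choosing $c_0$ at least a fixed multiple of $C_0$ and invoking Assumption \ref{assumption:main2}(i) gives $\P(E^c)\lesssim \Gamma_1(0,\CC)\exp(-c\,C_0\,\Gamma_2^2(\CC))$, small enough that the $E^c$ residual—handled by the crude bound $\|\hat{\CC}_{\hat{\rho}}-\CC\|\le \|\hat{\CC}\|+\|\CC\|$ together with \eqref{eq:Koltchinksiibound} and a Cauchy-Schwarz truncation—is absorbed into the target rate.

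On $E$, a two-case pointwise analysis closes the argument. If $|\hat{k}(x,y)|<\hat{\rho}$, then $\hat{k}_{\hat{\rho}}(x,y)=0$ and by (b) $|k(x,y)| \le (1+c_1)\hat{\rho}$; if instead $|\hat{k}(x,y)|\ge\hat{\rho}$, then $|\hat{k}_{\hat{\rho}}(x,y)-k(x,y)|\le c_1\hat{\rho}$ while $|k(x,y)|\ge (1-c_1)\hat{\rho}$. In both regimes one obtains the sparsity-adapted bound $|\hat{k}_{\hat{\rho}}(x,y)-k(x,y)|\lesssim \hat{\rho}^{1-q}|k(x,y)|^q$. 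Integrating, invoking the rewritten $L^q$-sparsity identity, and substituting the order of $\hat{\rho}$ yields, on $E$,
\[
\|\hat{\CC}_{\hat{\rho}}-\CC\|\cdot \indicator_E \lesssim \hat{\rho}^{1-q}\|k\|_q^q \asymp \|\CC\|\,\Gamma_1(q,\CC)\bigl(\Gamma_2(\CC)/\sqrt{N}\bigr)^{1-q},
\]
which, combined with the $E^c$ residual analysis above, gives the stated bound. The hard part will be claim (b): because $\hat{\rho}$ is itself random, the kernel concentration must be established in a self-calibrating form that remains sharp relative to the realized value of $\hat{\rho}$, forcing us to rely on the chaining estimates of \cite{koltchinskii2017concentration,mendelson2016upper} rather than a pointwise Bernstein argument, and to carefully tune $c_0$ so that both (a) and (b) hold on the same event with the same constants.
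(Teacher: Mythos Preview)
Your proposal is correct and is essentially the argument underlying \cite[Theorem~2.2]{al2023covariance}, which the paper simply quotes as a black box. The paper's own proof is a two-line translation: it defines the deterministic threshold $\rho = c_0 \sqrt{\|k\|_\infty/N}\,\E[\sup_x u(x)]$, invokes \cite[Theorem~2.2]{al2023covariance} to obtain
\[
\E\|\hat{\CC}_{\hat{\rho}}-\CC\| \lesssim \|k\|_q^q\,\rho^{1-q} + \rho\, e^{-cN(\rho/\|k\|_\infty)\wedge(\rho/\|k\|_\infty)^2},
\]
and then rewrites both terms using the definitions of $\Gamma_1(q,\CC)$ and $\Gamma_2(\CC)$, using Assumption~\ref{assumption:main2}(i) to kill the exponential residual and Lemma~\ref{lemma:Gamma1} together with $\Gamma_2(\CC)\le\sqrt{N}$ to absorb the leftover $\Gamma_2(\CC)/\sqrt{N}$ term.

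What you propose is to unpack that citation: the Schur bound, the high-probability event on which $\hat\rho$ is of the right order and the sample kernel concentrates uniformly, and the two-case thresholding inequality $|\hat k_{\hat\rho}-k|\lesssim \hat\rho^{1-q}|k|^q$. This is exactly the machinery inside \cite{al2023covariance}, so your route is longer but more self-contained. One small slip: your statement $\P(E^c)\lesssim \Gamma_1(0,\CC)\exp(-c\,C_0\,\Gamma_2^2(\CC))$ conflates the tail probability with the full residual contribution; the probability itself should be $\lesssim \exp(-c\,c_0\,\Gamma_2^2(\CC))$, and the factor $\Gamma_1(0,\CC)=\|k\|_\infty/\|\CC\|$ enters only after multiplying by the crude size of $\|\hat\CC_{\hat\rho}-\CC\|$ on $E^c$. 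This does not affect the outcome.
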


\subsection{Lower bound}\label{ssec:lowerboundthresholding}
Here, we prove a matching lower bound for covariance operator estimation over the approximate sparsity class
\begin{align}
    \F\left(\Gamma_1(q),\Gamma_2 \right):=\left\{\CC : \Gamma_1(q,\CC)\leq \Gamma_1(q), \ \Gamma_2(\CC)\leq \Gamma_2\right\}.
\end{align}
\begin{theorem}\label{thm:approximate sparsity lower bound}
    Let $1\leq N^{\beta}\leq \lfloor\exp (\frac{1}{2}\Gamma_2^2)\rfloor-1$ for some $\beta>1$, $\Gamma_2\geq 2$, and $\Gamma_1(q)\leq MN^{(1-q)/2}\Gamma_2^{-3+q}$. The minimax risk for estimating the covariance operator over $\F(\Gamma_1(q),\Gamma_2)$ in the operator norm satisfies
    \begin{align*}
        \inf_{\hat{\CC}}\sup_{\CC\in \F(\Gamma_1(q),\Gamma_2)}\frac{\mathbb{E}\|\hat{\CC}-\CC\|}{\|\CC\|}\gtrsim \Gamma_1(q)\left( \frac{\Gamma_2}{\sqrt{N}}\right)^{1-q}.
    \end{align*}
 
    \begin{proof}[Proof of Theorem \ref{thm:approximate sparsity lower bound}]
    We mirror the high-dimensional analysis in \cite{cai2012optimal} to construct a finite subclass of covariance operators $\F_1(r,\eps_{N,r})\subseteq \F(\Gamma_1 (q),\Gamma_2)$ and derive a lower bound over this class with the reduction technique from Proposition \ref{prop:reduction}. Take $r= \bigl\lfloor \exp (\frac{1}{2}\Gamma_2^2) \bigr\rfloor-1$, and now let $I_1,\ldots ,I_{r+1}$ be a uniform partition of $D=[0,1]^d$ with $\text{Vol}(I_i)=\frac{1}{r+1}$ and $\text{diam} (I_i)\asymp (r+1)^{-1/d}.$ Let $r^*=\lfloor r/2\rfloor$ and define $\Lambda$ to be the set of all $r^* \times r$ matrices whose columns are constrained to have at most $2\ell$ nonzero entries and whose rows are given by $r$-dimensional vectors of the form $[0_{r-r^*}^\top, b^\top]^\top$ where $b \in \{0,1\}^{r^*}$ satisfies $1^\top_{r^*} b = \ell$ for an integer $\ell$ to be chosen later. For any $\lambda \in \Lambda$ with row vectors denoted $\{\lambda^j\}_{j=1}^{r^*}$, we construct a corresponding $r \times r$ matrix $A_j(\lambda^j)$ that is zero everywhere except for its $j$-th row and $j$-th column, both of which are set to $\lambda^j.$ Importantly, all column and row sums of $\sum_{j=1}^{r^*}A_j(\lambda^j)$ are at most $2\ell$. Letting $\Xi=\{0,1\}^{r^*}$, we consider the parameter space

\begin{align}\label{eq:approximate sparsity parameter space}
    \Theta:=\Xi \otimes \Lambda,
\end{align}
   and construct the covariance matrix class
    \begin{align}
        \F_0 :=\left\{\Sigma(\theta)=\begin{bmatrix}
            1 & 0_{r}^\top\\
            0_{r} & \quad\frac{1}{2} \bigl(I_{r}+\eps_{N,r}\sum_{m=1}\xi_mA_m(\lambda^m)\bigr)
        \end{bmatrix}, ~\theta=(\xi,\lambda)\in \Theta\right\}
        ,
    \end{align}
     where $\eps_{N,r}$ is a constant to be chosen later. Elements of $\F_0$ are $r+1\times r+1$ matrices with bottom right $r\times r$ sub-block being identical (up to a constant scaling of $1/2$) to the choice used to prove the lower bound in \cite[Theorem 2]{cai2012optimal}.  
     Each matrix in this sub-block has diagonal elements equal to $1/2$, and contains an $r^*\times r^*$ submatrix that is potentially nonzero at the upper-right and lower-left corners, and is 0 everywhere else. Further, each row of the $r^*\times r^*$ submatrix either contains only zeros (which is the case if the corresponding entry of $\xi$ is $0$) or has precisely $\ell$ entries with value $\eps_{N,r}/2.$

      The corresponding covariance operator class is then defined to be 
    \begin{align*}
    \F_1\left(r,\eps_{N,r} \right):=\left\{\CC : k(x,y)=\sum_{i,j=1}^r\Sigma_{ij}\mathbf{1}\{x\in I_i\}\mathbf{1}\{y\in I_j\}, \ \Sigma\in \F_0\right\}.
\end{align*}
We take $\eps_{N,r}=\nu \sqrt{\frac{\log r}{N}}$, where $\nu$ is a constant satisfying $0<\nu < \inparen{\frac{1}{3M}}^{1/(1-q)}$ and $\nu^2<\frac{\beta-1}{54\beta}$. We take $\ell=\max \bigl\{ \bigl\lceil \frac{1}{2}\Gamma_1(q)\eps_{N,r}^{-q}\bigr\rceil -1,0 \bigr\}$. Lemma \ref{lemma:app sparsity lower bound inclusion} verifies the inclusion $\F_1(r,\eps_{N,r})\subseteq \F(\Gamma_1 (q),\Gamma_2)$ and Lemma \ref{lemma:approximate sparsity lower bound 2} uses Proposition \ref{prop:reduction} in conjunction with the matrix estimation lower bounds in \cite{cai2012optimal} to prove that
\begin{align*}
        \inf_{\hat{\CC}}\sup_{\CC\in \F_1(r,\eps_{N,r})}\frac{\mathbb{E}\|\hat{\CC}-\CC\|}{\|\CC\|} \gtrsim \Gamma_1(q)\left(\frac{\Gamma_2}{\sqrt{N}} \right)^{1-q}.
    \end{align*}
    \end{proof}

\end{theorem}

\subsection{Lengthscale, sparsity, and capacity}\label{subsection:lengthscale-approx}
As in the banded setting, the performance of the thresholding estimator can be studied in the small lengthscale regime by characterizing the scaling of the $L^q$-sparsity and the capacity. This analysis was developed in \cite[Section 2.2]{al2025covariancea}, and here we summarize the main result to provide a comparison with the banded case considered in  Section~\ref{subsection:lengthscale-banded}. Specifically, the following corollary is an analog of Corollary~\ref{cor:SmallLengthscaleBanded}. Similarly to the banded setting, the thresholding estimator has sample complexity that improves exponentially on that of the sample covariance. Notice that here the thresholding parameter is chosen through an empirical approximation to the expected supremum, providing an estimator that naturally adapts to the lengthscale.

\begin{corollary}[{\cite[Theorem 2.8]{al2025covariancea}}] \label{corr:LengthscaleApproxSparse}
    Suppose that Assumptions \ref{assumption:isotropicandlengthscale} and \ref{assumption:main2} hold. There is a universal constant $\lambda_0>0$ such that, for $\lambda< \lambda_0$ and $N\gtrsim \log (\lambda^{-d})$, the covariance operator satisfies for any $q \in (0,1]$ that $r(\CC) \asymp \lambda^{-d},$ $\Gamma_1(q,\mcC) \asymp 1,$ and $\Gamma_2(q,\mcC) \asymp \log(\lambda^{-d})$. Consequently, the sample covariance estimator and thresholding estimator with 
    \[
    \widehat{\rho}
    :=
    \frac{c_0}{\sqrt{N}}\Big(\frac{1}{N}\sum_{n=1}^{N}\sup_{x\in D} u_{n}(x)\Big)
    \] 
    satisfy
\begin{align*}
    \frac{\mathbb{E} \| \widehat{\CC} - \CC \|}{\|\CC\|} & \asymp  \sqrt{\frac{\lambda^{-d}}{N}} \lor \frac{\lambda^{-d}}{N},\\
    \frac{\mathbb{E} \| \widehat{\CC}_{\widehat{\rho}} - \CC \|}{\|\CC\|} &\le \,c(q)\biggl(\frac{\log(\lambda^{-d})}{N}\biggr)^{\frac{1-q}{2}},
\end{align*}
where $c_0\gtrsim 1$ is an absolute constant and $c(q)$ is a constant that depends only on $q$. 
\end{corollary}
\begin{proof}
    The result follows directly by characterizing the bound in Theorem~\ref{thm:sparse_upper_bound} in terms of the lengthscale parameter (see the proof of \cite[Theorem 2.8]{al2025covariancea}). In addition to the characterizations of $\mathrm{Tr}(\CC)$ and $\|\CC\|$ in Corollary~\ref{cor:SmallLengthscaleBanded}, it is now also necessary to characterize sharply the expected supremum, $\E[\sup_{x\in D}u(x)]$. We note that under Assumption~\ref{assumption:isotropicandlengthscale}, $\|k\|_\infty = \sup_{x\in D}k(x,x) = \mathsf{K}(0)=1.$ 
\end{proof}

\begin{remark}\label{rem:bandedVsApprox}
    Corollary \ref{corr:LengthscaleApproxSparse} implies rate $N^{(q-1)/2}$ for $q>0.$ In contrast, Corollary \ref{cor:SmallLengthscaleBanded} shows that tapering estimators can achieve rate $N^{-1/2}$ up to a logarithmic factor, provided that the kernel has fast tail decay (e.g. for squared exponential kernels). Assumption~\ref{assumption:isotropicandlengthscale} imposes a form of \emph{ordered sparsity} in that the decay of the covariance function depends monotonically on the  physical  distance between its two arguments. On the other hand, the sparsity Assumption~\ref{assumption:main2} imposes no ordering. We can therefore think of covariance operators satisfying Assumption~\ref{assumption:isotropicandlengthscale} as fitting more naturally into the class of banded operators  captured by Assumption~\ref{assumption:main1}. Corollary~\ref{corr:LengthscaleApproxSparse} demonstrates that, while more broadly useful, thresholding estimators can still perform well in the ordered setting. In Section~\ref{sec:Numerics}, we compare the performance of tapering and thresholding estimators on ordered sparse covariance operators, and show that while both do well, the tapering estimators have a clear advantage as they utilize the additional structure provided by the ordered decay. In contrast, when ordered sparsity is not present, the numerical experiments demonstrate that thresholding still does well while the tapering estimator fails. 
\end{remark}

\section{Numerical experiments}\label{sec:Numerics}
In this section, we provide a short numerical study comparing the performance of tapering and thresholding estimators. We study covariance estimation at small lengthscale for models with ordered and unordered sparse structure. For simplicity, we restrict to physical dimension $d=1$ and discretize the domain $D=[0,1]$ with a mesh of $L=1250$ uniformly spaced points. We consider a range of lengthscale ($\lambda$) parameters ranging from $10^{-3}$ to $10^{-0.1}.$ For each lengthscale, we consider the discretized version $C$ of the covariance operator of interest $\CC$, so that $C^{ij} = k(x_i,x_j)$ for $1 \le i, j \le L.$ We then generate $N= 5 \log(\lambda^{-1})$ samples of a Gaussian process on the mesh, which we denote by $u_1,\dots, u_N \iid \mcN(0,C).$ The sample covariance estimator, tapering estimator, and thresholding estimator are then defined respectively, for $1\le i,j \le L,$ by
\begin{align*}
    \hatC^{ij} &= \frac{1}{N}\sum_{n=1}^N u_n(x_i) u_n(x_j),\qquad 
    \hatC^{ij}_{\kappa} = \hatC^{ij}  f_\kappa(x_i, x_j),\qquad 
    \hatC^{ij}_{\hat{\rho}} = \hatC^{ij} \indicator_{\{|\hatC^{ij}| \ge \hat{\rho}\}},
\end{align*}
where $\kappa$ and $\hat{\rho}$ are chosen according to Corollary~\ref{cor:SmallLengthscaleBanded}
and Corollary~\ref{corr:LengthscaleApproxSparse}, respectively. The metrics of interest are the relative errors, defined for the sample, banded, and thresholded settings respectively by
\begin{align*}
     \mcE:=\frac{\|\widehat{C}-C\|}{\|C\|}, 
     \qquad 
     \mcE_{\kappa}:=\frac{\|\widehat{C}_{\kappa}-C\|}{\|C\|},
     \qquad 
     \mcE_{\widehat{\rho}}:=\frac{\|\widehat{C}_{\widehat{\rho}}-C\|}{\|C\|}.
 \end{align*}
In Figure~\ref{fig:RelErrs1}, we restrict attention to $k^{\text{SE}}$ and $k^{\text{Ma}}$ as defined in \eqref{eq:SEandMaterncovariance}. For $k^{\text{SE}}$, the banding parameter $\kappa$ is chosen according to $\nu_m = e^{-m^2/2},$ as noted in Section~\ref{subsection:lengthscale-banded} (see also Lemma~\ref{lem:characterize-nu_m}). For $k^{\text{Ma}}$, we set the smoothness parameter $\zeta = 3/2$, in which case $\nu_m = e^{-m}$. To ensure the validity of our results, each experiment is repeated a total of 30 times, and we provide averages and 95\% confidence intervals with respect to these trials. It is evident from Figure~\ref{fig:RelErrs1} that taking only $N=5 \log(\lambda^{-d})$ samples, the relative error of both the tapering and thresholding estimators significantly improve upon that of the sample covariance as the lengthscale is taken to be smaller. 

\begin{figure}
    \centering
    \includegraphics[width=0.70\linewidth]{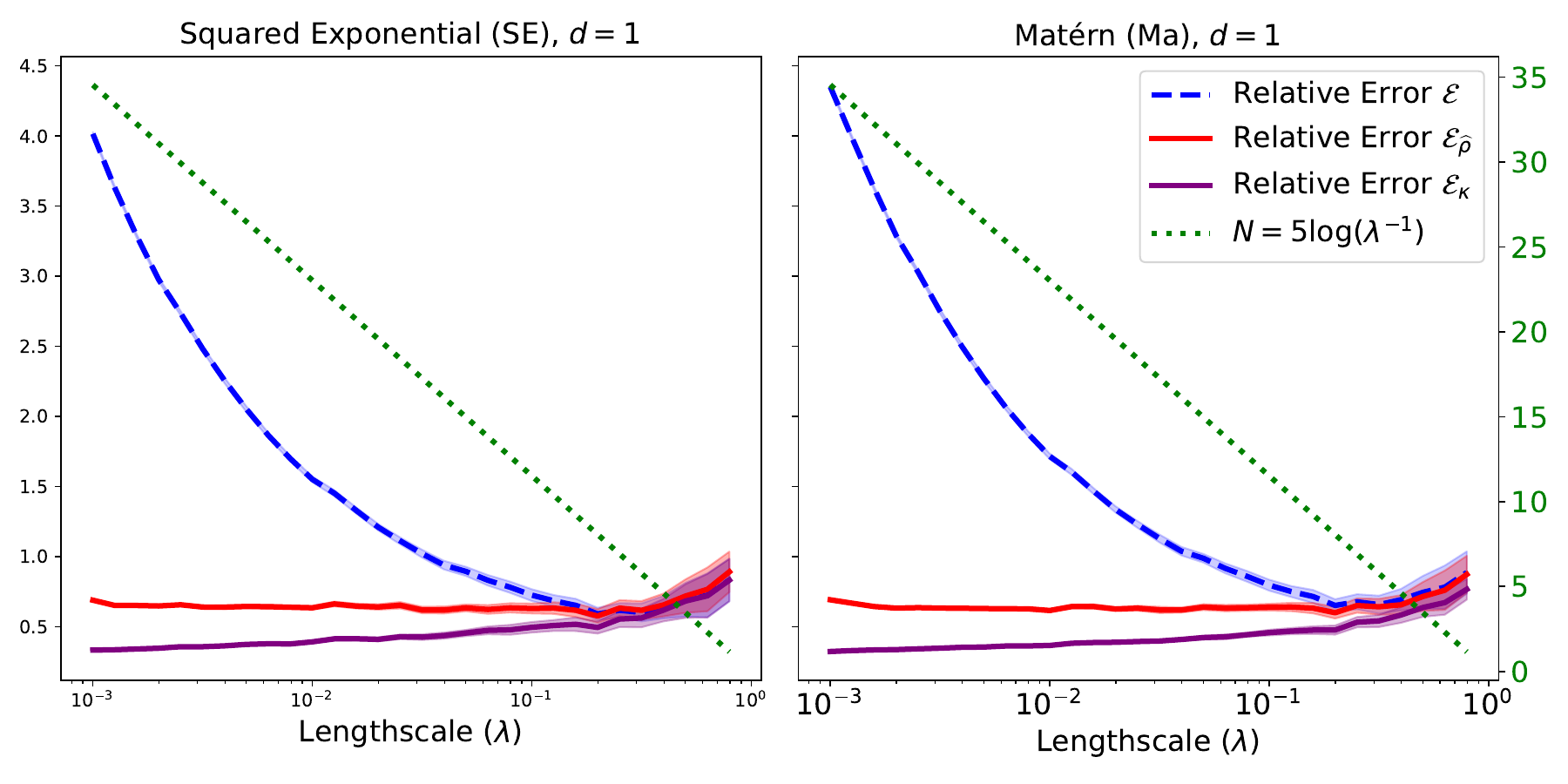}
    \caption{Plots of the average relative errors and 95\% confidence intervals achieved by the sample ($\mcE$, dashed blue), banded ($\mcE_{\kappa}$, solid purple),  thresholded ($\mcE_{\hat{\rho}}$, solid red) covariance estimators based on sample size ($N$, dotted green) for the squared exponential kernel (left) and Mat\'ern kernel (right) in physical dimension $d=1$ over 30 trials.}
    \label{fig:RelErrs1}
\end{figure}
As is to be expected (see also Remark~\ref{rem:bandedVsApprox}), although both estimators improve upon the sample covariance, the tapering estimator is superior as it exploits the underlying ordered sparsity of the covariance operators. Next, we compare the performance of the three estimators in examples with unordered structure. We first consider the periodic covariance function $k^{\text{period}}$ given by 
\begin{align*}
    k_\lambda^{\text{period}}(x,y) = \exp \inparen{-\frac{2 \sin^2(\pi \|x-y\|/\eta)}{\lambda^2}},
\end{align*}
where $\eta>0$ is the periodicity parameter. Intuitively, the periodic covariance function is composed of $\lfloor 1/\eta \rfloor$ bumps spaced uniformly over the domain, each behaving locally like $k^{\text{SE}}$. Therefore, although $k_\lambda^{\text{period}}$ is not monotonically decreasing and hence clearly violates Assumption~\ref{assumption:isotropicandlengthscale}, it is obvious that it will become more sparse as the lengthscale is taken to be smaller, and this sparsity will be unordered. In our experiments we set $\eta=0.4.$ Second, we consider the kernel $k^{\text{SE}}$ applied to a random permutation of the grid. This will preserve the sparsity but destroy the ordering. We consider now a range of lengthscale parameters $\lambda$ ranging from $10^{-2.2}$ to $10^{-0.1},$ with all other simulation parameters set to be the same as before. 
\begin{figure}
    \centering
    \includegraphics[width=0.70\linewidth]{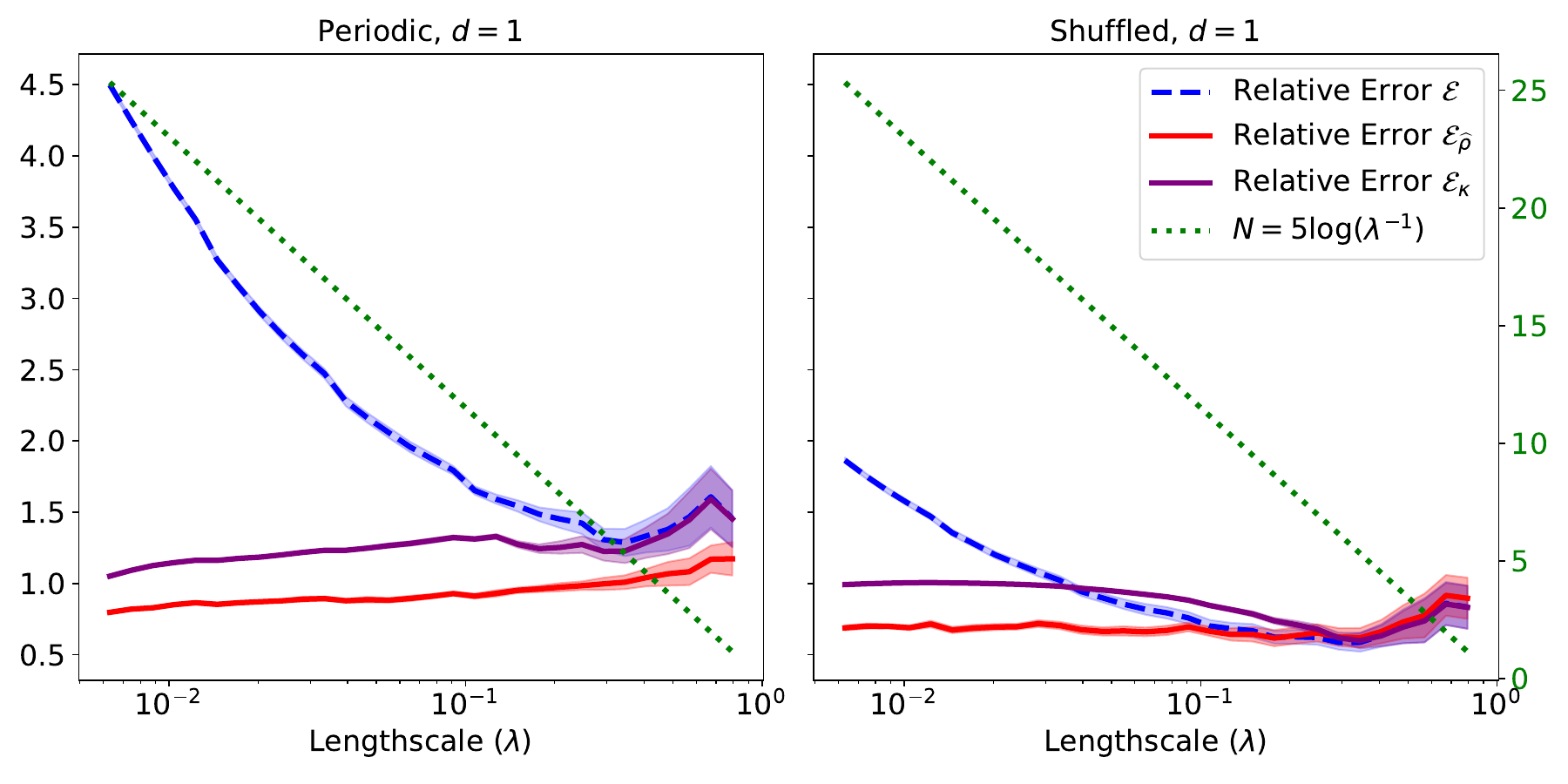}
    \caption{Plots of the average relative errors and 95\% confidence intervals achieved by the sample ($\mcE$, dashed blue), banded ($\mcE_{\kappa}$, solid purple),  thresholded ($\mcE_{\widehat{\rho}}$, solid red) covariance estimators based on sample size ($N$, dotted green) for the periodic kernel (left) and shuffled kernel (right) in physical dimension $d=1$ over 30 trials.}
    \label{fig:RelErrs2}
\end{figure}
The results are shown in Figure~\ref{fig:RelErrs2}, from which it is clear that the thresholding estimator outperforms the tapering estimator, with the latter performing worse than the zero estimator for small lengthscales. We note here that our theory is developed in the small lengthscale regime, and the behavior of the relative errors for larger lengthscales can be erratic due to the extremely small sample size.

\section{Conclusions}\label{sec:conclusions}
In this paper, we have established optimal convergence rates for estimation of banded and $L^q$-sparse covariance operators. To do so, we leveraged techniques from high-dimensional covariance matrix estimation while also addressing new challenges that emerge in the infinite-dimensional setting. Several questions stem from this work. 
\begin{enumerate}
    \item   Estimating covariance operators with non-Gaussian data (e.g., log-concave \cite{adamczak2010quantitative} and heavy-tailed distributions \cite{abdalla2022covariance}) is an interesting future direction. There is a rapidly growing body of literature on understanding the statistical and computational complexity of such tasks in the high-dimensional setting \cite{sun2016robust,wei2017estimation,ke2019user,cherapanamjeri2020algorithms}, as well as on closely related robust covariance estimation \cite{chen2018robust,mendelson2020robust,diakonikolas2023algorithmic}. 
    \item An important open direction is to investigate structured covariance operator estimation under other norms, beyond the operator norm we considered. For the sample covariance and in the unstructured setting, \cite{puchkin2023sharper} studied covariance estimation in Hilbert-Schmidt norm. In contrast to the classical matrix estimation problem, the infinite dimensional operator estimation problem lends itself naturally to study covariance estimation under norms that may account for the smoothness of the Gaussian process data. 
    \item   Another direction for future work concerns operator estimation under other structural assumptions. A natural question is estimation of Toeplitz covariance operators building on existing matrix theory \cite{cai2013optimal,klockmann2024efficient}, which may be of particular interest in time series analysis and in stationary spatial statistics. 
     Other covariance classes with broad applications include low-rank covariance in kriging for large spatial datasets \cite{cressie2008fixed,wang2022low}, and Kronecker-structured covariance models for multi-way and tensor-valued data \cite{wang2022kronecker,puchkin2024dimension}.  
    \item Finally, we conjecture that it could be possible to establish a tight connection between covariance operator estimation and existing Fourier analysis techniques for the study of kernel density estimators, see e.g. \cite[Section 1.3]{tsybakov2008introduction}. Such a connection may help intuitively explain the emergence of the nonparametric rate $N^{-\frac{\alpha}{2\alpha+d}}$ in banded matrix and operator estimation.  
\end{enumerate}

\section*{Acknowledgments}
The authors are grateful for the support of NSF DMS-2237628, DOE DE-SC0022232, and the BBVA Foundation.

\bibliographystyle{siam} 
\bibliography{references}

\begin{appendix}
 \section{Proofs of results in Section \ref{sec:banded}}\label{app:A}

\subsection{Banded covariance operators: Upper bound}\label{app:upperbandable}

The following lemma establishes an alternative representation of the tapering function $f_\kappa$ that will be useful in our analysis. 
\begin{lemma}\label{lemma:f_kappa} The tapering function $f_{\kappa}$ in \eqref{eq:tapering_func} can be written as
\begin{align*}
f_{\kappa}(x,y)=\kappa^{-d}\sum_{\sigma\in \{1,2\}^d}(-1)^{\sum_{i=1}^{d}\sigma_i}\prod_{i=1}^{d} (\sigma_i\kappa-|x_i-y_i|)_{+}.
\end{align*}
\end{lemma}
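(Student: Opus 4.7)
The plan is to establish the identity one coordinate at a time and then multiply out. First I would verify the one-dimensional version: for scalars $s, t \in \R$ one has
\begin{equation*}
\min\left\{\frac{(2\kappa - |s-t|)_+}{\kappa}, 1\right\} = \kappa^{-1}\bigl[(2\kappa - |s-t|)_+ - (\kappa - |s-t|)_+\bigr].
\end{equation*}
This reduces to a short case analysis on $|s-t|$: the three regimes $|s-t|\le \kappa$, $\kappa < |s-t|\le 2\kappa$, and $|s-t|>2\kappa$ give values $1$, $2 - |s-t|/\kappa$, and $0$ respectively, matching the piecewise definition already recorded for the one-dimensional tapering function in \eqref{eq:taperingmatrix}. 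This identity is precisely the RHS for $d=1$, indexed by $\sigma_1 \in \{1,2\}$ with sign $(-1)^{\sigma_1}$.

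Next I would take the product over coordinates. Since $f_\kappa$ factorizes by definition as $f_\kappa(x,y) = \prod_{i=1}^d g_\kappa(x_i,y_i)$ with $g_\kappa(x_i,y_i) := \min\{(2\kappa-|x_i-y_i|)_+/\kappa,\,1\}$, substituting the one-dimensional identity for each factor yields
\begin{equation*}
f_\kappa(x,y) = \prod_{i=1}^d \left[\kappa^{-1}\sum_{\sigma_i\in\{1,2\}}(-1)^{\sigma_i}(\sigma_i\kappa - |x_i-y_i|)_+\right].
\end{equation*}
Expanding the product over the independent indices $\sigma_1,\dots,\sigma_d$ and collecting the $\kappa^{-1}$ factors gives exactly
\begin{equation*}
f_\kappa(x,y) = \kappa^{-d}\sum_{\sigma\in\{1,2\}^d}(-1)^{\sum_{i=1}^d \sigma_i}\prod_{i=1}^d (\sigma_i\kappa - |x_i-y_i|)_+,
\end{equation*}
which is the claimed formula.

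There is essentially no obstacle here: the only content is the one-dimensional case-check, which is a direct comparison of the piecewise-linear hat function with the difference of two ramp functions $t \mapsto (2\kappa - t)_+$ and $t \mapsto (\kappa - t)_+$. The multidimensional identity follows purely from the tensor-product structure of $f_\kappa$ and distributivity.
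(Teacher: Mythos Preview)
Your proof is correct and follows exactly the same approach as the paper: establish the one-dimensional identity $\min\{(2\kappa-|x_i-y_i|)_+/\kappa,1\} = \kappa^{-1}[(2\kappa-|x_i-y_i|)_+ - (\kappa-|x_i-y_i|)_+]$, then expand the tensor product over $i=1,\dots,d$. The paper simply asserts the one-dimensional equality without the case check you provide, so your version is slightly more explicit but otherwise identical.
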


\begin{remark} When $d=1$, the representation of $f_{\kappa}$ in Lemma \ref{lemma:f_kappa} becomes
     \[
    f_{\kappa}(x,y)=\kappa^{-1}\sum_{\sigma_1\in \{1,2\}}(-1)^{\sigma_1} (\sigma_1\kappa-|x-y|)_{+}=\frac{ (2\kappa-|x-y|)_{+}-(\kappa-|x-y|)_{+}}{\kappa},
    \]
    which coincides with \cite[Lemma 1]{cai2010optimal};  see also Equation \ref{eq:taperingmatrix} in Section \ref{ssec:relatedwork}.    
\end{remark}

\begin{proof}[Proof of Lemma \ref{lemma:f_kappa}]
 Applying the equality
    \begin{align*}
    \min\left\{\frac{(2\kappa-|x_i-y_i|)_{+}}{\kappa},1\right\}&=\frac{(2\kappa-|x_i-y_i|)_{+}-(\kappa-|x_i-y_i|)_{+}}{\kappa}
    \end{align*}
and expanding the product yield that
    \begin{align*}
        f_{\kappa}(x,y)&=\prod_{i=1}^{d}\min\left\{\frac{(2\kappa-|x_i-y_i|)_{+}}{\kappa},1\right\}\\
        &=\prod_{i=1}^{d}\frac{(2\kappa-|x_i-y_i|)_{+}-(\kappa-|x_i-y_i|)_{+}}{\kappa}\\
        &=\kappa^{-d}\sum_{\sigma\in \{1,2\}^d}(-1)^{\sum_{i=1}^{d}\sigma_i}\prod_{i=1}^{d} (\sigma_i\kappa-|x_i-y_i|)_{+}.  \tag*{\qedhere}
    \end{align*}
\end{proof}


The next lemma relates the quantities $m_*$	and $\varepsilon_*$ from Definition~\ref{def:1}.

\begin{lemma}\label{lemma:basic_relation}
$m_*$ and $\varepsilon_*$ satisfy:
\begin{enumerate}[label=(\Alph*)]
\item  $N\ge 2^{-d} m_{*}^d$;
\item $\varepsilon_*= \nu_{m_*} \lor \sqrt{\frac{(m_*-1)^d}{N}}$;
\item $\varepsilon_* \le \sqrt{\frac{m_*^d}{N}}\le 2^{d/2}\varepsilon_*$.
\end{enumerate}
\end{lemma}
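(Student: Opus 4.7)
The plan is to exploit directly the defining property of $m_*$—namely that $\nu_{m_*} \le \sqrt{m_*^d/N}$ while $\nu_{m} > \sqrt{m^d/N}$ for every $m<m_*$—together with the two basic monotonicities in play: $m\mapsto \nu_m$ is decreasing and $m\mapsto \sqrt{m^d/N}$ is increasing. The only real subtlety throughout is the boundary case $m_* = 1$, which by $\nu_1 = 1$ can only occur when $N = 1$; in every other case $m_* \ge 2$ and $m_* - 1 \ge 1$, so $(m_*-1)^d$ behaves as expected. I will treat (A), (B), (C) in turn, with (B) doing the main work and (A), (C) then following as short consequences.

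For (A), I will consider $m_* \ge 2$ first. Because $m_*-1 < m_*$, the defining property fails at $m_* - 1$, so
\[
\sqrt{(m_*-1)^d/N}\;<\;\nu_{m_*-1}\;\le\;\nu_1 = 1,
\]
which rearranges to $N>(m_*-1)^d$. Since $m_*\ge 2$ gives $m_*-1\ge m_*/2$, this yields $N>2^{-d}m_*^d$. The case $m_*=1$ forces $N=1$ (otherwise $\nu_1=1>\sqrt{1/N}$ would rule out $m_*=1$), and then $2^{-d}m_*^d = 2^{-d}\le 1 = N$.

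For (B), I will split the maximum defining $\varepsilon_*$ at $m_*$. For $m\ge m_*$, the monotonicity of $\nu_m$ and of $m^d$ combined with the defining inequality give
\[
\nu_m \;\le\; \nu_{m_*}\;\le\;\sqrt{m_*^d/N}\;\le\;\sqrt{m^d/N},
\]
so $\nu_m\wedge\sqrt{m^d/N} = \nu_m$, and the maximum over $\{m\ge m_*\}$ equals $\nu_{m_*}$. For $1\le m<m_*$, the definition of $m_*$ gives $\nu_m>\sqrt{m^d/N}$, so $\nu_m\wedge\sqrt{m^d/N} = \sqrt{m^d/N}$, which is increasing in $m$ and hence maximized on this set at $m=m_*-1$, giving value $\sqrt{(m_*-1)^d/N}$. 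Taking the larger of the two pieces yields (B); when $m_*=1$ the second set is empty and the convention $\sqrt{0/N}=0$ makes the formula still correct.

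Finally, (C) follows directly from (B). The upper bound $\varepsilon_*\le \sqrt{m_*^d/N}$ is immediate since both $\nu_{m_*}\le \sqrt{m_*^d/N}$ (by definition of $m_*$) and $\sqrt{(m_*-1)^d/N}\le \sqrt{m_*^d/N}$ trivially. For the lower bound, when $m_*\ge 2$, (B) gives $\varepsilon_*\ge \sqrt{(m_*-1)^d/N}$, and $m_*/(m_*-1)\le 2$ yields
\[
\sqrt{m_*^d/N}\;\le\;\Bigl(\tfrac{m_*}{m_*-1}\Bigr)^{d/2}\sqrt{(m_*-1)^d/N}\;\le\;2^{d/2}\varepsilon_*.
\]
When $m_*=1$, necessarily $N=1$ and both $\varepsilon_*$ and $\sqrt{m_*^d/N}$ equal $1$, so the inequality is trivial. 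The proof is thus a bookkeeping exercise whose only mild obstacle is verifying that the $m_*=1$ corner case never breaks the stated bounds—handled uniformly by observing it forces $N=1$.
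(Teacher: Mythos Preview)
Your proof is correct and follows essentially the same approach as the paper's: both split the maximum defining $\varepsilon_*$ at $m_*$, use the failure of the defining inequality at $m_*-1$, and handle the boundary case $m_*=1$ (equivalently $N=1$) separately. The only cosmetic difference is that you case-split on $m_*=1$ versus $m_*\ge 2$ while the paper splits on $N=1$ versus $N\ge 2$, which amounts to the same thing.
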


\begin{proof}[Proof of Lemma \ref{lemma:basic_relation}] 
  The proof follows a similar argument as \cite[Lemma C.2, Lemma 3.1]{kotekal2023minimax}. We first prove (A). If $N=1$, then $m_*=1$ and $\nu_{m_*}= \nu_1=1$, the inequality holds true. If $N\ge 2$, then $m_*\ge 2$. By definition of $m_*$, we have $\nu_{m_*-1}> \sqrt{\frac{(m_*-1)^d}{N}}$, and so \[
    1\ge \nu_{m_*-1}> \sqrt{\frac{(m_*-1)^d}{N}}\ge \sqrt{\frac{(m_*/2)^d}{N}},
    \]
    where the first inequality follows from the monotonicity of $\{\nu_m\}$ and $\nu_1=1$, and the third inequality follows by $m_*\ge 2$. Rearranging yields that $N\ge 2^{-d} m_*^d$.

    We next show (B). Consider that
    \begin{align*}
    \varepsilon_* &=\max_{m\in \N} \bigg\{ \nu_m\land \sqrt{\frac{m^d}{N}}\bigg\}=\max_{m<m_*} \bigg\{ \nu_m\land \sqrt{\frac{m^d}{N}}\bigg\}\lor \max_{m\ge m_*} \bigg\{ \nu_m\land \sqrt{\frac{m^d}{N}}\bigg\}\\
    &\overset{\text{(i)}}{=}\max_{m<m_*} \bigg\{\sqrt{\frac{m^d}{N}}\bigg\}\lor \max_{m\ge m_*} \bigg\{ \nu_m\land \sqrt{\frac{m^d}{N}}\bigg\}
    \overset{\text{(ii)}}{=}\sqrt{\frac{(m_*-1)^d}{N}}\lor \nu_{m_*},
    \end{align*}
    where in (i) we used that $\nu_m>\sqrt{\frac{m^d}{N}}$ for all $m<m_*$, and the second term in (ii) follows from the ordering of the $\{\nu_m\}$ and $\nu_{m_*}\le \sqrt{\frac{m_*^d}{N}}$.
    
    Now we prove (C). If $N=1$, then $m_*=1, \nu_{m_*}= \nu_1=1$, and so $\varepsilon_*=1$, the inequality holds true. If $N\ge 2$, then $m_*\ge 2$ and
    \begin{align*}
        \sqrt{\frac{m_*^d}{N}}\le \sqrt{\frac{2^d(m_*-1)^d}{N}}\le 2^{d/2} \varepsilon_*,
    \end{align*}
    where the second inequality follows from (B). It remains to show $\varepsilon_*\le \sqrt{\frac{m_*^d}{N}}$. For any $m\ge m_*$, we have by the monotonicity of $\{\nu_m\}$,
    \begin{align*}
        \nu_m\land\sqrt{\frac{m^d}{N}} \le \nu_m\le \nu_{m_*}\le \sqrt{\frac{m_*^d}{N}}.
    \end{align*}
    For any $m<m_*$, by definition of $\nu_{m_*}$ it follows that
    \[
     \nu_m\land\sqrt{\frac{m^d}{N}}=\sqrt{\frac{m^d}{N}}<\sqrt{\frac{m_*^d}{N}}.
    \]
    Since for all $m$ we have shown that
    \[
      \nu_m\land\sqrt{\frac{m^d}{N}}\le \sqrt{\frac{m_*^d}{N}},
    \]
    taking the maximum over $m\in \N$ yields $\varepsilon_*\le \sqrt{\frac{m_*^d}{N}}$, which completes the proof.
\end{proof}

\begin{proof}[Proof of Theorem \ref{thm:bandable_upper_bound}]
If $m_*^d >r(\CC)$, then $\kappa>1$ and the tapering estimator is equivalent to the sample covariance. Therefore, if $m_*^d >r(\CC)$ it follows from \eqref{eq:Koltchinksiibound} and Lemma \ref{lemma:basic_relation} (C) that 
\[
\frac{\mathbb{E} \| \widehat{\mathcal{C}}_{\kappa} - \mathcal{C} \|}{\|\mathcal{C} \| }  \asymp  \sqrt{\frac{r(\mathcal{C})}{N} }   \lor  \frac{r(\mathcal{C})}{N} \lesssim \sqrt{\frac{m_{*}^d}{N}} \asymp \varepsilon_*. 
\]
We henceforth assume $m_*^d\le r(\CC)$. Let $\kappa=m r(\CC)^{-1/d}$ for some $m\in \bigl[1,r(\CC)^{1/d}\bigr]$ to be determined, and consider the bias-variance trade-off:
\[
\E \|\hat{\CC}_{\kappa}-\CC\|\le 
\|\E [\hat{\CC}_{\kappa}]-\CC\| + \E \|\hat{\CC}_{\kappa}-\E [\hat{\CC}_{\kappa}]\|.
\]
We now bound the two terms in turn.

\textbf{Bias:}
By definition of $f_{\kappa}$ in \eqref{eq:tapering_func}, we have $f_{\kappa}(x,y)=1$ if $\|x-y\|_{\infty}\le \kappa
$; $f_{\kappa}(x,y)\in [0,1]$ if $\kappa<\|x-y\|_{\infty}\le 2\kappa$; and $f_{\kappa}(x,y)=0$ if $\|x-y\|_{\infty}>2\kappa$. Thus, the bias is bounded by
\begin{align}\label{eq:bias}
\begin{split}
\|\E [\hat{\CC}_{\kappa}]-\CC\| 
&\overset{\text{(i)}}{\le} \sup_{x\in D} \int_{D} |1-f_{\kappa}(x,y)| |k(x,y)|dy   \\
&= \sup_{x\in D} \bigg(\int_{\|x-y\|_{\infty}<\kappa}+ \int_{\|x-y\|_{\infty}\ge \kappa}\bigg) |1-f_{\kappa}(x,y)| |k(x,y)|dy \\
& = \sup_{x\in D} \int_{\|x-y\|_{\infty}\ge \kappa} |1-f_{\kappa}(x,y)||k(x,y)|dy  \\
&\le \sup_{x\in D} \int_{\|x-y\|_{\infty}\ge \kappa} |k(x,y)|dy 
\overset{\text{(ii)}} \le \sup_{x\in D} \int_{\|x-y\|\ge \kappa} |k(x,y)|dy  
\overset{\text{(iii)}}{\lesssim} \|\CC\| \nu_m,
\end{split}
\end{align}
where (i) follows by \cite[Lemma B.1]{al2025covariancea}, (ii) follows by $\|x-y\|_{\infty}\le \|x-y\|$, and (iii) follows from Assumption \ref{assumption:main1} \ref{assumption:sparsity}.

\textbf{Variance:} 
For a compact subset $B \subset \R^d$, we define the restriction of $\CC$ to $B\cap D$ as
\begin{align}\label{eq:restriction}
    k_{B}(x,y) := k(x,y)\mathbf{1}\{x,y\in B\cap D\}, 
    \quad
    (\CC_{B}\psi)(\cdot) := \int_D k_{B}(\cdot, y)\psi(y) \, dy,
\end{align}
for $\psi\in L^2(D).$ By definition, $\CC_{B}:L^2(D)\to L^2(D)$ is also a covariance operator.  Using the representation formula of the tapering function $f_{\kappa}(x,y)$ in Lemma \ref{lemma:f_kappa} gives that
\begin{align*}
\hat{k}_{\kappa}(x,y)&=\hat{k}(x,y)f_{\kappa}(x,y)\\
&=\sum_{\sigma\in \{1,2\}^d} \hat{k}(x,y) \kappa^{-d}(-1)^{\sum_{i=1}^{d}\sigma_i}\prod_{i=1}^{d} (\sigma_i\kappa-|x_i-y_i|)_{+}\\
&=\sum_{\sigma\in \{1,2\}^d} (-1)^{\sum_{i=1}^{d}\sigma_i} \,\,\hat{k}_{\kappa}^{(\sigma)}(x,y),
\end{align*}
where $\hat{k}_{\kappa}^{(\sigma)}(x,y):=\kappa^{-d}\,\hat{k}(x,y)\prod_{i=1}^{d} (\sigma_i\kappa-|x_i-y_i|)_{+}$.
Letting
\[
(\hat{\CC}_{\kappa}^{(\sigma)}\psi)(\cdot) := \int_D \hat{k}_{\kappa}^{(\sigma)}(\cdot, y)\psi(y) \, dy,\quad \psi\in L^2(D),
\]
we have by the triangle inequality 
\begin{align}\label{eq:decomposition}
\E \|\hat{\CC}_{\kappa}-\E [\hat{\CC}_{\kappa}]\|\le \sum_{\sigma\in \{1,2\}^d }\E \|\hat{\CC}_{\kappa}^{(\sigma)}-\E [\hat{\CC}_{\kappa}^{(\sigma)}]\|.
\end{align}
For any $x\in [0,1]$, we denote the interval $[x-\kappa,x+\kappa]$ by $B(x,\kappa)$. Using that
\begin{align*}
(\sigma_i\kappa-|x_i-y_i|)_{+} &=\mathrm{Vol}\Bigl(B(x_i,\sigma_i\kappa/2)\cap B(y_i,\sigma_i\kappa/2)\Bigr)\\
&=\int_{\theta_i\in [-\kappa,1+\kappa]} \mathbf{1}\bigl\{\theta_i\in B(x_i,\sigma_i\kappa/2)\cap B(y_i,\sigma_i\kappa/2) \bigr\} d\theta_i,
\end{align*}
we have that, for any $\sigma\in \{1,2\}^d$ and $x,y\in D$,
\begin{align*}
\prod_{i=1}^{d} (\sigma_i\kappa-|x_i-y_i|)_{+}
&=\prod_{i=1}^d \int_{\theta_i\in [-\kappa,1+\kappa]} \mathbf{1}\bigl\{\theta_i\in B(x_i,\sigma_i\kappa/2)\cap B(y_i,\sigma_i\kappa/2) \bigr\} \, d\theta_i\\
&=\int_{\theta\in [-\kappa,1+\kappa]^d } \prod_{i=1}^d \mathbf{1}\bigl\{\theta_i\in B(x_i,\sigma_i\kappa/2)\cap B(y_i,\sigma_i\kappa/2) \bigr\} \,  d\theta_i\\
&=\int_{\theta\in [-\kappa,1+\kappa]^d}  \prod_{i=1}^d \mathbf{1}\bigl\{x_i,y_i\in B(\theta_i,\sigma_i\kappa/2) \bigr\} \,  d\theta_i\\
&=:\int_{\bar{D}}\mathbf{1}\bigl\{x,y\in T(\theta) \bigr\} \, d\theta,
\end{align*}
where $\bar{D}:=[-\kappa,1+\kappa]^d$ and $T(\theta):=\bigotimes_{i=1}^{d} B(\theta_i,\sigma_i\kappa/2)=\bigotimes_{i=1}^{d}[\theta_i-\frac{\sigma_i\kappa}{2},\theta_i+\frac{\sigma_i\kappa}{2}]$. In our notation, we suppress the dependence of $T(\theta)$ on $\sigma, \kappa$ for brevity. Then, for any $\psi\in L^2(D)$,
\begin{align*}
(\hat{\CC}_{\kappa}^{(\sigma)}\psi)(x) &= \int_D \hat{k}_{\kappa}^{(\sigma)}(x, y)\psi(y) \, dy\\
&=\kappa^{-d}\int_{D}\hat{k}(x,y)\prod_{i=1}^{d} (\sigma_i\kappa-|x_i-y_i|)_{+} \psi(y) \,dy\\
&=\kappa^{-d}\int_{D} \hat{k}(x,y) \left(\int_{\bar{D}} \mathbf{1}\bigl\{x, y\in T(\theta) \bigr\} d\theta\right)\psi(y) \, dy\\
&=\kappa^{-d}\int_{\bar{D}} \left(\int_{D}\hat{k}(x,y)\mathbf{1}\bigl\{x, y\in T(\theta)\bigr\} \psi(y)dy\right) \, d\theta\\
&=\kappa^{-d}\int_{\bar{D}} \left(\int_{D} 
\hat{k}(x,y)\mathbf{1}\bigl\{x, y\in T(\theta)\cap D\bigr\} \psi(y)dy\right)d\theta\\
&=\kappa^{-d}\int_{\bar{D}} (\hat{\CC}_{T(\theta)}\psi)(x) \, d\theta.
\end{align*}
Hence, 
\[
\hat{\CC}_{\kappa}^{(\sigma)}=\kappa^{-d}\int_{\bar{D}}
\hat{\CC}_{T(\theta)} \,  d\theta,
\]
where $\hat{\CC}_{T(\theta)}$ is the restriction of $\CC$ to the domain $T(\theta)\cap D$ in the sense of \eqref{eq:restriction}, with covariance function $\hat{k}(x,y)\mathbf{1}\left\{x, y\in T(\theta)\cap D\right\}$. One can then view $\hat{\CC}_{\kappa}^{(\sigma)}$ as a mixture of covariance operators of the form $\hat{\CC}_{T(\theta)}$ with continuous uniform mixture distribution over $\theta$.

Note that $\theta = (\theta_1,\dots, \theta_d)$ with $\theta_{1},\dots, \theta_{d} \iid \mathrm{Unif}([-\kappa,1+\kappa])$. For any $1 \le i \le d,$ let $\E_i$ denote expectation with respect to $\theta_i$, and $\E_{-i}$ denote expectation with respect to $\{\theta_j\}_{j \neq i}$. Then, it follows that
\begin{align*}
\E \|\hat{\CC}_{\kappa}^{(\sigma)}-\E [\hat{\CC}_{\kappa}^{(\sigma)}]\|
&=\kappa^{-d}\,\E \left[\Big\|\int_{\bar{D}} (\hat{\CC}_{T(\theta)}- \CC_{T(\theta)})d\theta\Big\|\right]\\
&\hspace{-2cm}=\kappa^{-d}(1+2\kappa)^d\,\E \left[ \bigg\|
\E_{\{\theta_i\}\iid \mathrm{Unif}([-\kappa,1+\kappa]) }
\Big[\hat{\CC}_{T(\theta)}-\CC_{T(\theta)}\Big]\bigg\|\right]\\
&\hspace{-2cm}=\kappa^{-d}(1+2\kappa)^d\,\E \left[ \bigg\|
\E_{-i} \E_i
\Big[\hat{\CC}_{T(\theta)}-\CC_{T(\theta)}\Big]\bigg\|\right]\\
&\hspace{-2cm}=\kappa^{-d}(1+2\kappa)^d\,\E \left[ \bigg\|
\E_{-i}
\E_{\{\theta_i^{(j)}\}_{j=1}^{S}}
\Big[ \frac{1}{S}\sum_{j=1}^{S}\left(\hat{\CC}_{T([\theta_{-i},\theta_{i}^{(j)}])}-\CC_{T([\theta_{-i},\theta_i^{(j)}])}\right)\Big]\bigg\|\right], 
\end{align*}
where the last equality follows by rewriting the expectation over $\theta_i$ as an expectation over
$S$ independent copies 
$\{\theta_i^{(j)}\}_{j=1}^{S} \iid \mathrm{Unif}([-\kappa,1+\kappa])$. Now, consider as in Lemma~\ref{lemma:Q_lowerbound} the probability measure with Lebesgue density
\[
Q^{(i)}\bigl(\theta_i^{(1)},\theta_i^{(2)},\ldots,\theta_i^{(S)}\bigr)\propto \prod_{1\le s< t\le S}\mathbf{1}\left\{|\theta_i^{(s)}-\theta_i^{(t)}|>\sigma_i\kappa\right\}\prod_{1\le s\le S}\mathbf{1} \left\{\theta_i^{(s)}\in [-\kappa,1+\kappa] \right\}.
\]
By symmetry,  under $Q^{(i)}$ each $\theta_i^{(j)}$ has the same marginal distribution, which we denote by
 $Q_0^{(i)}$. It follows directly by Lemma~\ref{lemma:Q_lowerbound} with $d=1$, $S\asymp \kappa^{-1}$ and a change of measure that 
\begin{align*}
&
\kappa^{-d}(1+2\kappa)^{d}
\E \left[ \bigg\|
\E_{-i}
\E_{\{\theta_i^{(j)}\}_{j=1}^{S}}
\Big[ \frac{1}{S}\sum_{j=1}^{S}\left(\hat{\CC}_{T([\theta_{-i},\theta_{i}^{(j)}])}-\CC_{T([\theta_{-i},\theta_i^{(j)}])}\right)\Big]\bigg\|\right]  \\
&\hspace{-.25cm}=\kappa^{-d}(1+2\kappa)^{d-1}\,
\E \left[ \bigg\|
\E_{-i}
\E_{ Q^{(i)} }
\Big[ \frac{1}{S}\sum_{j=1}^{S}\frac{1}{Q^{(i)}_0(\theta_i^{(j)})}\left(\hat{\CC}_{T([\theta_{-i},\theta_{i}^{(j)}])}-\CC_{T([\theta_{-i},\theta_i^{(j)}])}\right)\Big]\bigg\|\right] \\
 &\hspace{-.25cm}=
\kappa^{-d}\,
\E \left[ \bigg\|\E_{Q^{(1)}} \cdots
\E_{ Q^{(d)} } 
\Big[ \frac{1}{S^d}\sum_{\ell=1}^{S^d}\frac{1}{\prod_{j=1}^d Q^{(j)}_0(\tilde{\theta}_j^{(\ell)})}\left(\hat{\CC}_{T(\tilde{\theta}^{(\ell)})}-\CC_{T(\tilde{\theta}^{(\ell)})}\right)\Big]\bigg\|\right], 
\end{align*}
where $\E_{Q^{(i)} }:=\E_{\{\theta_i^{(j)}\}_{j=1}^{S}\sim Q^{(i)} }$. The final equality above follows by applying the same procedure to all coordinates $1 \le i \le d$ in turn, and renaming the $S^d$ samples by $\{\tilde{\theta}^{(\ell)}\}_{1\le \ell\le S^d}$, where each $\tilde{\theta}^{(\ell)}=\bigl(\tilde{\theta}_1^{(\ell)},\tilde{\theta}_2^{(\ell)},\ldots,\tilde{\theta}_d^{(\ell)}\bigr)\in\R^d$. 

\begin{figure}[htbp]
\centering
\includegraphics[scale=0.715]{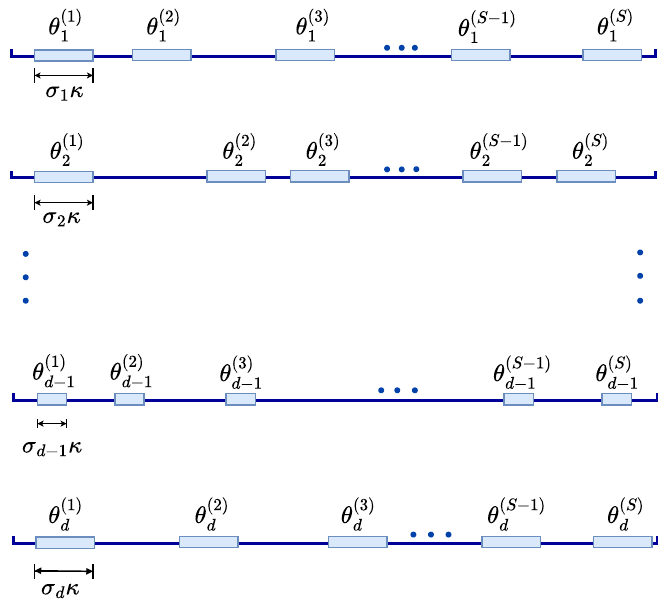}
\caption{Illustration of $\bigl\{\theta_i^{(j)}\bigr\}_{j=1}^{S}\sim Q^{(i)}$ and intervals $\bigl\{B(\theta_i^{(j)},\sigma_i\kappa/2) \bigr\}_{j=1}^{S}$, $1\le i\le d$. }
\label{fig:figure1}
\end{figure}

For each coordinate $i$, the random samples $\bigl\{\theta_i^{(j)}\bigr\}_{j=1}^{S}\sim Q^{(i)}$ satisfy $\bigl|\theta_i^{(s)}-\theta_i^{(t)}\bigr|>\sigma_i\kappa$ almost surely, namely $B\bigl(\theta_i^{(s)},\frac{\sigma_i\kappa}{2}\bigr)\cap B\bigl(\theta_i^{(t)},\frac{\sigma_i\kappa}{2}\bigr)=\emptyset$ for $s\ne t$. For $\tilde{\theta}^{(\ell)}$ and $\tilde{\theta}^{(\ell^{\prime})}$ with $\ell\ne \ell^{\prime}$, there exists at least one coordinate $\omega$ such that $\tilde{\theta}_{\omega}^{(\ell)}\ne \tilde{\theta}_{\omega}^{(\ell^{\prime})}$. This implies $B\bigl(\tilde{\theta}_{\omega}^{(\ell)},\sigma_{\omega}\kappa/2\bigr)\cap B\bigl(\tilde{\theta}_{\omega}^{(\ell^{\prime})},\sigma_{\omega}\kappa/2\bigr)=\emptyset$, hence $T\bigl(\tilde{\theta}^{(\ell)}\bigr)\cap T\bigl(\tilde{\theta}^{(\ell^{\prime})}\bigr)=\emptyset$ by the definition of $T(\theta)=\bigotimes_{i=1}^{d} B(\theta_i,\frac{\sigma_i\kappa}{2})$. Therefore, $\bigl\{T\bigl(\tilde{\theta}^{(\ell)} \bigr)\bigr\}_{1\le \ell\le S^d}$ are disjoint and by Lemma \ref{lemma:disjoint}, the last line of the above display is equivalent to
\begin{align}
&\frac{\kappa^{-d}}{S^d}\,\E \left[ 
\E_{Q^{(1)}} \cdots \E_{Q^{(d)} }\bigg[ \max_{1\le \ell\le S^d}\Big\|\frac{1}{\prod_{j=1}^d Q^{(j)}_0(\tilde{\theta}_j^{(\ell)})}\left(\hat{\CC}_{T(\tilde{\theta}^{(\ell)})}-\CC_{T(\tilde{\theta}^{(\ell)})}\right)\Big\|\bigg]\right] \nonumber\\
&\lesssim
\E \left[ 
\E_{Q^{(1)}}\cdots\E_{Q^{(d)} } 
\bigg[ \max_{1\le \ell\le S^d}\Big\|\hat{\CC}_{T(\tilde{\theta}^{(\ell)})}-\CC_{T(\tilde{\theta}^{(\ell)})}\Big\|\bigg]\right] \nonumber\\
&=
\E_{Q^{(1)}}\cdots\E_{Q^{(d)} } 
\left[\E\bigg[\max_{1\le \ell\le S^d}\Big\|\hat{\CC}_{T(\tilde{\theta}^{(\ell)})}-\CC_{T(\tilde{\theta}^{(\ell)})}\Big\|\bigg]\right] \nonumber\\
&=\E_{\{\tilde{\theta}^{(\ell)}\}_{\ell=1}^{S^d}}\left[\E\bigg[\max_{1\le \ell\le S^d}\Big\|\hat{\CC}_{T(\tilde{\theta}^{(\ell)})}-\CC_{T(\tilde{\theta}^{(\ell)})}\Big\|\bigg]\right] \nonumber,
\end{align}
where the inequality is due to the lower bound in Lemma \ref{lemma:Q_lowerbound} \ref{lemma:Q_lowerbound_b} and $S\asymp \kappa^{-1}$. We have so far shown that 
\begin{align}\label{eq:deviation1}
    \E \|\hat{\CC}_{\kappa}^{(\sigma)}-\E [\hat{\CC}_{\kappa}^{(\sigma)}]\|
    \lesssim
    \E_{\{\tilde{\theta}^{(\ell)}\}_{\ell=1}^{S^d}}\left[\E\bigg[\max_{1\le \ell\le S^d}\Big\|\hat{\CC}_{T(\tilde{\theta}^{(\ell)})}-\CC_{T(\tilde{\theta}^{(\ell)})}\Big\|\bigg]\right],
\end{align}
where the operator norm of $\hat{\CC}_{\kappa}^{(\sigma)}-\E [\hat{\CC}_{\kappa}^{(\sigma)}]$ is controlled by the maximum of operator norms of $S^d\asymp \kappa^{-d}$  covariance restrictions to disjoint small domains $T(\tilde{\theta}^{(\ell)})$ with volume roughly $\kappa^d$. This establishes a link between global estimation and local estimates. 

Next we apply the dimension-free covariance estimation result \cite{koltchinskii2017concentration} to control each $\|\hat{\CC}_{T(\tilde{\theta}^{(\ell)})}-\CC_{T(\tilde{\theta}^{(\ell)})}\|$ in the small domain, and take a union bound for the expected maximum. 

By \cite[Corollary 2]{koltchinskii2017concentration}, for all $t>1$, with probability at least $1-e^{-t}$,
\begin{align}\label{eq:upper_aux1}
\begin{split}
\|\hat{\CC}_{T(\tilde{\theta}^{(\ell)}) }-\CC_{T(\tilde{\theta}^{(\ell)})}\|
&\lesssim \|\CC_{T(\tilde{\theta}^{(\ell)})}\|\Biggl(\sqrt{\frac{r(\CC_{T(\tilde{\theta}^{(\ell)})})}{N}} \vee \frac{r(\CC_{T(\tilde{\theta}^{(\ell)})})}{N} 
\vee \sqrt{\frac{t}{N}} \vee \frac{t}{N}\Biggr).
\end{split}
\end{align}

We use the following two facts to proceed:
\begin{enumerate}[label=(\alph*)]
    \item $\|\CC_{T(\tilde{\theta}^{(\ell)})} \| \le \|\CC\|$;
    \item $\mathrm{Tr}\bigl(\CC_{T(\tilde{\theta}^{(\ell)})}\bigr)\lesssim \kappa^d\, \mathrm{Tr}(\CC)$.
\end{enumerate}  
Here (a) follows directly by the definition \eqref{eq:restriction} and (b) follows from Assumption \ref{assumption:main1} \ref{assumption:magnitude} since
\[
\mathrm{Tr}\bigl(\CC_{T(\tilde{\theta}^{(\ell)})}\bigr)=\int_{T(\tilde{\theta}^{(\ell)})\cap D} \hspace{-0.5cm} k(x,x)dx\le \Big(\sup_{x\in D}k(x,x)\Big)\mathrm{Vol}\bigl(T(\tilde{\theta}^{(\ell)})\bigr)\lesssim \kappa^d\, \mathrm{Tr}(\CC).
\]

Applying (a) and (b) to \eqref{eq:upper_aux1} gives that, for all $t>1$, with probability at least $1-e^{-t}$,
\[
\|\hat{\CC}_{T(\tilde{\theta}^{(\ell)}) }-\CC_{T(\tilde{\theta}^{(\ell)})}\| \lesssim\|\CC\|\left(\sqrt{\frac{\kappa^d r(\CC)}{N}} \vee \frac{\kappa^{d} r(\CC)}{N} \vee \sqrt{\frac{t}{N}} \vee \frac{t}{N}\right).
\]
Then, for all $t\gtrsim \|\CC\|\Big(\sqrt{\frac{\kappa^{d} r(\CC)}{N}} \vee \frac{\kappa^d r(\CC)}{N}\Big)$,
\begin{align*}
    &\P\left(\max_{1\le \ell\le S^d} \|\hat{\CC}_{T(\tilde{\theta}^{(\ell)}) }-\CC_{T(\tilde{\theta}^{(\ell)})}\|\ge t\right) \le \sum_{\ell=1}^{S^d}\P\left(\|\hat{\CC}_{T(\tilde{\theta}^{(\ell)})}-\CC_{T(\tilde{\theta}^{(\ell)})}\|\ge t\right) \\
    &\lesssim S^d \exp\left(-N\min\left\{\frac{t^2}{\|\CC\|^2},\frac{t}{\|\CC\|}\right\}\right)
    = \exp\left(-N\min\left\{t^2\|\CC\|^{-2},t\|\CC\|^{-1}\right\}+\log(S^d)\right) .
\end{align*}
Integrating the tail bound yields that 
\begin{align*}
\E \left[\max_{1\le \ell\le S^d} \|\hat{\CC}_{T(\tilde{\theta}^{(\ell)}) }-\CC_{T(\tilde{\theta}^{(\ell)})}\|\right]& \hspace{-0.1cm} = \hspace{-0.1cm}\int_{0}^{\infty}  \hspace{-0.25cm} \P\left(\max_{1\le \ell\le S^d} \|\hat{\CC}_{T(\tilde{\theta}^{(\ell)}) }-\CC_{T(\tilde{\theta}^{(\ell)})}\|>t\right)dt \\
&\lesssim \|\CC\| \left(\sqrt{\frac{\kappa^{d}r(\CC)}{N}}\lor \frac{\kappa^{d}r(\CC)}{N} + \sqrt{\frac{\log S^d}{N}} \lor \frac{\log S^d}{N} \right).
\end{align*}
The above inequality holds for every $\bigl\{\tilde{\theta}^{(\ell)}\bigr\}_{\ell=1}^{S^d}$ and $S\asymp \kappa^{-1}$. Therefore,
\begin{align}\label{eq:decomposition1}
\begin{split}
&\E_{\{\tilde{\theta}^{(\ell)}\}_{\ell=1}^{S^d}}\left[\E\bigg[\max_{1\le \ell\le S^d}\Big\|\hat{\CC}_{T(\tilde{\theta}^{(\ell)})}-\CC_{T(\tilde{\theta}^{(\ell)})}\Big\|\bigg]\right] \\
&\hspace{2.5cm}\lesssim \|\CC\| \left(\sqrt{\frac{\kappa^{d}r(\CC)}{N}}\lor \frac{\kappa^{d}r(\CC)}{N}+ \sqrt{\frac{\log (\kappa^{-d})}{N}} \lor \frac{\log (\kappa^{-d})}{N} \right).
\end{split}
\end{align}

Combining \eqref{eq:decomposition}, \eqref{eq:deviation1}, and \eqref{eq:decomposition1} gives that
\begin{align}\label{eq:variance}
\E \|\hat{\CC}_{\kappa}-\E [\hat{\CC}_{\kappa}]\|
\lesssim  \|\CC\| \left(\sqrt{\frac{\kappa^{d}r(\CC)}{N}}\lor \frac{\kappa^{d}r(\CC)}{N} +\sqrt{\frac{\log (\kappa^{-d})}{N}} \lor \frac{\log (\kappa^{-d})}{N} \right).
\end{align}

\textbf{Combining bias and variance bounds:}
Recall that $\kappa=m r(\CC)^{-1/d}$. Combining the bias bound \eqref{eq:bias} and variance bound \eqref{eq:variance} gives that
\begin{align*}
\E \|\hat{\CC}_{\kappa}-\CC\| 
\lesssim \|\CC\| \left(\nu_m+\sqrt{\frac{m^d}{N}}\lor \frac{m^d}{N} + \sqrt{\frac{\log (m^{-d}r(\CC))}{N}} \lor \frac{\log (m^{-d}r(\CC))}{N} \right).
\end{align*}
Taking $m=m_{*}\in \bigl[1,r(\CC)^{1/d}\bigr]$ and noting that $N\gtrsim m_*^d$ by Lemma \ref{lemma:basic_relation} (A), we deduce that
\begin{align*}
\E \|\hat{\CC}_{\kappa}-\CC\| &\lesssim \|\CC\|\left(\max_{m\in \N} \bigg\{ \nu_m\land \sqrt{\frac{m^d}{N}}\bigg\}+\sqrt{\frac{\log r(\CC)}{N}}\lor \frac{\log r(\CC)}{N}\right)\\
&=\|\CC\|\left(\varepsilon_*+\sqrt{\frac{\log r(\CC)}{N}}\lor \frac{\log r(\CC)}{N}\right).  \tag*{\qedhere}
\end{align*}
\end{proof}

\begin{remark}
   In combining \eqref{eq:decomposition}, \eqref{eq:deviation1}, and \eqref{eq:decomposition1},  the step \eqref{eq:variance} gives an exponential prefactor $2^d$, which is an artifact of our proof technique in which we apply the triangle inequality in \eqref{eq:decomposition} without exploiting the cancellations in the decomposition of $f_{\kappa}(x,y)$ (Lemma \ref{lemma:f_kappa}). For reference, we note that adjacent $\sigma,\sigma^{\prime}$ have different signs $(-1)^{\sum_{i=1}^d\sigma_i}$ but $\hat{k}_{\kappa}^{(\sigma)}(x,y)\approx \hat{k}_{\kappa}^{(\sigma^{\prime})}(x,y)$, so many of the terms in the summation $(-1)^{\sum_{i=1}^{d}\sigma_i} \,\,\hat{k}_{\kappa}^{(\sigma)}(x,y)+(-1)^{\sum_{i=1}^d\sigma_i^{\prime}}\,\,\hat{k}_{\kappa}^{(\sigma^{\prime})}(x,y)$ will cancel out. A more careful analysis is expected to yield a polynomial dependence on $d$. Since however we consider $d$ to be a constant throughout, we are not concerned with obtaining the sharpest dependence here. 
\end{remark}

We conclude this section with two technical lemmas that were used in the proof of Theorem \ref{thm:bandable_upper_bound}.

\begin{lemma}\label{lemma:Q_lowerbound}
   There are two constants $\kappa_0$ and $c_0$ depending only on $d$ such that the following holds. For any $0<\kappa\le \kappa_0$ and $S\le c_0\kappa^{-d}$, define \[
    Q(d\theta_1,d\theta_2,\ldots,d\theta_{S})=\frac{1}{Z}  \prod_{1\le i< j\le S}\mathbf{1}\left\{\|\theta_i-\theta_j\|>2\kappa\right\}\prod_{1\le i\le S}\mathbf{1}\{\theta_i\in \bar{D}\}d\theta_i,
    \]
    where $ Z=\int_{(\bar{D})^{\otimes S}}\prod_{1\le i< j\le S}\mathbf{1}\left\{\|\theta_i-\theta_j\|>2\kappa\right\}\prod_{1\le i\le S}d\theta_i$. Then, 
    \begin{enumerate}[label=(\alph*)]
    \item $Q$ is a probability measure over $(\bar{D})^{\otimes S}$. \label{lemma:Q_lowerbound_a}
   \item Let $Q_0$ denote the marginal probability density of $Q$. It holds that $\inf_{\theta\in \bar{D}}Q_0(\theta)\gtrsim 1.$ \label{lemma:Q_lowerbound_b}
\end{enumerate}
\end{lemma}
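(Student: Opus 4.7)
My plan is to prove part (a) by a direct sequential packing argument, and part (b) via a self-bootstrapping computation using the conditional density representation of $Q_0$ and the exchangeability of $Q$.

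For part (a), I pick $\theta_1,\ldots,\theta_S$ sequentially: given $\theta_1,\ldots,\theta_{j-1}\in \bar{D}$ with pairwise separation exceeding $2\kappa$, the set of admissible positions for $\theta_j$ in $\bar{D}$ has Lebesgue volume at least $\mathrm{Vol}(\bar{D})-(j-1)C_d(2\kappa)^d$, where $C_d$ denotes the volume of the unit ball in $\R^d$. Choosing $c_0=c_0(d)$ small enough that $S\cdot C_d(2\kappa)^d\le \mathrm{Vol}(\bar{D})/4$ whenever $S\le c_0\kappa^{-d}$, this remaining volume stays above $\mathrm{Vol}(\bar{D})/2$ for every $j\le S$, so that $Z\ge (\mathrm{Vol}(\bar{D})/2)^S>0$ and $Q$ is a genuine probability measure.

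For part (b), exchangeability of $Q$ (symmetry of the hard-core indicator) implies that all one-dimensional marginals equal $Q_0$ and that the conditional density of $\theta_1$ given $(\theta_2,\ldots,\theta_S)$ under $Q$ is the normalized indicator of $R^{\ast}:=\bar{D}\setminus\bigcup_{j=2}^S B(\theta_j,2\kappa)$. Integrating the conditional against the marginal $Q_{-1}$ of $(\theta_2,\ldots,\theta_S)$ yields
\[
Q_0(\theta) = \E_{Q_{-1}}\!\left[\frac{\mathbf{1}\{\theta\in R^{\ast}\}}{\mathrm{Vol}(R^{\ast})}\right].
\]
I first extract the uniform upper bound $M:=\sup_\theta Q_0(\theta)\le 2/\mathrm{Vol}(\bar{D})$ from the deterministic inequality $\mathrm{Vol}(R^{\ast})\ge \mathrm{Vol}(\bar{D})-(S-1)C_d(2\kappa)^d\ge \mathrm{Vol}(\bar{D})/2$. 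Then, using $\mathrm{Vol}(R^{\ast})\le \mathrm{Vol}(\bar{D})$ together with a union bound,
\[
Q_0(\theta) \ge \frac{1-\sum_{j=2}^S \P_{Q_{-1}}(\theta_j\in B(\theta,2\kappa))}{\mathrm{Vol}(\bar{D})}.
\]
By exchangeability the marginal of each $\theta_j$ under $Q_{-1}$ coincides with $Q_0$, so each summand is at most $M\cdot C_d(2\kappa)^d\le 2C_d(2\kappa)^d/\mathrm{Vol}(\bar{D})$; taking $c_0$ small enough makes the total $\le 1/2$, whence $Q_0(\theta)\ge 1/(2\mathrm{Vol}(\bar{D}))\gtrsim 1$.

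The main delicacy is the potential circularity in bounding $\P_{Q_{-1}}(\theta_j\in B(\theta,2\kappa))$ via the sup of $Q_0$: this is resolved by first extracting the deterministic uniform bound on $M$ from the elementary volume estimate on $R^{\ast}$, and only afterwards invoking exchangeability. A single sufficiently small $c_0(d)$ handles both part (a) and both steps of part (b), while $\kappa_0(d)$ is picked so that $\mathrm{Vol}(\bar{D})=(1+2\kappa)^d$ remains comparable to a constant depending only on $d$.
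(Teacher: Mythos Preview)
Your proof is correct, and for part (b) it takes a genuinely different and considerably shorter route than the paper.

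For part (a), both you and the paper use the same sequential volume argument to show $Z>0$.

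For part (b), the paper introduces the separate $(S-1)$-point hard-core measure $P$ on $(\bar{D})^{\otimes(S-1)}$ and reduces the lower bound on $Q_0(a)$ to a lower bound on $\P_P\bigl(\sum_{i=2}^{S}\mathbf{1}_{E_i}=0\bigr)$, where $E_i=\{\|\theta_i-a\|\le 2\kappa\}$. It then argues in two further steps: a packing bound showing $\sum_i\mathbf{1}_{E_i}\le C(d)$ almost surely, and a lengthy comparison (via explicit upper and lower bounds on conditional densities $P(\theta_\omega\mid\theta_2,\ldots,\theta_{\omega-1})$ for the last $C$ indices) establishing $\P_P(\sum_i\mathbf{1}_{E_i}=t)\lesssim \P_P(\sum_i\mathbf{1}_{E_i}=0)$ for $1\le t\le C$. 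Your approach instead works with the marginal $Q_{-1}$ of $(\theta_2,\ldots,\theta_S)$ under $Q$ itself and exploits that, by exchangeability, each $\theta_j$ under $Q_{-1}$ has one-dimensional marginal exactly $Q_0$. This lets you bootstrap: the deterministic volume bound on $R^\ast$ gives $\sup_\theta Q_0(\theta)\le 2/\mathrm{Vol}(\bar{D})$ immediately, and then a plain union bound yields $\P_{Q_{-1}}(\theta\notin R^\ast)\le (S-1)\cdot\sup Q_0\cdot C_d(2\kappa)^d\le 1/2$ for $c_0$ small enough. The paper's argument is more hands-on and tracks the hard-core conditionals explicitly; yours is more conceptual, avoids the packing count and the Step~3 computation entirely, and delivers the bound in a few lines.
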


\begin{proof}

(a)  First, the normalization constant is finite, since
    \begin{align*}
    Z&=\int_{(\bar{D})^{\otimes S}}\prod_{1\le i< j\le S}\mathbf{1}\left\{\|\theta_i-\theta_j\|>2\kappa\right\}\prod_{1\le i\le S}d\theta_i \\
    &\le \int_{(\bar{D})^{\otimes S}}1 \prod_{1\le i\le S}d\theta_i\le (\mathrm{Vol}\bigl(\bar{D})\bigr)^S<\infty.
    \end{align*}
    Second, we prove that $Z$ has a strictly positive lower bound. For any $\{\theta_i\}_{i=1}^{S}\subseteq \bar{D}$,
    \begin{align}\label{eq:volume_bound}
    \mathrm{Vol}\left(\bar{D}\backslash \cup_{i=1}^{S} B(\theta_i,2\kappa) \right)\ge 1-S\mathrm{Vol}\bigl(B(0,2\kappa)\bigr)\ge 1-c_0\kappa^{-d}C^{\prime}\kappa^d= 1-C^{\prime}c_0, 
    \end{align}
    where $C^{\prime}$ is a constant depending only on $d$. Therefore, if $c_0<\frac{1}{C^{\prime}}$,
    \begin{align*}
        Z&=\int_{(\bar{D})^{\otimes S}}\prod_{1\le i< j\le S}\mathbf{1}\left\{\|\theta_i-\theta_j\|>2\kappa\right\}\prod_{1\le i\le S}d\theta_i\\
        &=\int_{\bar{D}} \hspace{-0.15cm} \cdots \left(\int_{\bar{D}\backslash \cup_{i=1}^{S-2} B(\theta_i,2\kappa)}\left(\int_{\bar{D}\backslash \cup_{i=1}^{S-1} B(\theta_i,2\kappa)} 1 d\theta_{S}\right)d\theta_{S-1}\right) \cdots d\theta_1\ge (1-C^{\prime}c_0)^{S}>0.
    \end{align*}
   Combining the upper and lower bound of $Z$, we have verified that $Q$ is a well-defined probability measure over $(\bar{D})^{\otimes S}$.
   
   \vspace{1em}
    
  (b)
Since $Q$ is symmetric, let $Q_0$ denote the marginal probability density of $Q$. We define the probability measure over $(\bar{D})^{\otimes (S-1)}$
\[
P(d\theta_2,d\theta_3,\ldots,d\theta_S)\propto \prod_{2\le i< j\le S}\mathbf{1}\left\{\|\theta_i-\theta_j\|>2\kappa\right\}\prod_{2\le i\le S}\mathbf{1}\{\theta_i\in \bar{D}\}d\theta_i.
\]
For any fixed $a\in \bar{D},$ we define the events $E_i=\{\|\theta_i-a\|\le 2\kappa\}$ and its complement $E_i^c=\{\|\theta_i-a\|> 2\kappa\}$ for $2\le i\le S$. We prove $\inf_{\theta\in \bar{D}}Q_0(\theta)\gtrsim 1$ by establishing the following three steps:

  \textbf{Step 1:} $Q_0(a)\gtrsim \mathbb{P}_{\{\theta_i\}_{i=2}^S\sim P}\left(\sum_{i=2}^{S}\mathbf{1}_{E_i}=0\right)$.
  
  \textbf{Step 2:} $
  \sum_{t=0}^{C}\mathbb{P}_{\{\theta_i\}_{i=2}^S\sim P}\left(\sum_{i=2}^{S}\mathbf{1}_{E_i}=t\right)=1$, where $C$ is a constant depending only on $d$.
  
 \textbf{Step 3:}
  For $1\le t\le C$, $\mathbb{P}_{\{\theta_i\}_{i=2}^S\sim P}\left(\sum_{i=2}^{S}\mathbf{1}_{E_i}=t\right)\lesssim \mathbb{P}_{\{\theta_i\}_{i=2}^S\sim P}\left(\sum_{i=2}^{S}\mathbf{1}_{E_i}=0\right).
  $

  The conclusion follows directly from these three steps:
  \[
  Q_0(a)\gtrsim\mathbb{P}_{\{\theta_i\}_{i=2}^S\sim P}\bigg(\sum_{i=2}^{S}\mathbf{1}_{E_i}=0\bigg)\gtrsim \sum_{t=0}^{C}\mathbb{P}_{\{\theta_i\}_{i=2}^S\sim P}\bigg(\sum_{i=2}^{S}\mathbf{1}_{E_i}=t\bigg)= 1.
  \]

  \textbf{Proof of Step 1:}
  By definition, 
    \begin{align*}
    Q_0(a)&=\int_{(\bar{D})^{\otimes (S-1)}} Q(a,\theta_2,\ldots,\theta_{S})\prod_{2\le i\le S}d\theta_i\\
    &=\frac{\int_{(\bar{D})^{\otimes {(S-1)}}} \prod_{2\le i\le S}\mathbf{1}\left\{\|\theta_i-a\|>2\kappa\right\} \prod_{2\le i< j\le S}\mathbf{1}\left\{\|\theta_i-\theta_j\|>2\kappa\right\}\prod_{2\le i\le S}d\theta_i}{\int_{(\bar{D})^{\otimes S}}\prod_{1\le i< j\le S}\mathbf{1}\left\{\|\theta_i-\theta_j\|>2\kappa\right\}\prod_{1\le i\le S}d\theta_i} \\
    &=\frac{\int_{(\bar{D})^{\otimes {(S-1)}}} \prod_{2\le i\le S}\mathbf{1}_{E_i^c} \prod_{2\le i< j\le S}\mathbf{1}\left\{\|\theta_i-\theta_j\|>2\kappa\right\}\prod_{2\le i\le S}d\theta_i}{\int_{(\bar{D})^{\otimes S}}\prod_{1\le i< j\le S}\mathbf{1}\left\{\|\theta_i-\theta_j\|>2\kappa\right\}\prod_{1\le i\le S}d\theta_i} \\
    &\ge \frac{\int_{(\bar{D})^{\otimes {(S-1)}}} \prod_{2\le i\le S}\mathbf{1}_{E_i^c} \prod_{2\le i< j\le S}\mathbf{1}\left\{\|\theta_i-\theta_j\|>2\kappa\right\}\prod_{2\le i\le S}d\theta_i}{\int_{(\bar{D})^{\otimes S}}\prod_{2\le i< j\le S}\mathbf{1}\left\{\|\theta_i-\theta_j\|>2\kappa\right\}\prod_{1\le i\le S}d\theta_i} \\
    &=\frac{1}{\mathrm{Vol}(\bar{D})}\frac{\int_{(\bar{D})^{\otimes {(S-1)}}} \prod_{2\le i\le S}\mathbf{1}_{E_i^c} \prod_{2\le i< j\le S}\mathbf{1}\left\{\|\theta_i-\theta_j\|>2\kappa\right\}\prod_{2\le i\le S}d\theta_i}{\int_{(\bar{D})^{\otimes (S-1)}}\prod_{2\le i< j\le S}\mathbf{1}\left\{\|\theta_i-\theta_j\|>2\kappa\right\}\prod_{2\le i\le S}d\theta_i} \\
    &=\frac{1}{\mathrm{Vol}(\bar{D})}\mathbb{P}_{\{\theta_i\}_{i=2}^S\sim P}\bigg(\prod_{i=2}^{S}\mathbf{1}_{E_i^c}=1\bigg)
    =\frac{1}{\mathrm{Vol}(\bar{D})}\mathbb{P}_{\{\theta_i\}_{i=2}^S\sim P}\bigg(\sum_{i=2}^{S}\mathbf{1}_{E_i}=0\bigg).
    \end{align*}
    
   \textbf{Proof of Step 2:}   
   For $\{\theta_i\}_{i=2}^{S}\sim P$, with probability one it holds that $\|\theta_i-\theta_j\|>2\kappa$ for $i\ne j$. Suppose $\sum_{i=2}^{S}\mathbf{1}_{E_i}=\gamma$, then there is a subset $\{\theta_\lambda\}_{\lambda\in \Lambda}\subseteq \{\theta_i\}_{i=2}^{S} $ with cardinality $|\Lambda|=\gamma$ such that $\|\theta_\lambda-t\|\le 2\kappa$ and $\|\theta_{\lambda}-\theta_{\lambda^{\prime}}\|>2\kappa$, which implies $\bigcup_{\lambda\in \Lambda}B(\theta_{\lambda},\kappa)\subseteq B(t,3\kappa)$ and $B(\theta_{\lambda},\kappa)\cap B(\theta_{\lambda^{\prime}},\kappa)=\emptyset$. A volume argument gives that $\mathrm{Vol}\bigl(B(0,3\kappa)\bigr)\ge \gamma \mathrm{Vol}\bigl(B(0,\kappa)\bigr)$, thus $\gamma\le \mathrm{Vol}\bigl(B(0,3\kappa)\bigr)/\mathrm{Vol}\bigl(B(0,\kappa)\bigr)\le C$ where $C$ is some constant depending only on $d$. Therefore, $\sum_{i=2}^{S}\mathbf{1}_{E_i}\le C$ almost surely, $\sum_{t=0}^{C}\mathbb{P}_{\{\theta_i\}_{i=2}^S\sim P}\left(\sum_{i=2}^{S}\mathbf{1}_{E_i}=t\right)=1.$

   \textbf{Proof of Step 3:}
   Given $\prod_{2\le i<j\le \omega-1}\mathbf{1}\left\{\|\theta_i-\theta_j\|>2\kappa\right\}=1$, the conditional distribution is 
\begin{align}\label{eq:conditionals}
\begin{split}
&P(\theta_\omega|\theta_2,\ldots,\theta_{\omega-1})=\frac{P(\theta_2,\theta_3,\ldots,\theta_{\omega})}{P(\theta_2,\theta_3,\ldots,\theta_{\omega-1})}  \\
&=\prod_{i=2}^{\omega-1}\mathbf{1}\left\{\|\theta_{\omega}-\theta_i\|>2\kappa\right\}\frac{\int_{\bigl(\bar{D}\backslash \cup_{i=2}^{\omega} B(\theta_i,2\kappa)\bigr)^{\otimes (S-\omega) }} \prod_{\omega+1\le i<j\le S}\mathbf{1}\{\|\theta_i-\theta_j\|>2\kappa \}\prod_{i=\omega+1}^{S} d\theta_i }{\int_{\bigl(\bar{D}\backslash \cup_{i=2}^{\omega-1} B(\theta_i,2\kappa)\bigr)^{\otimes (S-\omega+1) }} \prod_{\omega\le i<j\le S}\mathbf{1}\{\|\theta_i-\theta_j\|>2\kappa \}\prod_{i=\omega}^{S} d\theta_i}.
\end{split}
\end{align}
Suppose $S-C< \omega\le S$. The inequality \eqref{eq:volume_bound} implies that
\begin{align*}
&\int_{\bigl(\bar{D}\backslash \cup_{i=2}^{\omega} B(\theta_i,2\kappa)\bigr)^{\otimes (S-\omega) }} \prod_{\omega+1\le i<j\le S}\mathbf{1}\{\|\theta_i-\theta_j\|>2\kappa \}\prod_{i=\omega+1}^{S} d\theta_i \\
&= \int_{\bar{D}\backslash \cup_{i=2}^{\omega} B(\theta_i,2\kappa)}  \cdots \left(\int_{\bar{D}\backslash \cup_{i=2}^{S-2} B(\theta_i,2\kappa)}\left(\int_{\bar{D}\backslash \cup_{i=2}^{S-1} B(\theta_i,2\kappa)} 1\, d\theta_{S}\right)d\theta_{S-1}\right)\cdots d\theta_{\omega+1} \\
&\ge (1-C^{\prime}c_0)^{S-\omega}>(1-C^{\prime}c_0)^{C}.
\end{align*}
Moreover,
\begin{align*}
&\int_{\bigl(\bar{D}\backslash \cup_{i=2}^{\omega} B(\theta_i,2\kappa)\bigr)^{\otimes (S-\omega) }} \prod_{\omega+1\le i<j\le S}\mathbf{1}\{\|\theta_i-\theta_j\|>2\kappa \}\prod_{i=\omega+1}^{S} d\theta_i \\
&\qquad \le \int_{\bigl(\bar{D}\backslash \cup_{i=2}^{\omega} B(\theta_i,2\kappa)\bigr)^{\otimes (S-\omega) }} 1 \prod_{i=\omega+1}^{S} d\theta_i=\Bigl(\mathrm{Vol}\bigl(\bar{D}\backslash \cup_{i=2}^{\omega} B(\theta_i,2\kappa)\bigr)\Bigr)^{S-\omega}< \left(\mathrm{Vol}(\bar{D})\right)^{C}.
\end{align*}
Therefore, for $S-C<\omega\le S$, we have
\[
(1-C^{\prime}c_0)^{C}<\int_{\bigl(\bar{D}\backslash \cup_{i=2}^{\omega} B(\theta_i,2\kappa)\bigr)^{\otimes (S-\omega) }} \prod_{\omega+1\le i<j\le S}\mathbf{1}\{\|\theta_i-\theta_j\|>2\kappa \}\prod_{i=\omega+1}^{S} d\theta_i<\left(\mathrm{Vol}(\bar{D})\right)^{C}.
\]
Replacing $\omega$ by $\omega-1$, the same argument implies
\[
(1-C^{\prime}c_0)^{C+1}<\int_{\bigl(\bar{D}\backslash \cup_{i=2}^{\omega-1} B(\theta_i,2\kappa)\bigr)^{\otimes (S-\omega+1) }} \prod_{\omega\le i<j\le S}\mathbf{1}\{\|\theta_i-\theta_j\|>2\kappa \}\prod_{i=\omega}^{S} d\theta_i<\left(\mathrm{Vol}(\bar{D})\right)^{C+1}.
\]
Combining these inequalities with \eqref{eq:conditionals} yields that, for $S-C<\omega\le S$, 
\begin{align}\label{eq:conditional_bound}
\frac{(1-C^{\prime}c_0)^{C}}{\left(\mathrm{Vol}(\bar{D})\right)^{C+1}}\le P(\theta_{\omega}|\theta_2,\ldots,\theta_{\omega-1})\le \frac{\left(\mathrm{Vol}(\bar{D})\right )^C}{(1-C^{\prime}c_0)^{C+1}},\quad \text{if }\prod_{2\le i<j\le \omega}\mathbf{1}\left\{\|\theta_i-\theta_j\|>2\kappa\right\}=1.
\end{align}

 For $1\le t\le C$, we have
   \begin{align}\label{eq:auxi2}
   \begin{split}
   &\mathbb{P}_{\{\theta_i\}_{i=2}^S\sim P}\bigg(\sum_{i=2}^{S}\mathbf{1}_{E_i}=t\bigg) \\
   &\overset{(\text{i})}{=}\binom{S-1}{t}\mathbb{P}_{\{\theta_i\}_{i=2}^S\sim P}\left( \mathbf{1}_{E_i}=0 \text{ for } 2\le i\le S-t, \mathbf{1}_{E_i}=1 \text{ for } S-t+1\le i\le S\right)\\
       &= \binom{S-1}{t}\int_{(\bar{D})^{\otimes (S-1)}} \bigg(\prod_{i=2}^{S-t} \mathbf{1}_{E_i^c}\bigg) \bigg(\prod_{i=S-t+1}^{S} \mathbf{1}_{E_i}\bigg)P(\theta_2,\theta_3,\ldots,\theta_S)\prod_{2\le i\le S}d\theta_i  \\
       &=\binom{S-1}{t}\int_{(\bar{D})^{\otimes (S-t-1)}} \left(\int_{(\bar{D})^{\otimes t}}\bigg(\prod_{i=S-t+1}^{S} \mathbf{1}_{E_i}\bigg)P(d\theta_{S-t+1},\ldots,d\theta_S|\theta_2,\ldots,\theta_{S-t})\right)\\ 
       &\hspace{9cm} \bigg(\prod_{i=2}^{S-t} \mathbf{1}_{E_i^c}\bigg)  P(d\theta_2,\ldots,d\theta_{S-t})\\
       &\overset{(\text{ii})}{\lesssim}\binom{S-1}{t}\kappa^{dt}\left(\frac{\left(\mathrm{Vol}(\bar{D})\right )^{C+1}}{(1-C^{\prime}c_0)^{C+1}}\right)^{2t}   \\ & \hspace{2cm}\times \int_{(\bar{D})^{\otimes (S-t-1)}}  \left(\int_{(\bar{D})^{\otimes t}}\bigg(  \prod_{i=S-t+1}^{S}  \mathbf{1}_{E_i^c}\bigg)P(d\theta_{S-t+1},\ldots,d\theta_S|\theta_2,\ldots,\theta_{S-t}) \right)\\
       &\hspace{2cm} \times \bigg(\prod_{i=2}^{S-t} \mathbf{1}_{E_i^c}\bigg)  P(d\theta_2,\ldots,d\theta_{S-t})\\
       &=\binom{S-1}{t}\kappa^{dt}\left(\frac{\left(\mathrm{Vol}(\bar{D})\right )^{C+1}}{(1-C^{\prime}c_0)^{C+1}}\right)^{2t}\mathbb{P}_{\{\theta_i\}_{i=2}^S\sim P}\bigg(\sum_{i=2}^{S}\mathbf{1}_{E_i}=0\bigg)\\
       & \overset{(\text{iii})}{\lesssim} S^t\kappa^{dt}\left(\frac{\left(\mathrm{Vol}(\bar{D})\right )^{C+1}}{(1-C^{\prime}c_0)^{C+1}}\right)^{2t}\mathbb{P}_{\{\theta_i\}_{i=2}^S\sim P}\bigg(\sum_{i=2}^{S}\mathbf{1}_{E_i}=0\bigg) \\
   &\overset{(\text{iv})}{\lesssim} c_0^t\left(\frac{\left(\mathrm{Vol}(\bar{D})\right )^{C+1}}{(1-C^{\prime}c_0)^{C+1}}\right)^{2t}\mathbb{P}_{\{\theta_i\}_{i=2}^S\sim P}\bigg(\sum_{i=2}^{S}\mathbf{1}_{E_i}=0\bigg)
   \lesssim \mathbb{P}_{\{\theta_i\}_{i=2}^S\sim P}\bigg(\sum_{i=2}^{S}\mathbf{1}_{E_i}=0\bigg),
   \end{split}
   \end{align}
   where (i) follows by symmetry, (iii) follows from $\binom{S-1}{t}\le S^t,$ and (iv) follows from $S\le c_0\kappa^{-d}$. 
   
   Now we establish (ii). Conditioning on $\{\theta_{i}\}_{i=2}^{S-t}\sim P(\theta_2,\ldots,\theta_{S-t})$ and $\prod_{i=2}^{S-t} \mathbf{1}_{E_i^c}=1$, we shall prove
   \begin{align}\label{eq:auxi5}
   \begin{split}
  &\int_{(\bar{D})^{\otimes t}}\bigg(\prod_{i=S-t+1}^{S} \mathbf{1}_{E_i}\bigg)P(d\theta_{S-t+1},\ldots,d\theta_S|\theta_2,\ldots,\theta_{S-t}) \\
  &\lesssim \kappa^{dt}\left(\frac{\left(\mathrm{Vol}(\bar{D})\right )^{C+1}}{(1-C^{\prime}c_0)^{C+1}}\right)^{2t} \int_{(\bar{D})^{\otimes t}}\bigg(\prod_{i=S-t+1}^{S} \mathbf{1}_{E_i^c}\bigg)P(d\theta_{S-t+1},\ldots,d\theta_S|\theta_2,\ldots,\theta_{S-t}). 
  \end{split}
   \end{align}
   First,
    \begin{align}\label{eq:auxi6}
    \begin{split}
    &\int_{(\bar{D})^{\otimes t}}\bigg(\prod_{i=S-t+1}^{S} \mathbf{1}_{E_i}\bigg)P(d\theta_{S-t+1},\ldots,d\theta_S|\theta_2,\ldots,\theta_{S-t})\\
    &=\int_{\bar{D}} \cdots \left(\int_{\bar{D}} \left(\int_{\bar{D}} \mathbf{1}_{E_S}P(d\theta_S|\theta_2,\ldots,\theta_{S-1})\right) \mathbf{1}_{E_{S-1}} P(d\theta_{S-1}|\theta_2,\ldots,\theta_{S-2})\right) \\ 
    &\hspace{7.5cm} \cdots \mathbf{1}_{E_{S-t+1}} P(d\theta_{S-t+1}|\theta_2,\ldots,\theta_{S-t})\\
     &\le \int_{B(a,2\kappa)} \cdots \left(\int_{B(a,2\kappa)} \left(\int_{B(a,2\kappa)} P(d\theta_S|\theta_2,\ldots,\theta_{S-1})\right)  P(d\theta_{S-1}|\theta_2,\ldots,\theta_{S-2})\right) \\
     &\hspace{7.5cm}\cdots  P(d\theta_{S-t+1}|\theta_2,\ldots,\theta_{S-t})\\
    &\overset{\eqref{eq:conditional_bound}}{\le} \left(\frac{\left(\mathrm{Vol}(\bar{D})\right )^C}{(1-C^{\prime}c_0)^{C+1}}\right)^t \mathrm{Vol}\bigl(B(a,2\kappa)\bigr)^t 
    \lesssim \left(\frac{\left(\mathrm{Vol}(\bar{D})\right )^C}{(1-C^{\prime}c_0)^{C+1}}\right)^t \kappa^{dt}.
    \end{split}
    \end{align}
    Second,
    \begin{align}\label{eq:auxi7}
    \begin{split}
        &\int_{(\bar{D})^{\otimes t}}\bigg(\prod_{i=S-t+1}^{S} \mathbf{1}_{E_i^{c}}\bigg)P(d\theta_{S-t+1},\ldots,d\theta_S|\theta_2,\ldots,\theta_{S-t})\\
        &=\int_{\bar{D}} \cdots \left(\int_{\bar{D}} \left(\int_{\bar{D}} \mathbf{1}_{E_S^c}P(d\theta_S|\theta_2,\ldots,\theta_{S-1})\right) \mathbf{1}_{E_{S-1}^c} P(d\theta_{S-1}|\theta_2,\ldots,\theta_{S-2})\right)\\
        &\hspace{8cm}\cdots \mathbf{1}_{E_{S-t+1}^c} P(d\theta_{S-t+1}|\theta_2,\ldots,\theta_{S-t})\\
        &=\int_{\bar{D}\backslash B(a,2\kappa)} \hspace{-0.3cm} \cdots \left(\int_{\bar{D}\backslash B(a,2\kappa)} \left(\int_{\bar{D}\backslash B(a,2\kappa)}P(d\theta_S|\theta_2,\ldots,\theta_{S-1})\right)  P(d\theta_{S-1}|\theta_2,\ldots,\theta_{S-2})\right)\\
        &\hspace{8cm}\cdots P(d\theta_{S-t+1}|\theta_2,\ldots,\theta_{S-t})\\
        &\overset{\eqref{eq:conditional_bound}+\eqref{eq:volume_bound}}{\ge} \prod_{i=S-t+1}^{S} \frac{(1-C^{\prime}c_0)^{C}}{\left(\mathrm{Vol}(\bar{D})\right)^{C+1}} (1-C^{\prime}c_0) 
        =\left(\frac{(1-C^{\prime}c_0)^{C+1}}{\left(\mathrm{Vol}(\bar{D})\right)^{C+1}}\right)^{t}.
        \end{split}
    \end{align}
    Combining \eqref{eq:auxi6} and \eqref{eq:auxi7} implies \eqref{eq:auxi5}. This completes the proof of \eqref{eq:auxi2} and \textbf{Step 3}.
    \end{proof}

\begin{lemma}\label{lemma:disjoint}
    Let $\{\CC_\ell\}_{\ell=1}^{S}$ be a set of kernel integral operators on $L^2(D)$ with kernel functions $\{k_\ell(\cdot,\cdot)\}_{\ell=1}^{S}$. For $1\le \ell\le S$, we denote the support of $k_\ell(\cdot,\cdot)$ by $T_\ell\times T_\ell$. If $\{T_\ell\}_{\ell=1}^{S}\subseteq D$ are disjoint, then
    \[
    \Big\|\sum_{\ell=1}^{S}\CC_{\ell}\Big\|=\max_{1\le \ell\le S}\|\CC_{\ell}\|.
    \]
\end{lemma}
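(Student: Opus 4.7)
The plan is to exploit the block-diagonal structure induced by the disjoint supports. Concretely, because $k_\ell$ vanishes outside $T_\ell \times T_\ell$, the operator $\CC_\ell$ factors as $\CC_\ell = P_\ell \CC_\ell P_\ell$, where $P_\ell$ denotes the orthogonal projection of $L^2(D)$ onto the closed subspace $L^2(T_\ell) := \{\psi \in L^2(D) : \psi = 0 \text{ a.e.\ on } D \setminus T_\ell\}$. Pairwise disjointness of $\{T_\ell\}_{\ell=1}^S$ implies that the subspaces $L^2(T_\ell)$ are mutually orthogonal in $L^2(D)$, and hence $P_\ell P_{\ell'} = 0$ for $\ell \neq \ell'$. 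Moreover, for any $\psi \in L^2(D)$, the image $\CC_\ell \psi$ lies in $L^2(T_\ell)$.

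For the upper bound, I would take any $\psi \in L^2(D)$ with $\|\psi\| = 1$. Since the vectors $\{\CC_\ell \psi\}_{\ell=1}^S$ are pairwise orthogonal, the Pythagorean identity gives
\[
\Big\| \sum_{\ell=1}^{S} \CC_\ell \psi \Big\|^2 = \sum_{\ell=1}^{S} \|\CC_\ell \psi\|^2 = \sum_{\ell=1}^{S} \|\CC_\ell P_\ell \psi\|^2 \le \Big(\max_{1\le \ell\le S}\|\CC_\ell\|\Big)^{2} \sum_{\ell=1}^{S} \|P_\ell \psi\|^2 \le \Big(\max_{1\le \ell\le S}\|\CC_\ell\|\Big)^{2},
\]
where the final inequality is Bessel's inequality across the mutually orthogonal $L^2(T_\ell)$. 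Taking the supremum over $\psi$ yields $\bigl\|\sum_\ell \CC_\ell\bigr\| \le \max_\ell \|\CC_\ell\|$.

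For the matching lower bound, I would fix an arbitrary $\ell_0 \in \{1,\ldots,S\}$ and restrict attention to unit-norm test functions $\psi \in L^2(T_{\ell_0})$. Because $\psi$ vanishes on $T_\ell$ for every $\ell \neq \ell_0$, we have $\CC_\ell \psi = 0$ for $\ell \neq \ell_0$, so $\sum_\ell \CC_\ell \psi = \CC_{\ell_0} \psi$, and therefore $\bigl\|\sum_\ell \CC_\ell\bigr\| \ge \sup_{\psi \in L^2(T_{\ell_0}),\ \|\psi\|=1} \|\CC_{\ell_0} \psi\|$. To conclude, I would verify that this restricted supremum equals the full operator norm $\|\CC_{\ell_0}\|$: this follows from the identity $\CC_{\ell_0} = \CC_{\ell_0} P_{\ell_0}$ and the contractivity $\|P_{\ell_0} \psi\| \le \|\psi\|$, which together show that extending the supremum to all of $L^2(D)$ does not enlarge it. Maximizing over $\ell_0$ then gives the reverse inequality.

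I do not expect any significant obstacle; this is essentially a standard orthogonal-decomposition argument. The only point requiring mild care is the identification $\|\CC_{\ell_0}\|_{L^2(D)\to L^2(D)} = \|\CC_{\ell_0}\|_{L^2(T_{\ell_0})\to L^2(T_{\ell_0})}$, which is a direct consequence of the support hypothesis on $k_{\ell_0}$.
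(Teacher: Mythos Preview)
Your proof is correct and follows essentially the same approach as the paper's: both use the Pythagorean identity for the orthogonal images $\CC_\ell\psi\in L^2(T_\ell)$, bound each term via $\CC_\ell\psi=\CC_\ell P_\ell\psi$, sum using $\sum_\ell\|P_\ell\psi\|^2\le\|\psi\|^2$, and handle the lower bound by restricting to test functions supported in a single $T_{\ell_0}$. The only cosmetic difference is that the paper writes everything in terms of explicit kernel integrals rather than projections, and for the lower bound it invokes a maximizing $\psi$ directly (implicitly using compactness) whereas your factorization $\CC_{\ell_0}=\CC_{\ell_0}P_{\ell_0}$ avoids that minor technicality.
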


\begin{proof} 
For any $\psi\in L^2(D)$ with $\|\psi\|_{L^2(D)}=1$,
\begin{align*}
\bigg\|\bigg(\sum_{\ell=1}^{S}\CC_{\ell}\bigg)\psi\bigg\|_{L^2(D)}^2 &=\bigg\|\int_{D}\sum_{\ell=1}^{S}k_{\ell}(x,y)\psi(y)dy\bigg\|_{L^2(D)}^2
=\bigg\|\sum_{\ell=1}^{S}\int_{D}k_{\ell}(x,y)\psi(y)dy\bigg\|_{L^2(D)}^2\\
&\overset{\text{(i)}}{=}\sum_{\ell=1}^{S}\bigg\|\int_{D}k_{\ell}(x,y)\psi(y)dy\bigg\|_{L^2(D)}^2
\overset{\text{(ii)}}{=}\sum_{\ell=1}^{S}\bigg\|\int_{T_{\ell}}k_{\ell}(x,y)\psi(y)dy\bigg\|_{L^2(D)}^2\\
&\le \sum_{\ell=1}^{S} \|\CC_{\ell}\|^2 \|\psi\|_{L^2(T_{\ell})}^2
\le \bigg(\max_{1\le \ell\le S} \|\CC_{\ell}\|^2\bigg)\bigg(\sum_{\ell=1}^{S}\|\psi\|_{L^2(T_\ell)}^2\bigg)\\
&\overset{\text{(iii)}}{\le}\max_{1\le \ell\le S} \|\CC_{\ell}\|^2,
\end{align*}
where (i) follows from the fact that that $\big\{\int_{D}k_{\ell}(\cdot,y)\psi(y)dy\big\}_{\ell=1}^{S}$ have disjoint supports, (ii) follows by $k_{\ell}(x,y)=0$ if $y\notin T_{\ell}$, and (iii) follows by $\sum_{\ell=1}^{S}\|\psi\|_{L^2(T_\ell)}^2\le \|\psi\|_{L^2(D)}^2=1$. 

Therefore, $\Big\|\sum_{\ell=1}^{S}\CC_{\ell}\Big\|\le \max_{1\le \ell\le S}\|\CC_{\ell}\|$. Let $\ell^{\prime}:=\arg\max_{1 \le \ell\le S}\|\CC_{\ell}\|$, by taking $\psi\in L^2(D)$ with support $T_{\ell^{\prime}}$ and satisfying $\|\CC_{\ell^{\prime}}\psi\|=\|\CC_{\ell^{\prime}}\|\|\psi\|$, we conclude that the equality holds. 
\end{proof}

\subsection{Banded covariance operators: Lower bound}\label{app:lowerbandable}

\begin{proof}[Proof of Proposition~\ref{prop:reduction}]
    (a)
    Observe that, for any $\psi\in L^2(D)$,
\[
\langle\psi,\CC_{\Sigma}\psi\rangle_{L^2(D)}=\int_{D\times D} k_{\Sigma}(x,y) \psi(x)\psi(y) \, dxdy = \langle  \bar{\psi}, \Sigma \bar{\psi} \rangle \ge 0,
\]
where $\bar{\psi}=(\bar{\psi}_1,\ldots,\bar{\psi}_M)\in \R^M$ and $\bar{\psi}_i=\int_{I_i} \psi(x)dx$. We see that the operator $\CC_{\Sigma}$ is \textit{positive semi-definite}. Moreover, $\CC_{\Sigma}$ is \textit{trace-class}, since  
\[
\mathrm{Tr}( \CC_{\Sigma}) = \int_{D}k_{\Sigma}(x,x) \, dx=\sum_{i=1}^M \Sigma_{ii}\mathrm{Vol}(I_i)= \frac{1}{M}\sum_{i=1}^{M}\Sigma_{ii} <\infty.
\]

(b) For $\psi\in L^2(D)$ with $\|\psi\|_{L^2(D)}=1$,
\[
\|\bar{\psi}\|^2=\sum_{i=1}^{M}(\bar{\psi}_i)^2=\sum_{i=1}^{M}\bigg(\int_{I_i} \psi(x)dx\bigg)^2\le \sum_{i=1}^{M}\bigg(\int_{I_i}\psi(x)^2 dx \bigg)\mathrm{Vol}(I_i)=\frac{1}{M}.
\]
We use an equivalent characterization of the operator norm to write 
\begin{align*}
\|\CC_{\Sigma}\|
=\sup_{\|\psi\|_{L^2(D)}=1}\langle\psi,\CC_{\Sigma}\psi\rangle_{L^2(D)}
=\sup_{\|\psi\|_{L^2(D)}=1} \langle  \bar{\psi}, \Sigma \bar{\psi} \rangle\le \sup_{\|v\|\le \frac{1}{\sqrt{M}}}\langle  v, \Sigma v\rangle
=\frac{1}{M}\|\Sigma\|.
\end{align*}
On the other hand, for any unit vector $v=(v_1,v_2,\ldots, v_M)\in \R^M $, we set $\psi_v(x)=\sqrt{M}\sum_{i=1}^M v_i\mathbf{1}\{x\in I_i\}$. Observe that $\|\psi_v\|_{L^2(D)}=1$, so
\[
\|\CC_{\Sigma}\|=\sup_{\|\psi\|_{L^2(D)}=1}\langle\psi,\CC_{\Sigma}\psi\rangle_{L^2(D)}\ge \sup_{\|v\|=1} \langle \psi_v,\CC_{\Sigma}\psi_v  \rangle_{L^2(D)}=\sup_{\|v\|=1}\frac{1}{M}\langle  v, \Sigma v \rangle=\frac{1}{M}\|\Sigma\|,
\]
as desired.

(c)  We write
\begin{align}\label{eq:reduction_aux1}
  \inf_{\hat{\CC}}\sup_{\CC\in \F^{*}} \E \|\hat{\CC}-\CC\|= \inf_{\hat{k}}\sup_{\Sigma\in \F} \E \sup_{\|\psi\|_{L^2(D)}=1} \Big\|\int_D \left(\hat{k}(\cdot,y)-k_{\Sigma}(\cdot,y)\right)\psi(y)dy\Big\|_{L^2(D)}.
\end{align}
 For any unit vector $v=(v_1,v_2,\ldots, v_M)\in \R^M $, we set $\psi_v(x)=\sqrt{M}\sum_{i=1}^M v_i\mathbf{1}\{x\in I_i\}$. Observe that $\|\psi_v\|_{L^2(D)}=1$. We then restrict the supremum over all $\|\psi\|_{L^2(D)}=1$ in the lower bound \eqref{eq:reduction_aux1} to be a supremum over functions of the form $\psi_v$, which yields
\begin{align*}
\inf_{\hat{\CC}}\sup_{\CC\in \F^{*}} \E \|\hat{\CC}-\CC\|\geq \inf_{\hat{k}} \hspace{-0.1cm} \sup_{\Sigma\in\F} \E \sup_{\|v\|=1} \Big\|\int_D \hspace{-0.1cm}  \left(\hat{k}(\cdot,y)-k_{\Sigma}(\cdot,y) \right)\sqrt{M}\sum_{j=1}^M v_j\mathbf{1}\{y\in I_j\}dy\Big\|_{L^2(D)}.
\end{align*}
To simplify this expression, we observe the following:
\begin{align*}
    &\Big\|\int_D \left(\hat{k}(\cdot,y)-k_{\Sigma}(\cdot,y) \right)\sqrt{M}\sum_{j=1}^M v_j\mathbf{1}\{y\in I_j\}dy\Big\|^2_{L^2(D)} \\
    &=\int_{D} \hspace{-0.1cm} \bigg(\int_D \bigg(\hat{k}(x,y)-\sum_{i,j=1}^{M} \Sigma_{ij}\mathbf{1}\{x\in I_i\} \mathbf{1}\{y\in I_j\} \bigg)\sqrt{M}\sum_{j=1}^M v_j\mathbf{1}\{y\in I_j\}dy\bigg)^2 \hspace{-0.1cm} dx \\
    &=\sum_{i=1}^{M}\int_{I_i}\bigg(\int_D \bigg(\hat{k}(x,y)-\sum_{j=1}^{M} \Sigma_{ij} \mathbf{1}\{y\in I_j\} \bigg)\sqrt{M}\sum_{j=1}^M v_j\mathbf{1}\{y\in I_j\}dy\bigg)^2  \hspace{-0.1cm}  dx\\
    &=M\sum_{i=1}^{M}\int_{I_i} \bigg(\sum_{j=1}^{M}v_j\Big(\int_{I_j}\hat{k}(x,y)dy\Big)-\Big(\frac{1}{M}\sum_{j=1}^{M}\Sigma_{ij}v_j\Big) \bigg)^2  dx\\
    &\overset{\text{(i)}}{\ge} M\sum_{i=1}^{M} \frac{1}{\mathrm{Vol}(I_i)}\bigg(\int_{I_i}\Big(\sum_{j=1}^{M}v_j\int_{I_j}\hat{k}(x,y)dy\Big)-\Big(\frac{1}{M}\sum_{j=1}^{M}\Sigma_{ij}v_j\Big)dx \bigg)^2\\
    &= M^2\sum_{i=1}^{M}\bigg(\Big(\sum_{j=1}^{M}v_j \int_{I_i\times I_j}\hat{k}(x,y)dxdy\Big)-\Big(\frac{1}{M^2}\sum_{j=1}^{M}\Sigma_{ij}v_j\Big) \bigg)^2\\
    &=\frac{1}{M^2}\|(\hat{k}_M-\Sigma) v\|^2,
\end{align*}
 where (i) follows by Cauchy-Schwarz inequality, and we have defined  $$\hat{k}_M:=\left(M^2\int_{I_i\times I_j}\hat{k}(x,y)dxdy \right)_{1\leq i,j\leq M}$$ in the last step. Consequently, we have that 
\begin{align*}
    \inf_{\hat{\CC}}\sup_{\CC\in \F^{*}} \E \|\hat{\CC}-\CC\| \ge \inf_{\hat{k}}\sup_{\Sigma\in\F}\E \sup_{\|v\|=1}\frac{1}{M}\|(\hat{k}_{M}-\Sigma)v\|=\frac{1}{M}\inf_{\hat{k}}\sup_{\Sigma\in\F}\E \|\hat{k}_{M}-\Sigma\|,
\end{align*}
where $\inf_{\hat{k}}$ ranges over all measurable functions of $\{u_n(\cdot)\}_{n=1}^N$.

We note that the Gaussian random function $u_n(\cdot)\sim \mathrm{GP}(0,\CC_{\Sigma})$ is almost surely piecewise constant and can be written as 
\begin{align*}
u_n(x)=\sum_{i=1}^M X_{ni}\mathbf{1}\{x\in I_i\},
\end{align*}
where the $M$-dimensional vector $X_n=(X_{n1},\ldots,X_{nM})\sim \mcN(0,\Sigma)$. Therefore, $\hat{k}_M$ is simply a function of the multivariate Gaussians $X_1,X_2,\ldots,X_N\iid \mcN(0,\Sigma)$, which yields that 
\[
\inf_{\hat{\CC}}\sup_{\CC\in \F^{*}} \E \|\hat{\CC}-\CC\|\ge \frac{1}{M}\inf_{\hat{k}}\sup_{\Sigma\in\F}\E \|\hat{k}_{M}-\Sigma\|= \frac{1}{M}\inf_{\hat{\Sigma}}\sup_{\Sigma\in \F} \E \|\hat{\Sigma}-\Sigma\|.
\]
Combining the argument above with $\|\CC_{\Sigma}\|=\frac{1}{M}\|\Sigma\|$ in (b), we conclude that
\[
\inf_{\hat{\CC}}\sup_{\CC\in \F^{*}} \frac{\E\| \hat{\CC}-\CC\|}{\|\CC\|}\ge \inf_{\hat{\Sigma}}\sup_{\Sigma\in \F}\frac{\E\|\hat{\Sigma}-\Sigma\|}{\|\Sigma\|}.  \qedhere
\]
\end{proof}

\begin{lemma}\label{lemma:banding lower bound inclusion 1}
    If $N>\log r$, then $\F_1(r,\{\nu_m\})\subseteq \F(r,\{\nu_m\})$.
\end{lemma}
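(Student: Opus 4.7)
The plan is to verify, for each $\Sigma \in \F_0$, that the operator $\CC_\Sigma$ produced by the construction in Proposition \ref{prop:reduction} satisfies the three requirements defining $\F(r, \{\nu_m\})$: the magnitude bound $\sup_{x} k(x,x) \lesssim \mathrm{Tr}(\CC)$, the effective dimension bound $r(\CC) \le r$, and the tail integral bound for every $m \in \N$. Each $\Sigma \in \F_0$ is diagonal with entries in $\{w,\, w(1 - \sqrt{\tau \log r/N})\}$; the constraint $N > \log r$ together with a sufficiently small absolute constant $\tau > 0$ ensures $\sqrt{\tau \log r/N} < 1/2$, so all diagonal entries lie in $[w/2, w]$. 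Consequently $\Sigma$ is positive semi-definite with $\|\Sigma\| = w$ and $\mathrm{Tr}(\Sigma) \in [rw/2, rw]$.

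For the effective dimension and magnitude conditions, Proposition \ref{prop:reduction} yields $\|\CC_\Sigma\| = w/r$, and direct computation gives $\mathrm{Tr}(\CC_\Sigma) = \sum_i \Sigma_{ii} \mathrm{Vol}(I_i) = \mathrm{Tr}(\Sigma)/r \in [w/2, w]$. Thus $r(\CC_\Sigma) = \mathrm{Tr}(\Sigma)/w \le r$. Since each $x \in D$ belongs to a unique cell $I_{i(x)}$, the kernel on the diagonal evaluates to $k_\Sigma(x,x) = \Sigma_{i(x), i(x)} \le w$, and combined with $\mathrm{Tr}(\CC_\Sigma) \ge w/2$ this gives $\sup_x k_\Sigma(x,x) \le 2\, \mathrm{Tr}(\CC_\Sigma)$, which is the magnitude bound.

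The tail condition is the most substantive verification. Because $\Sigma$ is diagonal, $k_\Sigma(x,y) = \Sigma_{i(x), i(x)}\, \mathbf{1}\{y \in I_{i(x)}\}$, so the integrand $|k_\Sigma(x, \cdot)|$ is supported in $I_{i(x)}$, a cube of volume $1/r$ and Euclidean diameter $\sqrt{d}\,r^{-1/d}$. For $m = 1$, bounding the integral by the supremum of $k_\Sigma$ times the volume of $I_{i(x)}$ gives at most $w/r = \|\CC_\Sigma\| = \|\CC_\Sigma\|\, \nu_1$, since $\nu_1 = 1$. For $m \ge 2$ in physical dimension $d \le 3$, the inequality $r(\CC_\Sigma) \le r$ yields
\[
m\, r(\CC_\Sigma)^{-1/d} \ge 2\,r^{-1/d} > \sqrt{d}\,r^{-1/d} = \mathrm{diam}(I_{i(x)}),
\]
so the set $\{y : \|x - y\| > m\, r(\CC_\Sigma)^{-1/d}\}$ is disjoint from $I_{i(x)}$ and the integral vanishes. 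Hence the tail bound $\lesssim \|\CC_\Sigma\|\, \nu_m$ holds for every $m \in \N$.

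Assembling these three checks shows $\CC_\Sigma \in \F(r, \{\nu_m\})$ for every $\Sigma \in \F_0$, which is exactly the containment claimed. The only points that require care are choosing $\tau$ small enough that the perturbed diagonal remains in $[w/2, w]$ under the hypothesis $N > \log r$, and exploiting the block-diagonal structure of $\Sigma$ together with the cell-diameter estimate to zero out the tail integrals for $m \ge 2$; neither presents a real obstacle.
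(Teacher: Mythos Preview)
Your proposal is correct and follows essentially the same approach as the paper's proof: verify positive semi-definiteness, compute $\mathrm{Tr}(\CC_\Sigma)$ and $\|\CC_\Sigma\|$ via Proposition~\ref{prop:reduction} to check the magnitude and effective-dimension conditions, and then exploit the diagonality of $\Sigma$ to show the tail integral is supported in a single cell $I_{i(x)}$ and hence vanishes once $m$ is large enough. The paper phrases the last step with a generic constant $C_1$ (so that the integral is $0$ for $m>C_1$ and bounded by $\|\CC\|$ for $m\le C_1$), whereas you make the threshold explicit as $m\ge 2$ under $d\le 3$; since the paper treats $d$ as a fixed small constant this amounts to the same thing. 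One cosmetic point: you take the cells to be cubes with diameter exactly $\sqrt{d}\,r^{-1/d}$, while the construction only guarantees $\mathrm{diam}(I_i)\asymp r^{-1/d}$, so in general the threshold on $m$ should be stated as a $d$-dependent constant rather than $2$---but this does not change the argument.
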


\begin{proof}[Proof of Lemma \ref{lemma:banding lower bound inclusion 1}]
            
\begin{enumerate}[label=(\alph*)]
\item $\Sigma_{\ell}\in \F_0$ is positive semi-definite (taking $\tau$ sufficiently small).
\item For $\CC\in \F_1(r,\{\nu_m\})$ and $\Sigma\in \F_0$, $\mathrm{Tr}(\CC)=\int_{D} k(x,x)dx=\frac{1}{r}\sum_{j=1}^{r}\Sigma_{jj}\asymp \sup_{1\le j\le r}\Sigma_{jj}= \sup_{x\in D}k(x,x)$.
\item For $\CC\in \F_1(r,\{\nu_m\})$ and $\Sigma\in \F_0$, $r(\CC)=\frac{\mathrm{Tr(\CC)}}{\|\CC\|}\overset{\text{(i)}}{=}\frac{\frac{1}{r}\sum_{j=1}^{r}\Sigma_{jj}}{\frac{1}{r}\|\Sigma\|}=\frac{w r-w\sqrt{\frac{\tau}{N}\log r}}{w}=r-\sqrt{\frac{\tau}{N}\log r}\le r$, where (i) follows by Proposition~\ref{prop:reduction} \ref{eq:reductionb}. 
\item Since $\Sigma\in \F_0$ is diagonal,
\begin{align}\label{eq:F1_aux1}
\sup_{x\in D} \int_{\{y: \|x-y\| > Mr(\CC)^{-1/d}\}} |k(x,y)| dy &=\sup_{x\in D}\int_{\{y: \|x-y\| > Mr(\CC)^{-1/d}\}} \sum_{i=1}^{r}\Sigma_{ii}\mathbf{1}\{x,y\in I_i\} dy\nonumber\\
&\le \big(\max_{1\le i\le r}\Sigma_{ii}\big)\cdot \mathrm{Vol}(I_i)=\frac{w}{r}=\|\CC\|.
\end{align}

Notice that $r(\CC)=r-\sqrt{\frac{\tau}{N}\log r}\asymp r$ and $\mathrm{diam}(I_i)\asymp r^{-1/d}$. Therefore, there exists an absolute constant $C_1>0$ such that when $m>C_1$, for any $x\in D$ (suppose $x\in I_x$), $I_x\subseteq B(x,m r(\CC)^{-1/d})$, and so
\begin{align}\label{eq:F1_aux2}
\sup_{x\in D} \int_{\{y: \|x-y\| > m r(\CC)^{-1/d}\}} |k(x,y)| dy &=\sup_{x\in D}\int_{\{y: \|x-y\| > m r(\CC)^{-1/d}\}} \sum_{i=1}^{r}\Sigma_{ii}\mathbf{1}\{x,y\in I_i\} dy\nonumber\\
&\le w \sup_{x\in D}\int_{D\backslash B(x,m r(\CC)^{-1/d}) }\mathbf{1}\{x,y\in I_{x}\}dy\overset{\text{(i)}}{=}0,
\end{align}
where (i) follows from $I_{x}\cap \left(D\backslash B(x,m r(\CC)^{-1/d}) \right)=\emptyset$. Combining \eqref{eq:F1_aux1} and \eqref{eq:F1_aux2} gives that, for $\CC\in \F_1(r,\{\nu_m\})$,
\[
\sup_{x\in D} \int_{\{y: \|x-y\| > m r(\CC)^{-1/d}\}} |k(x,y)| dy\lesssim \|\CC\|\nu_m,\quad \forall \ m\in \N. \qedhere
\]
\end{enumerate}
\end{proof}

\begin{lemma}[Lower bound over $\F_1(r,\{\nu_m\})$]\label{lemma:lower_bound1}
Suppose $N> \log r>0$. The minimax risk for estimating the covariance operator over $\F_1(r,\{\nu_m\})$ under the operator norm satisfies
\[
\inf_{\hat{\CC}}\sup_{\CC\in \F_1(r,\{\nu_m\})}\frac{\mathbb{E} \| \widehat{\mathcal{C}} - \mathcal{C} \|}{\|\mathcal{C} \| } \gtrsim \sqrt{\frac{\log r}{N}}.
\]
\end{lemma}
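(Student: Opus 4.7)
The plan is to use the lower bound reduction of Proposition~\ref{prop:reduction}(c), equation~\eqref{eq:reduction2}, to convert the operator-level minimax relative-error bound over $\F_1(r,\{\nu_m\})$ into a matrix-level minimax relative-error bound over the finite class $\F_0 \subset \R^{r\times r}$, and then establish the matrix bound by a standard Fano argument on the $r$ single-diagonal perturbations $\{\Sigma_1,\ldots,\Sigma_r\}$ of $w I_r$.

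Concretely, write $\delta := w\sqrt{\tau \log r / N}$, so that $\Sigma_\ell = wI_r - \delta\, e_\ell e_\ell^\top$ for $\ell \ge 1$. For $\ell \ne \ell'$, the difference $\Sigma_\ell - \Sigma_{\ell'}$ has a single entry $+\delta$ and a single entry $-\delta$ on the diagonal, so $\|\Sigma_\ell - \Sigma_{\ell'}\| = \delta$, while $\|\Sigma_\ell\| = w$. The separation in relative operator norm is therefore $\asymp \sqrt{\log r / N}$. Next, since $\Sigma_\ell, \Sigma_{\ell'}$ are diagonal and agree except at two entries, $\det \Sigma_\ell = \det \Sigma_{\ell'}$ and
\[
\mathrm{KL}\bigl(\mcN(0,\Sigma_\ell)^{\otimes N} \,\big\|\, \mcN(0,\Sigma_{\ell'})^{\otimes N}\bigr) = \frac{N}{2}\Bigl[\mathrm{tr}(\Sigma_{\ell'}^{-1}\Sigma_\ell) - r\Bigr] = \frac{N\delta^2}{2w(w-\delta)} \,\le\, \tau \log r
\]
provided $\tau$ is small enough that $\delta \le w/2$, which holds under the assumption $N > \log r$. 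Choosing $\tau$ a sufficiently small absolute constant makes the pairwise KL bounded by $c \log r$ with $c < 1/8$, say.

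Fano's inequality then yields, with $\psi(\cdot)$ denoting any test function valued in $\{1,\ldots,r\}$,
\[
\inf_{\psi}\max_{1 \le \ell \le r}\P_\ell\bigl(\psi \ne \ell\bigr) \,\ge\, 1 - \frac{c \log r + \log 2}{\log r} \,\ge\, \tfrac12,
\]
for $r$ large enough. The standard reduction from estimation to testing (set $\psi = \arg\min_\ell \|\hat{\Sigma} - \Sigma_\ell\|$ and use the triangle inequality together with the pairwise separation $\delta$) converts this into a Markov-type lower bound
\[
\inf_{\hat\Sigma}\sup_{\Sigma \in \F_0}\E\,\|\hat\Sigma - \Sigma\| \,\gtrsim\, \delta \,\asymp\, w\,\sqrt{\frac{\log r}{N}},
\]
and hence, dividing by $\|\Sigma\| = w$,
\[
\inf_{\hat\Sigma}\sup_{\Sigma \in \F_0}\frac{\E\,\|\hat\Sigma - \Sigma\|}{\|\Sigma\|} \,\gtrsim\, \sqrt{\frac{\log r}{N}}.
\]
Applying \eqref{eq:reduction2} from Proposition~\ref{prop:reduction} with $M = r$ and $\F = \F_0$ transfers this bound to $\F_1(r,\{\nu_m\})$, completing the proof.

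The only genuinely delicate step is the KL calculation together with the constant management that ensures the Fano denominator $\log r$ dominates; this is why Lemma~\ref{lemma:banding lower bound inclusion 1} and the present lemma both require the regime $N > \log r$, which guarantees $\delta \le w/2$ and hence a clean asymptotic $\mathrm{KL} \lesssim \tau \log r$. The rest is routine Fano-based minimax machinery combined with the reduction already established in Proposition~\ref{prop:reduction}.
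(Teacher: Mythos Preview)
Your argument is correct. The reduction step via Proposition~\ref{prop:reduction}\eqref{eq:reduction2} is identical to the paper's, but the matrix-level lower bound is established differently: the paper invokes Le Cam's two-point method, comparing $\Sigma_0 = wI_r$ against the uniform mixture of $\Sigma_1,\ldots,\Sigma_r$ (exactly the argument in \cite[Section~3.2.2]{cai2010optimal}, with the observation that total variation is scale-invariant so the prefactor $w$ is immaterial), whereas you use Fano's inequality directly on the $r$ single-spike hypotheses $\{\Sigma_\ell\}_{\ell=1}^r$. Your pairwise KL computation exploits the diagonal structure cleanly and yields the required $O(\tau\log r)$ bound, after which Fano plus the standard testing-to-estimation reduction finishes the job. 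Both routes are textbook; your Fano argument is arguably more self-contained here because the diagonal matrices make the pairwise KL trivial to compute exactly, while the paper's Le Cam route has the advantage of reusing verbatim a calculation already in the literature. Either way the constant management (choosing $\tau$ small and using $N>\log r$ to ensure $\delta\le w/2$) is the only place requiring any care, and you handle it correctly.
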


\begin{proof}[Proof of Lemma \ref{lemma:lower_bound1}]
We can assume without loss of generality that $r>1$ is an integer (otherwise replace $r$ with $\lceil r \rceil$). According to our construction of $\F_1(r,\{\nu_m\})$ and the inequality \eqref{eq:reduction2} in Proposition~\ref{prop:reduction}, we have the lower bound reduction
\[
\inf_{\hat{\CC}}\sup_{\CC\in \F_1(r,\{\nu_m\})}\frac{\mathbb{E} \| \widehat{\mathcal{C}} - \mathcal{C} \|}{\|\mathcal{C} \| }\ge \inf_{\hat{\Sigma}}\sup_{\Sigma\in \F_0}\frac{\mathbb{E} \| \widehat{\Sigma} - \Sigma \|}{\|\Sigma \| }.
\]
The desired lower bound follows using the same argument as in \cite[Section 3.2.2]{cai2010optimal} (Le Cam's method), noticing that the dimension of the matrix is $r$, $\|\Sigma\|\asymp w$ for every $\Sigma\in \F_0$, $\|\Sigma_0-\Sigma_\ell\|=w \sqrt{\frac{\tau}{N}\log r}$,  and the total variation distance is invariant with respect to scaling transformations, see e.g. \cite{devroye2018total}. 
\end{proof}

\begin{lemma}\label{lemma:banding lower bound inclusion 2}
    If $N>\log r$ and $r>m_*^d$, then $\F_2(r,\{\nu_m\})\subseteq \F(r,\{\nu_m\})$.
\end{lemma}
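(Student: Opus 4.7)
The plan is to verify the three defining conditions of $\F(r,\{\nu_m\})$ in \eqref{eq:continuous banded class} for every $\CC_\theta \in \F_2(r,\{\nu_m\})$. The first condition $\sup_x k_\theta(x,x)\lesssim \mathrm{Tr}(\CC_\theta)$ is immediate once one notices that $x \notin T_{x, 2K}$ for every $x \in D$: the lower corner $(1+\lceil S x_i\rceil)/S$ strictly exceeds $x_i$ in each coordinate, so the two perturbation terms vanish on the diagonal, leaving $k_\theta(x,x) = 1$ and hence $\mathrm{Tr}(\CC_\theta) = 1 = \sup_x k_\theta(x,x)$.

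For the bound $r(\CC_\theta) \le r$, I would invoke the matrix representation of Proposition~\ref{prop:reduction}: since $k_\theta$ is piecewise constant on the product partition $\{I_\ell \times I_{\ell'}\}$, the associated $\Sigma^{(\theta)} \in \R^{r\times r}$ (with $r = S^d$) satisfies $\|\CC_\theta\| = \|\Sigma^{(\theta)}\|/r$. Writing $\Sigma^{(\theta)} = I + E$, the perturbation $E$ is entrywise nonnegative with entries at most $\tau h_N = \tau K^{-d}\sqrt{m_*^d/N}$ and at most $2(2K)^d$ nonzeros per row by the bandedness of the $T_{\cdot, 2K}$ supports. Combining Gershgorin with $m_*^d \le 2^d N$ from Lemma~\ref{lemma:basic_relation}(A) gives $\|E\| \le \tau\, 2^{3d/2+1} < 1$ whenever $\tau < 4^{-d-1}$, so $\Sigma^{(\theta)} \succ 0$ (confirming $\CC_\theta$ is a genuine covariance operator) and $\|\Sigma^{(\theta)}\| \le 3/2$. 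Crucially, positive semi-definiteness with unit diagonal forces $\|\Sigma^{(\theta)}\| \ge \max_i \Sigma^{(\theta)}_{ii} = 1$, so $\|\Sigma^{(\theta)}\| \in [1,3/2]$ and $r(\CC_\theta) = r/\|\Sigma^{(\theta)}\| \in [2r/3, r]$.

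The main step is the banded tail condition. The support of $k_\theta(x, \cdot)$ is contained in $\{y: \|y-x\|_\infty \le 2K/S\}$, hence in the Euclidean ball of radius $\sqrt{d}\cdot 2K/S = (m_*-1)/S$. Using $r(\CC_\theta)^{-1/d} = \|\Sigma^{(\theta)}\|^{1/d}/S \ge 1/S$ from the previous paragraph, the region $\{\|y-x\|_2 > m\, r(\CC_\theta)^{-1/d}\}$ excludes this support entirely for every $m \ge m_*$, and the integral vanishes. For $m \le m_*-1$, a direct volume bound gives integral $\lesssim 1/r + \tau h_N\bigl((2K)^d + K^d\bigr)/S^d \lesssim \sqrt{m_*^d/N}/r \asymp \|\CC_\theta\|\sqrt{m_*^d/N}$, and monotonicity of $\{\nu_m\}$ together with Definition~\ref{def:1} yields $\nu_m \ge \nu_{m_*-1} > \sqrt{(m_*-1)^d/N} \asymp \sqrt{m_*^d/N}$ (the last comparison using $m_* \ge 2$ via Lemma~\ref{lemma:basic_relation}(A)). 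The delicate point, and the main obstacle, is the transition at exactly $m = m_*$: the cutoff $m\, r(\CC_\theta)^{-1/d}$ must strictly exceed the support radius $(m_*-1)/S$, and it is precisely the a priori bound $\|\Sigma^{(\theta)}\| \ge 1$ -- coming from PSD plus unit diagonal rather than from smallness of $\tau$ -- that makes this work at the threshold and avoids a problematic regime where $\nu_m$ could be arbitrarily smaller than our uniform integral bound $\sqrt{m_*^d/N}$.
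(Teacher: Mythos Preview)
Your approach is essentially the paper's: reduce to the matrix $\Sigma^{(\theta)}$ via Proposition~\ref{prop:reduction}, use Gershgorin for positive definiteness and $\|\Sigma^{(\theta)}\|\in[1,5/4]$, and control the tail integral by a support argument. The ingredients and overall logic match.

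There is, however, a real gap in your tail argument for $m\le m_*-1$. You write
\[
\text{integral}\;\lesssim\; \frac{1}{r}+\tau h_N\bigl((2K)^d+K^d\bigr)S^{-d}\;\lesssim\;\frac{1}{r}\sqrt{\frac{m_*^d}{N}},
\]
but the last inequality requires $\sqrt{m_*^d/N}\gtrsim 1$, which is generically false: for $\nu_m=m^{-\alpha}$ one has $m_*^d/N\asymp N^{-2\alpha/(2\alpha+d)}\to 0$, so the diagonal contribution $1/r$ cannot be absorbed into $\sqrt{m_*^d/N}/r$. The paper avoids this by a further split. For $\sqrt{d}\le m\le m_*-1$, the region $\{\|x-y\|>m\,r(\CC_\theta)^{-1/d}\}$ forces $\|x-y\|_\infty>d^{-1/2}m\,r(\CC_\theta)^{-1/d}\ge S^{-1}$, so the diagonal block $\sum_\ell\mathbf{1}\{x,y\in I_\ell\}$ (supported on $\|x-y\|_\infty\le S^{-1}$) contributes nothing and only the $\tau h_N(2K)^dS^{-d}\lesssim \|\CC_\theta\|\nu_{m_*-1}\le \|\CC_\theta\|\nu_m$ perturbation survives. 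For the finitely many $m<\sqrt{d}$ (relevant only when $d\ge 2$), the crude bound $\int_D|k_\theta(x,y)|\,dy\le S^{-d}\|\Sigma^{(\theta)}\|_1\lesssim S^{-d}\asymp\|\CC_\theta\|$ suffices, since $\nu_m\ge \nu_{\lfloor\sqrt{d}\rfloor}$ is a fixed positive constant.

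What you flag as ``the delicate point'' at $m=m_*$ (namely $\|\Sigma^{(\theta)}\|\ge 1\Rightarrow r(\CC_\theta)^{-1/d}\ge S^{-1}$) is correct and used in the paper, but it is routine; the unaddressed issue is the diagonal term for small $m$.
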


\begin{proof}[Proof of Lemma \ref{lemma:banding lower bound inclusion 2}]
    
\begin{enumerate}[label=(\alph*)]
    \item $\Sigma^{(\theta)}$ (and then $\CC_{\theta}$) is positive semi-definite: By Gershgorin's theorem, for $\tau\in (0,4^{-d-1})$,
    \begin{align*}
    \lambda_{\min}(\Sigma^{(\theta)})\ge 1-\tau h_N (2K)^d= 1-\tau 2^d \sqrt{\frac{m_*^d}{N}}\ge 1-\tau 4^d \ge \frac{3}{4}.
    \end{align*}
    \item For every $\theta\in \{0,1\}^{\gamma_N}$, $\mathrm{Tr}(\CC_{\theta})=\int_{D}k_{\theta}(x,x)dx=\int_{D}\sum_{\ell=1}^{S^d}\mathbf{1}\{x\in I_{\ell}\}dx=1=\sup_{x}k_{\theta}(x,x)$.
    \item By Proposition~\ref{prop:reduction} \ref{eq:reductionb}, $\|\CC_{\theta}\|=S^{-d}\|\Sigma^{(\theta)}\|$. Recall that $r=S^d$, we take the unit vector $e_{r}=(0,0,\ldots,0,1)\in \R^{S^d}$,
    \[
    \|\Sigma^{(\theta)}\|\ge \langle e_r, \Sigma^{(\theta)} e_r \rangle=\Sigma^{(\theta)}_{rr}=1.
    \]
    On the other hand, $\|\Sigma^{(\theta)}\|\le \|\Sigma^{(\theta)}\|_{1}\le 1+\tau h_N(2K)^d\le \frac{5}{4}$. Then, $S^{-d}\le \|\CC_{\theta}\|\le \frac{5}{4} S^{-d}$ for all $\theta\in \{0,1\}^{\gamma_N}$, and so $r(\CC_{\theta})=\frac{\mathrm{Tr}(\CC_{\theta})}{\|\CC_{\theta}\|}=\frac{1}{\|\CC_{\theta}\|}\le S^d=r$ and $r(\CC_{\theta})=\frac{1}{\|\CC_{\theta}\|}\ge \frac{4}{5}S^d =\frac{4}{5}r$.
    \item For $m>m_{*}-1=2\sqrt{d}K$, 
    \begin{align*}
&\sup_{x\in D} \int_{\{y: \|x-y\| > mr(\CC_{\theta})^{-1/d}\}} |k_{\theta}(x,y)| dy \overset{\text{(i)}}{\le} \sup_{x\in D} \int_{\{y: \|x-y\|_{\infty} > d^{-1/2} mr(\CC_{\theta})^{-1/d}\}} |k_{\theta}(x,y)| dy\\
&\overset{\text{(ii)}}{\le}  \sup_{x\in D} \int_{\{y: \|x-y\|_{\infty} > d^{-1/2}mS^{-1}\}} |k_{\theta}(x,y)| dy
\overset{\text{(iii)}}{\le}  \sup_{x\in D} \int_{\{y: \|x-y\|_{\infty} > 2KS^{-1}\}} |k_{\theta}(x,y)| dy \overset{\text{(iv)}}{=}0,
\end{align*}
where (i) follows by $\|x-y\|\le \sqrt{d}\|x-y\|_{\infty}$, (ii) follows from $r(\CC_{\theta})^{-1/d}\ge r^{-1/d}=S^{-1}$, and (iii) follows by $m>2\sqrt{d}K$; if $\|x-y\|_{\infty}>2KS^{-1}$, there exists at least one coordinate $\omega$ such that $|x_{\omega}-y_{\omega}|>2KS^{-1}$, then $k_{\theta}(x,y)=0$ following by our construction and (iv) holds.

For $\sqrt{d}\le m\le m_{*}-1$,
\begin{align*}
\sup_{x\in D} \int_{\{y: \|x-y\| > mr(\CC_{\theta})^{-1/d}\}} |k_{\theta}(x,y)| dy & \le \sup_{x\in D} \int_{\{y: \|x-y\|_{\infty} > d^{-1/2}mS^{-1}\}} |k_{\theta}(x,y)| dy\\
&\overset{\text{(i)}}{\le} \tau h_N (2K)^d S^{-d}\overset{\text{(ii)}}{\lesssim} \|\CC_{\theta}\| \nu_{m_{*}-1}\overset{\text{(iii)}}{\le} \|\CC_{\theta}\| \nu_m,
\end{align*}
where (i) follows by noticing that $\|x-y\|_{\infty}> d^{-\frac{1}{2}}mS^{-1}\ge S^{-1}$ implies $\sum_{\ell=1}^{S^d} \mathbf{1}\{x,y\in I_{\ell}\}=0$, (ii) follows by $\|\CC_{\theta}\|\asymp S^{-d}$ and
\[
h_N=K^{-d}\sqrt{\frac{m_*^d}{N}}\le K^{-d}\sqrt{\frac{2^d(m_*-1)^d}{N}} < K^{-d}2^{d/2}\nu_{m_*-1},
\]
and (iii) follows by the monotonicity of $\{\nu_m\}$ and $m\le m_{*}-1$.

For $1<m\le \sqrt{d}$,
\begin{align*}
\sup_{x\in D} \int_{\{y: \|x-y\| > m r(\CC_{\theta})^{-1/d}\}} |k_{\theta}(x,y)| dy &\le \sup_{x\in D} \int_{y\in D} |k_{\theta}(x,y)| dy=S^{-d}\|\Sigma^{(\theta)}
\|_{\infty}\\
&\le S^{-d}(1+\tau h_N (2K)^{d})\asymp S^{-d}\lesssim \|\CC_{\theta}\| \nu_m. \tag*{\qedhere}
\end{align*}
\end{enumerate}
\end{proof}

\begin{lemma}[Assouad, see e.g. Lemma 24.3 in \cite{van2000asymptotic}]\label{lemma:assouad}  Let $\Theta=\{0,1\}^\gamma$ and let $T$ be an estimator based on an observation from a distribution in the collection $\left\{P_\theta, \theta \in \Theta\right\}$. Then, for all $s>0,$
$$
\max _{\theta \in \Theta} 2^s \mathbb{E}_\theta d^s \bigl(T, \psi(\theta)\bigr) \geq \min _{H\left(\theta, \theta^{\prime}\right) \geq 1} \frac{d^s\bigl(\psi(\theta), \psi\left(\theta^{\prime}\right)\bigr)}{H\left(\theta, \theta^{\prime}\right)} \cdot \frac{\gamma}{2} \cdot \min _{H\left(\theta, \theta^{\prime}\right)=1}\bigl\|\mathbb{P}_\theta \wedge \mathbb{P}_{\theta^{\prime}}\bigr\| .
$$
\end{lemma}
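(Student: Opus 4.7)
The plan is to apply the classical reduction to $\gamma$ coordinate-wise hypothesis tests combined with Le Cam's two-point inequality, as in van der Vaart's \emph{Asymptotic Statistics}. First, I would define the projection $\hat{\theta}:=\argmin_{\theta\in\Theta} d\bigl(T,\psi(\theta)\bigr)$, picking (when the minimizer is not unique) for instance the lexicographically smallest element, so that $\hat\theta$ is a measurable function of the data. By minimality, $d\bigl(T,\psi(\hat\theta)\bigr)\le d\bigl(T,\psi(\theta)\bigr)$ for every $\theta$, and the triangle inequality then gives
\[
d\bigl(\psi(\hat{\theta}),\psi(\theta)\bigr)\le d\bigl(\psi(\hat{\theta}),T\bigr)+d\bigl(T,\psi(\theta)\bigr)\le 2\,d\bigl(T,\psi(\theta)\bigr),
\]
so, for $s\ge 1$, $d^s\bigl(\psi(\hat{\theta}),\psi(\theta)\bigr)\le 2^s\,d^s\bigl(T,\psi(\theta)\bigr)$. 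Writing $c:=\min_{H(\alpha,\alpha')\ge 1}d^s\bigl(\psi(\alpha),\psi(\alpha')\bigr)/H(\alpha,\alpha')$, the definition of $c$ yields $d^s\bigl(\psi(\hat{\theta}),\psi(\theta)\bigr)\ge c\,H(\hat{\theta},\theta)$, whence $c\,H(\hat\theta,\theta)\le 2^s\, d^s\bigl(T,\psi(\theta)\bigr)$ pointwise.

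Second, I would take expectations under $\mathbb{P}_\theta$, average over the uniform prior on $\Theta=\{0,1\}^\gamma$, and decompose $H(\hat\theta,\theta)=\sum_{j=1}^\gamma \mathbf{1}\{\hat\theta_j\ne\theta_j\}$ to obtain
\[
c\sum_{j=1}^{\gamma}\frac{1}{2^\gamma}\sum_{\theta\in\Theta}\mathbb{P}_\theta(\hat{\theta}_j\ne\theta_j)\le 2^s\max_{\theta\in\Theta} \mathbb{E}_\theta\, d^s\bigl(T,\psi(\theta)\bigr).
\]
For each coordinate $j$, I would view $\hat\theta_j\in\{0,1\}$ as a test between $\mathbb{P}_\theta$ and $\mathbb{P}_{\theta^{\oplus j}}$ (with $\theta^{\oplus j}$ denoting $\theta$ with its $j$th bit flipped); Le Cam's two-point inequality yields
\[
\mathbb{P}_\theta(\hat\theta_j\ne\theta_j)+\mathbb{P}_{\theta^{\oplus j}}\bigl(\hat\theta_j\ne(\theta^{\oplus j})_j\bigr)\ge \bigl\|\mathbb{P}_\theta\wedge \mathbb{P}_{\theta^{\oplus j}}\bigr\|.
\]
Pairing $\theta$ with $\theta^{\oplus j}$ across the hypercube and averaging then gives $\frac{1}{2^\gamma}\sum_{\theta\in\Theta} \mathbb{P}_\theta(\hat\theta_j\ne\theta_j)\ge \tfrac{1}{2}\min_{H(\theta,\theta')=1}\|\mathbb{P}_\theta\wedge \mathbb{P}_{\theta'}\|$.

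Summing over $j=1,\ldots,\gamma$ and inserting into the previous display produces the claim
\[
2^s\max_{\theta\in\Theta} \mathbb{E}_\theta\, d^s\bigl(T,\psi(\theta)\bigr)\ge c\cdot\frac{\gamma}{2}\cdot\min_{H(\theta,\theta')=1}\bigl\|\mathbb{P}_\theta\wedge \mathbb{P}_{\theta'}\bigr\|,
\]
which upon substituting the definition of $c$ is the stated inequality. The main technical point to watch is the power-$s$ triangle inequality, where the factor $2^s$ is obtained by exploiting the minimality of the projection $\hat\theta$ before raising to the $s$th power, valid for $s\ge 1$; in the less common regime $s<1$, the same approach yields the weaker constant $2$ via subadditivity of $x\mapsto x^s$. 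The other ingredients---uniform-prior averaging, coordinate-wise Neyman-Pearson bounds, and the additive decomposition of Hamming loss---are entirely classical and require no essentially new input.
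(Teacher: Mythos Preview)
The paper does not prove this lemma; it merely states it as a classical result with a citation to van der Vaart. Your argument is the standard proof and is correct. One small remark: your closing caveat about $s<1$ is unnecessary, since you first apply the triangle inequality to $d$ itself to obtain $d\bigl(\psi(\hat\theta),\psi(\theta)\bigr)\le 2\,d\bigl(T,\psi(\theta)\bigr)$ and only then raise both sides to the $s$th power, which yields the factor $2^s$ for every $s>0$ without any appeal to subadditivity.
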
 


\begin{lemma}[Lower bound over $\F_2(r,\{\nu_m\})$]\label{lemma:lower_bound2}
Suppose $N> \log r$ and $r> m_*^d$. The minimax risk for estimating the covariance operator over $\F_2(r,\{\nu_m\})$ under the operator norm satisfies
\[
\inf_{\hat{\CC}}\sup_{\CC\in \F_2(r,\{\nu_m\})}\frac{\mathbb{E} \| \widehat{\mathcal{C}} - \mathcal{C} \|}{\|\mathcal{C} \| } \gtrsim \varepsilon_*.
\]
\end{lemma}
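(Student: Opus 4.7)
Reduce to a matrix estimation problem via Proposition~\ref{prop:reduction} and apply Fano's inequality on a Gilbert--Varshamov packing of $\{0,1\}^{\gamma_N}$. Since each $\CC_\theta\in\F_2(r,\{\nu_m\})$ is piecewise constant on $\{I_\ell\}_{\ell=1}^{S^d}$ with $S^d=r$, it corresponds to an $r\times r$ matrix $\Sigma^{(\theta)}$, and \eqref{eq:reduction2} yields
\[
\inf_{\hat{\CC}}\sup_{\CC\in\F_2}\frac{\E\|\hat{\CC}-\CC\|}{\|\CC\|}\ \ge\ \inf_{\hat{\Sigma}}\sup_{\theta\in\{0,1\}^{\gamma_N}}\frac{\E\|\hat{\Sigma}-\Sigma^{(\theta)}\|}{\|\Sigma^{(\theta)}\|}.
\]
The base matrix $\Sigma^{(0)}=I_r$ and the total perturbation has operator norm $O(\tau\sqrt{m_*^d/N})=O(\tau)$ uniformly in $\theta$ (using $m_*^d\le N$ from Lemma~\ref{lemma:basic_relation}(A)), so $\|\Sigma^{(\theta)}\|\asymp 1$ and it suffices to prove the absolute lower bound $\inf\sup\E\|\hat{\Sigma}-\Sigma^{(\theta)}\|\gtrsim\sqrt{m_*^d/N}\asymp\varepsilon_*$.

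\textbf{Separation and KL estimates.} By Gilbert--Varshamov there exists $\Theta_0\subseteq\{0,1\}^{\gamma_N}$ with $|\Theta_0|\ge 2^{\gamma_N/8}$ and pairwise Hamming distance $H(\theta,\theta')\ge\gamma_N/4$. The bit-$\ell$ perturbation is the rank-two $A_\ell=e_\ell u_\ell^\top+u_\ell e_\ell^\top$ with $u_\ell=\mathbf{1}_{T_\ell^\star}$, where $T_\ell^\star$ is the set of cells comprising $T_{x,2K}$ for $x\in I_\ell$; a direct count shows $|T_\ell^\star|\asymp K^d\asymp m_*^d$ uniformly in $\ell\in[\gamma_N]$, and the union $T_\star=\bigcup_\ell T_\ell^\star$ has $|T_\star|\asymp K^d$. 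The supports $\mathrm{supp}(A_\ell)$ are pairwise disjoint as matrix entries (since $m\in T_\ell^\star\Rightarrow m_k\ge\ell_k+2$ makes $\ell\in T_m^\star$ impossible), so the Frobenius norm is additive: $\|\Sigma^{(\theta)}-\Sigma^{(\theta')}\|_F^2=2\tau^2 h_N^2\sum_\ell|\theta_\ell-\theta'_\ell|^2|T_\ell^\star|\asymp\tau^2 h_N^2 H(\theta,\theta') K^d$. Expanding the Gaussian KL around $\Sigma^{(\theta')}\approx I$ and tensorizing over $N$ samples,
\[
\mathrm{KL}\bigl(\mathcal{N}(0,\Sigma^{(\theta)})^{\otimes N}\,\|\,\mathcal{N}(0,\Sigma^{(\theta')})^{\otimes N}\bigr)\ \asymp\ N\tau^2 h_N^2 H(\theta,\theta') K^d\ \asymp\ \tau^2 H(\theta,\theta')\ \lesssim\ \tau^2\gamma_N,
\]
using $h_N=K^{-d}\sqrt{m_*^d/N}$ and $K^d\asymp m_*^d$. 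For the operator-norm separation, test $\Sigma^{(\theta)}-\Sigma^{(\theta')}=\tau h_N\sum_\ell(\theta_\ell-\theta'_\ell)A_\ell$ against $v:=\mathbf{1}_{T^\circ}$ where $T^\circ:=T_\star\setminus[\gamma_N]$ (so that $|T^\circ|\asymp K^d$): since $e_\ell^\top v=0$ for every $\ell\in[\gamma_N]$, a direct expansion gives $(\Sigma^{(\theta)}-\Sigma^{(\theta')})v=\tau h_N\sum_\ell(\theta_\ell-\theta'_\ell)|T_\ell^\star\cap T^\circ|\,e_\ell$, with $|T_\ell^\star\cap T^\circ|\asymp K^d$ uniformly in $\ell$. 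Therefore
\[
\|\Sigma^{(\theta)}-\Sigma^{(\theta')}\|\ \ge\ \frac{\|(\Sigma^{(\theta)}-\Sigma^{(\theta')})v\|}{\|v\|}\ \gtrsim\ \tau h_N\sqrt{H(\theta,\theta')\,K^d}\ \asymp\ \tau\sqrt{H(\theta,\theta')/N}\ \gtrsim\ \tau\varepsilon_*.
\]

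\textbf{Conclusion and main obstacle.} Fano's inequality combined with Markov's inequality gives
\[
\inf_{\hat{\Sigma}}\max_{\theta\in\Theta_0}\E\|\hat{\Sigma}-\Sigma^{(\theta)}\|\ \gtrsim\ \tau\varepsilon_*\bigl(1-c'\tau^2-o(1)\bigr)\ \gtrsim\ \varepsilon_*,
\]
for $\tau$ chosen sufficiently small, completing the proof. The main technical obstacle is the operator-norm separation: the triangle inequality yields only the unhelpful \emph{upper} bound $\|\Sigma^{(\theta)}-\Sigma^{(\theta')}\|\le H(\theta,\theta')\,\tau h_N\sqrt{K^d}$, whereas the Fano argument needs a matching \emph{lower} bound of order $\tau h_N\sqrt{H(\theta,\theta')K^d}$. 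The key trick is to test against $\mathbf{1}_{T^\circ}$, which annihilates the $u_\ell e_\ell^\top$ half of each $A_\ell$ and reduces $(\Sigma^{(\theta)}-\Sigma^{(\theta')})v$ to a sum over the orthogonal basis vectors $\{e_\ell\}_{\ell\in[\gamma_N]}$, each with coefficient of size $\asymp\tau h_N K^d$.
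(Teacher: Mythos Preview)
Your proof is correct and follows the same overall architecture as the paper: reduce to a matrix problem via Proposition~\ref{prop:reduction}, lower bound the operator-norm separation by testing against a vector supported on $\asymp K^d$ cells disjoint from $[\gamma_N]$, and control the information via the Frobenius norm of the difference. The main methodological difference is that the paper applies Assouad's lemma directly on the full cube $\{0,1\}^{\gamma_N}$, whereas you use Fano's inequality on a Gilbert--Varshamov packing. Assouad is slightly more economical here: it only needs the KL bound for single-bit flips $H(\theta,\theta')=1$, so the paper can bound $\|\Sigma^{(\theta)}-\Sigma^{(\theta')}\|_F^2$ trivially (one nonzero row/column) and never needs your disjoint-support observation for the $A_\ell$'s. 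Your test set $T^\circ=T_\star\setminus[\gamma_N]$ plays the same role as the paper's $L=\{\ell:I_\ell\subseteq[K/S,2K/S]^d\}$; both choices work because they have size $\asymp K^d$, are disjoint from $[\gamma_N]$, and satisfy $|T_\ell^\star\cap(\cdot)|\asymp K^d$ uniformly in $\ell$. Either route yields the same rate $\sqrt{\gamma_N/N}\asymp\sqrt{m_*^d/N}\asymp\varepsilon_*$.
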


\begin{proof}[Proof of Lemma \ref{lemma:lower_bound2}]
We first notice that $\|\CC_{\theta}\|\asymp S^d$ for $\CC_{\theta}\in \F_2(r,\{\nu_m\})$, which leads to
\begin{align*}
\inf_{\hat{\CC}}\sup_{\CC\in \F_2(r,\{\nu_m\})} \frac{\mathbb{E} \| \widehat{\mathcal{C}} - \mathcal{C} \|}{\|\mathcal{C} \| }\asymp \inf_{\hat{\CC}}\sup_{\theta\in \{0,1\}^{\gamma_N} } S^d\, \E \|\hat{\CC}-\CC_{\theta}\|.
\end{align*}
Applying the lower bound reduction in Proposition~\ref{prop:reduction}, the inequality \eqref{eq:reduction1} in Proposition~\ref{prop:reduction} (with $M=S^d$) gives that
\[
\inf_{\hat{\CC}}\sup_{\theta\in \{0,1\}^{\gamma_N} } S^d\, \E \|\hat{\CC}-\CC_{\theta}\| \ge \inf_{\hat{\Sigma}}\sup_{\theta\in \{0,1\}^{\gamma_N}}  \E \|\hat{\Sigma}-\Sigma^{(\theta)}\|.
\]

Before we apply Assouad's Lemma \ref{lemma:assouad} to derive a lower bound for the covariance matrix estimation problem over the testing class $\{\Sigma^{(\theta)}:\theta\in \{0,1\}^{\gamma_N}\}$, we introduce some basic notation and definitions. Denote the joint distribution of $X_1,X_2, \ldots, X_N\iid \mcN(0,\Sigma^{(\theta)})$ by $\P_{\theta}$. For two probability measures $\P$ and $\mathbb{Q}$ with density $p$ and $q$ with respect to any common dominating measure $\mu$, write the total variation affinity $\|\P\land \mathbb{Q}\|=\int p\land q \, d\mu$. Let $H(\theta,\theta^{\prime})=\sum_{i=1}^{\gamma_N} |\theta_i-\theta^{\prime}_i|$ be the Hamming distance on $\{0,1\}^{\gamma_N}$.

Applying Assouad's Lemma \ref{lemma:assouad} with $d(\Sigma^{(\theta)},\Sigma^{(\theta^{\prime})})=\|\Sigma^{(\theta)}-\Sigma^{(\theta^{\prime})}\|, s=1, \gamma=\gamma_N$ gives that
\begin{align}\label{eq:assouad}
\inf_{\hat{\Sigma}}\sup_{\theta\in \{0,1\}^{\gamma_N}}  \E \|\hat{\Sigma}-\Sigma^{(\theta)}\| \ge \min_{H(\theta,\theta^{\prime})\ge 1}\frac{\|\Sigma^{(\theta)}-\Sigma^{(\theta^{\prime})}\|}{H(\theta,\theta^{\prime})}\cdot \frac{\gamma_N}{4}\cdot \min_{H(\theta,\theta^{\prime})=1}\|\P_{\theta}\land \P_{\theta^{\prime}}\|.
\end{align}
We shall prove the following bounds for the first and third factors on the right-hand side of \eqref{eq:assouad}:
\begin{enumerate}[label=(\alph*)]
    \item $\min_{H(\theta,\theta^{\prime})\ge 1}\frac{\|\Sigma^{(\theta)}-\Sigma^{(\theta^{\prime})}\|}{H(\theta,\theta^{\prime})}\gtrsim \frac{1}{\sqrt{N\gamma_N}}$; \label{eq:lowbound2_aux1}
    \item $\min_{H(\theta,\theta^{\prime})=1}\|\P_{\theta}\land \P_{\theta^{\prime}}\|\gtrsim 1$. \label{eq:lowbound2_aux2}
\end{enumerate}

\textbf{Proof of \ref{eq:lowbound2_aux1}.}
Let $L:=\left\{1\le \ell\le S^d: I_{\ell} \subseteq \left[\frac{K}{S},\frac{2K}{S}\right]^d \right\}$ and note that the cardinality of $L$ is $K^d$. We define a vector $v:=\sum_{\ell\in L} e_{\ell},$ where $\{e_\ell\}_{1\le \ell\le S^d}$ is the standard basis of $\R^{S^d}$ and $\omega=(\omega_i):=(\Sigma^{(\theta)}-\Sigma^{(\theta^{\prime})})v$. Note that there are exactly $H(\theta,\theta^{\prime})$ number of $\omega_i$ such that $|\omega_i|=\tau h_N K^d$, and $\|v\|=K^{d/2}$. This implies
\[
\|\Sigma^{(\theta)}-\Sigma^{(\theta^{\prime})}\|\ge \frac{\|(\Sigma^{(\theta)}-\Sigma^{(\theta^{\prime})})v\|}{\|v\|}=\frac{\sqrt{\|(\Sigma^{(\theta)}-\Sigma^{(\theta^{\prime})})v\|^2}}{\|v\|}= \frac{\sqrt{H(\theta,\theta^{\prime})(\tau h_N K^d)^2}}{K^{d/2}}.
\]
Recall that  $h_N=K^{-d}\sqrt{\frac{m_*^d}{N}}$ and $K=(m_{*}-1)/2\sqrt{d}$, then
\begin{align*}
&\min_{H(\theta,\theta^{\prime})\ge 1}\frac{\|\Sigma^{(\theta)}-\Sigma^{(\theta^{\prime})}\|}{H(\theta,\theta^{\prime})}\ge \min_{H(\theta,\theta^{\prime})\ge 1}\frac{\sqrt{H(\theta,\theta^{\prime})\Big(\tau K^{-d}\sqrt{\frac{m_*^d}{N}} K^d \Big)^2}}{H(\theta,\theta^{\prime})K^{d/2}}\\
& \hspace{3.25cm} =\min_{H(\theta,\theta^{\prime})\ge 1}\tau \sqrt{\frac{m_*^d}{N}} \frac{1}{\sqrt{H(\theta,\theta^{\prime})}K^{d/2}} 
=\tau \sqrt{\frac{m_*^d}{N}}\frac{1}{\sqrt{\gamma_N} K^{d/2}}
\asymp \frac{1}{\sqrt{N\gamma_N}}.
\end{align*}

\textbf{Proof of \ref{eq:lowbound2_aux2}.}
By \cite[Lemma 2.6]{tsybakov2008introduction}, the total variation affinity is lower bounded by
\[
\|\P_{\theta}\land \P_{\theta^{\prime}}\|\ge \frac{1}{2}\exp\biggl(-N \cdot D_{\mathrm{KL}}\Bigl(\mcN \bigl(0,\Sigma^{(\theta)}\bigr) \big\| \, \mcN \bigl(0,\Sigma^{(\theta^{\prime})}\bigr)\Bigr)\biggr).
\]
We will upper bound the Kullback-Leibler divergence using an explicit calculation. To that end,  note that for $\theta, \theta^{\prime}$ with $H(\theta,\theta^{\prime})=1$,
\begin{align}\label{eq:KLexplicit}
\begin{split}
\bigl\|(\Sigma^{(\theta^{\prime})})^{-1/2}\Sigma^{(\theta)}(\Sigma^{(\theta^{\prime})})^{-1/2}-I \bigr\| 
&\le \bigl\|(\Sigma^{(\theta^{\prime})})^{-1}\bigr\| \bigl\|\Sigma^{(\theta)}-\Sigma^{(\theta^{\prime})} \bigr\|\\ 
&=\frac{1}{\lambda_{\min} \bigl(\Sigma^{(\theta^{\prime})}\bigr)} \bigl\|\Sigma^{(\theta)}-\Sigma^{(\theta^{\prime})} \bigr\|\\
&\overset{\text{(i)}}{\le} \frac{4}{3} \bigl\|\Sigma^{(\theta)}-\Sigma^{(\theta^{\prime})} \bigr\|_{1}\overset{\text{(ii)}}{\le} \frac{4}{3}\tau h_N (2K)^d\overset{\text{(iii)}}{\le} \frac{1}{2},
\end{split}
\end{align}
where (i) follows by $\lambda_{\min} \bigl(\Sigma^{(\theta^{\prime})}\bigr)\ge \frac{3}{4}$ and $\|A\|\le \|A\|_1$ for any symmetric matrix $A$; (ii) follows by the fact there are exactly one nonzero row and column in the matrix $\Sigma^{(\theta)}-\Sigma^{(\theta^{\prime})}$, where the number of nonzero entries in that row/column is at most $(2K)^d$ and the absolute value of every nonzero entry is $\tau h_N$; (iii) follows from $\tau h_N (2K)^d\le \frac{1}{4},$ which we established while checking that $\F_2(r,\{\nu_m\})\subseteq \F(r,\{\nu_m\})$.

Denote by $\{\alpha_j\}_{j=1}^{S^d}$ the set of eigenvalues of the matrix $\bigl(\Sigma^{(\theta^{\prime})}\bigr)^{-1/2}\Sigma^{(\theta)} \bigl(\Sigma^{(\theta^{\prime})}\bigr)^{-1/2}-I$. It follows from \eqref{eq:KLexplicit} that $|\alpha_j|\le \frac{1}{2}$ for all $j$.
Hence, using the formula for the Kullback–Leibler divergence between two Gaussians (see e.g. \cite[Chapter 1]{pardo2018statistical}), we deduce that
\begin{align*}
&D_{\mathrm{KL}} \Bigl(\mcN \bigl(0,\Sigma^{(\theta)}\bigr) \big\| \, \mcN \bigl(0,\Sigma^{(\theta^{\prime})}\bigr) \Bigr)=\frac{1}{2}\left[\mathrm{Tr} \Bigl( \bigl(\Sigma^{(\theta^{\prime})} \bigr)^{-1}\Sigma^{(\theta)}-I \Bigr)-\log\frac{\det(\Sigma^{(\theta)})}{\det(\Sigma^{(\theta^{\prime})})}\right]\\
&=\frac{1}{2}\sum_{j=1}^{S^d} \bigl(\alpha_j-\log (1+\alpha_j)\bigr)\le \sum_{j=1}^{S^d} \alpha_j^2 
= \bigl\|(\Sigma^{(\theta^{\prime})})^{-1/2}\Sigma^{(\theta)}(\Sigma^{(\theta^{\prime})})^{-1/2}-I \bigr\|^2_{F}\\
&= \bigl\|(\Sigma^{(\theta^{\prime})})^{-1/2}\big(\Sigma^{(\theta)}-\Sigma^{(\theta^{\prime})}\big)(\Sigma^{(\theta^{\prime})})^{-1/2} \bigr\|^2_{F} \le \frac{16}{9} \bigl\|\Sigma^{(\theta)}-\Sigma^{(\theta^{\prime})} \bigr\|^2_{F},
\end{align*}
where in the first inequality we used that $x-\log(1+x)\le 2 x^2$ for $|x|\le \frac{1}{2}$, and the last inequality follows by the matrix norm inequality $\|ABC\|_{F}\le \|A\|\|B\|_{F}\|C^{\top}\|$ and $\lambda_{\min}(\Sigma^{(\theta^{\prime})})\ge \frac{3}{4}$.

Recall that $h_N=K^{-d}\sqrt{\frac{m_*^d}{N}}$ and $K=(m_{*}-1)/2\sqrt{d}$. For $\theta, \theta^{\prime}$ with $H(\theta,\theta^{\prime})=1$, there are exactly one nonzero row and column in the matrix $\Sigma^{(\theta)}-\Sigma^{(\theta^{\prime})}$, where the number of nonzero entries in that row/column is at most $(2K)^d$ and the absolute value of every nonzero entry is $\tau h_N$, so its squared Frobenius norm is bounded by 
\[
\bigl\|\Sigma^{(\theta)}-\Sigma^{(\theta^{\prime})} \bigr\|^2_{F}\le 2 \tau^2 h_N^2 (2K)^d\asymp \frac{K^{-d} m_*^{d}}{N} \asymp \frac{1}{N}.
\]
As a result,
\[
\bigl\|\P_{\theta}\land\P_{\theta^{\prime}} \bigr\|\ge \frac{1}{2}\exp\biggl(-N \cdot D_{\mathrm{KL}}\Bigl(\mcN \bigl(0,\Sigma^{\theta}\bigr) \big\| \, \mcN \bigl(0,\Sigma^{(\theta^{\prime})}\bigr)\Bigr)\biggr)\gtrsim 1,
\]
completing the proof of (b).

Combining \eqref{eq:assouad} with (a) and (b) gives that
\begin{align*}
\inf_{\hat{\Sigma}}\sup_{\theta\in \{0,1\}^{\gamma_N}}  \E \bigl\|\hat{\Sigma}-\Sigma^{(\theta)} \bigr\| &\ge \min_{H(\theta,\theta^{\prime})\ge 1}\frac{ \bigl\|\Sigma^{(\theta)}-\Sigma^{(\theta^{\prime})} \bigr\|}{H(\theta,\theta^{\prime})}\cdot \frac{\gamma_N}{4}\cdot \min_{H(\theta,\theta^{\prime})=1} \bigl\|\P_{\theta}\land \P_{\theta^{\prime}} \bigr\|\\
&\gtrsim \frac{1}{\sqrt{N\gamma_N}}\cdot \gamma_N = \sqrt{\frac{\gamma_N}{N}}\asymp \sqrt{\frac{m_*^d}{N}}\ge \varepsilon_*,
\end{align*}
where the last inequality follows from Lemma \ref{lemma:basic_relation} (C).  This completes the proof.
\end{proof}

\begin{lemma}[Lower bound over $\F_3(r,\{\nu_m\})$]\label{lemma:lower_bound3}
Suppose $r< m_*^d$. The minimax risk for estimating the covariance operator over $\F_3(r,\{\nu_m\})$ under the operator norm satisfies
\[
\inf_{\hat{\CC}}\sup_{\CC\in \F_3(r,\{\nu_m\})}\frac{\mathbb{E} \| \widehat{\mathcal{C}} - \mathcal{C} \|}{\|\mathcal{C} \| } \gtrsim \sqrt{\frac{r}{N}}.
\]
\end{lemma}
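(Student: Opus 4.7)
The approach mirrors the strategy of Lemma \ref{lemma:lower_bound2}: first reduce the infinite-dimensional operator problem to a finite-dimensional matrix estimation problem via Proposition \ref{prop:reduction}, then apply Assouad's lemma over the $2^\gamma$ hypotheses $\{\Sigma^{(\theta)}: \theta\in\{0,1\}^\gamma\}$ with $\gamma = r/2^d \asymp r$. Since $\Sigma^{(\theta)}$ is a perturbation of $I_r$ by entries of magnitude $\tau/\sqrt{Nr}$, one has $\|\Sigma^{(\theta)}\| \asymp 1$, hence $\|\CC_\theta\| \asymp 1/r$ and $r(\CC_\theta)\asymp r$ by Proposition \ref{prop:reduction}(b), so the relative-error reduction \eqref{eq:reduction2} shows it suffices to prove $\inf_{\hat\Sigma}\sup_\theta \E\|\hat\Sigma - \Sigma^{(\theta)}\| \gtrsim \sqrt{r/N}$. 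A preliminary verification that $\F_3 \subseteq \F$ is also needed; its banded tail condition is the only nontrivial item, and holds because under the hypothesis $r < m_*^d$ (equivalently, $S<m_*$), the definition of $m_*$ forces $\nu_S > \sqrt{r/N}$, so the off-diagonal mass is at most $\tau/\sqrt{Nr}\lesssim \nu_m/r \asymp \|\CC_\theta\|\nu_m$ for all $m\ge 1$, for $\tau$ a sufficiently small constant.

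Applying Assouad's lemma requires the two ingredients (a) $\min_{H(\theta,\theta')\ge 1}\|\Sigma^{(\theta)} - \Sigma^{(\theta')}\|/H(\theta,\theta')$ and (b) $\min_{H(\theta,\theta')=1}\|\P_\theta\wedge\P_{\theta'}\|$, where $\P_\theta$ denotes the law of $N$ i.i.d.\ samples from $\mcN(0,\Sigma^{(\theta)})$. For (b), a single flip at position $\ell$ gives $\Sigma^{(\theta)} - \Sigma^{(\theta')} = \pm\frac{\tau}{\sqrt{Nr}}(e_\ell v_\ell^\top + v_\ell e_\ell^\top)$ with $v_\ell := \mathbf{1}_{\{j: I_j\subset T_{I_\ell}\}}$ satisfying $\|v_\ell\|^2 = |T_{I_\ell}|\le r$. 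Therefore $\|\Sigma^{(\theta)} - \Sigma^{(\theta')}\|_F^2 \le 2\tau^2/N$, and repeating the explicit Gaussian KL computation of Lemma \ref{lemma:lower_bound2} yields $D_{\mathrm{KL}}(\P_\theta\|\P_{\theta'}) \lesssim \tau^2$ and hence $\|\P_\theta\wedge\P_{\theta'}\| \gtrsim 1$ for $\tau$ small.

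The main technical obstacle is step (a). A naive Frobenius bound $\|A\| \ge \|A\|_F/\sqrt{\mathrm{rank}(A)}$ with $A = \Sigma^{(\theta)} - \Sigma^{(\theta')}$ of rank at most $2H$ yields only $\|A\| \gtrsim \tau/\sqrt{N}$, so $\|A\|/H$ minimized over $H\le \gamma$ is $\tau/(\gamma\sqrt{N})$, and Assouad then gives only a bound of order $\tau/\sqrt{N}$ — a factor of $\sqrt{r}$ short of the target $\sqrt{r/N}$. The fix exploits the geometry of the construction: $J := \{\ell: \theta_\ell \ne \theta_\ell'\} \subseteq \{1,\ldots,\gamma\}$ indexes cells $I_\ell \subset [0,1/2]^d$, while each $T_{I_\ell}$ lies in the complementary upper-right region, so $J$ and $U_J := \bigcup_{\ell\in J} T_{I_\ell}$ are disjoint subsets of $\{1,\ldots,S^d\}$. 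Choosing the unit test vector $u := |U_J|^{-1/2}\mathbf{1}_{U_J}$, the terms $v_\ell(e_\ell^\top u)$ vanish because $u_\ell = 0$ for $\ell\in J$, and a direct calculation gives
\[
Au \;=\; \frac{\tau}{\sqrt{N r\, |U_J|}}\sum_{\ell\in J} s_\ell\, |T_{I_\ell}|\, e_\ell,
\]
so that $\|Au\|^2 = \frac{\tau^2}{Nr|U_J|}\sum_{\ell\in J}|T_{I_\ell}|^2 \gtrsim \tau^2 H/N$, using $|T_{I_\ell}|\gtrsim r$ for cells in the lower-left subcube and $|U_J|\le r$. This yields $\|A\|/H \ge \tau/\sqrt{NH} \ge \tau/\sqrt{N\gamma}\asymp 1/\sqrt{Nr}$.

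Combining these via Assouad's lemma gives $\inf_{\hat\Sigma}\sup_\theta \E\|\hat\Sigma - \Sigma^{(\theta)}\| \gtrsim \gamma\cdot \tau/\sqrt{Nr} \asymp \sqrt{r/N}$, and the claim of the lemma follows from \eqref{eq:reduction2}.
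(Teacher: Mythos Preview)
Your overall strategy matches the paper's (which simply says ``follow the same arguments as for $\F_2$''): reduce via Proposition~\ref{prop:reduction}, then apply Assouad over $\{0,1\}^\gamma$ with $\gamma\asymp r$, control the affinity via the Gaussian KL computation, and exhibit a test vector to lower bound $\|\Sigma^{(\theta)}-\Sigma^{(\theta')}\|/H(\theta,\theta')$. Steps (b) and the reduction are correct.

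However, step (a) contains a genuine gap. Your claim that ``each $T_{I_\ell}$ lies in the complementary upper-right region, so $J$ and $U_J$ are disjoint'' is false. For $\ell$ indexing the cell at position $(0,\dots,0)$, one has $T_{I_\ell}=[2/S,1]^d$, which (for $S\ge 5$) overlaps the lower-left subcube $[0,1/2]^d$ and hence may contain other cells indexed by $J$. Concretely, if $J$ contains cells at positions $(0,\dots,0)$ and $(2,\dots,2)$, the second lies in $T_{I_{(0,\dots,0)}}\subseteq U_J$, so $J\cap U_J\ne\emptyset$. Consequently your formula $Au=\frac{\tau}{\sqrt{Nr|U_J|}}\sum_{\ell\in J}s_\ell|T_{I_\ell}|e_\ell$ is incorrect: the terms $v_{\ell'}\langle e_{\ell'},u\rangle$ for $\ell'\in J\cap U_J$ do not vanish, and they can interfere with the $e_\ell$-components since $\mathrm{supp}(v_{\ell'})$ may meet $J$.

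The fix is simple and is exactly what the paper does in the $\F_2$ case: use the \emph{fixed} test vector $v=\mathbf 1_L$ with $L=\{\ell:I_\ell\subseteq[1/2,1]^d\}$, so that $L$ and $\{1,\dots,\gamma\}$ are disjoint by construction. Then for $\ell\in J$ one gets $(Av)_\ell=\pm\frac{\tau}{\sqrt{Nr}}|L\cap T_{I_\ell}|\asymp\frac{\tau}{\sqrt{Nr}}\,r$, while $(Av)_\ell=0$ for $\ell\notin J$ (since $A_{\ell m}\ne 0$ forces $\ell\in J$ or $m\in J$, and $v_m=0$ for $m\in J$). This yields $\|A\|\ge\|Av\|/\|v\|\gtrsim\tau\sqrt{H/N}$, hence $\|A\|/H\gtrsim 1/\sqrt{N\gamma}\asymp 1/\sqrt{Nr}$, and Assouad gives $\gamma/\sqrt{Nr}\asymp\sqrt{r/N}$ as desired.

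A smaller point: your verification of the banded tail condition for $\F_3\subseteq\F$ only covers $m\le S$ (via $\nu_m\ge\nu_S>\sqrt{r/N}$). For $d\ge 2$ and $S<m\le S\sqrt d$ the integral need not vanish, and monotonicity of $\{\nu_m\}$ goes the wrong way; you should also note that for $m>S\sqrt d$ the domain of integration is empty, and handle the intermediate range $S<m\le S\sqrt d$ (still $<m_*$ when $d=1$, but requiring a short extra check for $d\ge 2$).
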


\begin{proof}[Proof of Lemma \ref{lemma:lower_bound3}]
 Following the same arguments as for $\F_2(r,\{\nu_m\})$, it is possible to check that $\F_3(r,\{\nu_m\})\subseteq \F(r,\{\nu_m\})$ and that the lower bound holds. We omit the proof for brevity.
\end{proof}

 \section{Proofs of results in Section \ref{sec:sparse}}\label{app:B}
    
\begin{proof}[Proof of Lemma \ref{lemma:Gamma1}]
We observe that
    \begin{align*}
    \|k\|_q^q \|k\|_{\infty}^{1-q}&=\sup_{x\in D}\left(\int_{D} |k(x,y)|^q dy\right) \|k\|^{1-q}_{\infty}
    =\sup_{x\in D}\left(\int_{D} \left(\frac{|k(x,y)|}{\|k\|_{\infty}}\right)^q dy\right) \|k\|_{\infty}\\
    &\overset{\text{(i)}}{\ge }\sup_{x\in D}\left(\int_{D} \frac{|k(x,y)|}{\|k\|_{\infty}} dy\right) \|k\|_{\infty}=\|k\|_1,
    \end{align*}
    where (i) follows since $q\in [0,1]$  and $|k(x,y)|\le \|k\|_{\infty}$. This implies $\Gamma_1(q,\CC)\ge \Gamma_1(1,\CC)$. The second inequality $\Gamma_1(1,\CC)\ge 1$ follows by \cite[Lemma B.1]{al2025covariancea}.
\end{proof}

\begin{proof}[Proof of Theorem \ref{thm:sparse_upper_bound}]
Set
\[
\rho := c_0 \frac{\sqrt{\|k\|_{\infty}}}{\sqrt{N}}\E\biggl[\sup_{x\in D} u(x)\biggr].
\]
By \cite[Theorem 2.2]{al2025covariancea},
\[
\begin{split}
\E \|\widehat{\CC}_{\widehat{\rho}}-\CC\| &\lesssim \|k\|_q^q \rho^{1-q}+\rho e^{-c N \big(\frac{\rho}{\|k\|_{\infty}}\big)\land \big(\frac{\rho}{\|k\|_{\infty}}\big)^2} \\
 &\lesssim \|k\|_q^q \|k\|_{\infty}^{1-q}\left(\frac{\Gamma_2(\CC)}{\sqrt{N} }\right)^{1-q}+\|k\|_{\infty}\frac{\Gamma_2(\CC)}{\sqrt{N}}e^{-c c_0 \Gamma^2_2(\CC)}\\
 &= \|\CC\| \left(\Gamma_{1}(q,\CC)\left(\frac{\Gamma_2(\CC)}{\sqrt{N} }\right)^{1-q}+\Gamma_1(0,\CC)e^{-cc_0\Gamma_2^2(\CC)}\frac{\Gamma_2(\CC)}{\sqrt{N}}\right)\\
 &\le \|\CC\| \left(\Gamma_{1}(q,\CC)\left(\frac{\Gamma_2(\CC)}{\sqrt{N} }\right)^{1-q}+\frac{\Gamma_2(\CC)}{\sqrt{N}}\right),
\end{split}
\]
where the last inequality follows by $\Gamma_1(0,\CC)e^{-cc_0\Gamma_2^2}\le \Gamma_1(0,\CC)e^{-C_0\Gamma_2^2}\le 1$. Therefore,
\begin{equation*}
    \frac{\E \|\widehat{\CC}_{\widehat{\rho}}-\CC\|}{\|\CC\|}
    \lesssim \Gamma_{1}(q,\CC)\left(\frac{\Gamma_2(\CC)}{\sqrt{N} }\right)^{1-q}+\frac{\Gamma_2(\CC)}{\sqrt{N}}\lesssim  \Gamma_{1}(q,\CC)\left(\frac{\Gamma_2(\CC)}{\sqrt{N} }\right)^{1-q}.  \qedhere
\end{equation*}
\end{proof}

\begin{lemma}\label{lemma:app sparsity lower bound inclusion}
    $\F_1(r,\eps_{N,r})\subseteq \F(\Gamma_1 (q),\Gamma_2).$
\end{lemma}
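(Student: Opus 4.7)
The goal is to verify, for every $\CC \in \F_1(r,\eps_{N,r})$, the three defining properties of $\F(\Gamma_1(q),\Gamma_2)$: that $\CC$ is a bona fide covariance operator, that $\Gamma_1(q,\CC)\leq \Gamma_1(q)$, and that $\Gamma_2(\CC)\leq \Gamma_2$. The unifying observation is that every kernel in $\F_1(r,\eps_{N,r})$ is piecewise constant on the partition $\{I_i\}_{i=1}^{r+1}$, so Proposition~\ref{prop:reduction}(b) together with direct integration reduces the operator-level quantities $\|\CC\|$, $\|k\|_\infty$, and $\|k\|_q^q$ to matrix-level quantities computed from $\Sigma(\theta)$. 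This reduction is the central computational tool throughout.

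For positive semi-definiteness of $\Sigma(\theta)$, I would exploit the block-diagonal structure and reduce to showing that $\frac{1}{2}(I_r + \eps_{N,r} M)$ is PSD, where $M := \sum_m \xi_m A_m(\lambda^m)$. By construction $M$ is symmetric with entries in $\{0,1\}$, zero diagonal, and row/column sums bounded by $2\ell$, so Gershgorin yields $\|M\|\leq 2\ell$. It therefore suffices to verify $2\ell\eps_{N,r} < 1$. This is where the chain of assumptions comes into play: plugging in $\ell \leq \tfrac{1}{2}\Gamma_1(q)\eps_{N,r}^{-q}$, $\eps_{N,r}=\nu\sqrt{\log r/N}$, $\log r \leq \Gamma_2^2/2$, the hypothesis $\Gamma_1(q)\leq MN^{(1-q)/2}\Gamma_2^{-3+q}$, and $\Gamma_2\geq 2$, the condition reduces to $\nu^{1-q}<1/(3M)$, which is assumed. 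As a byproduct, this gives $\|\Sigma\| = 1$, which simplifies the subsequent calculations.

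For the $\Gamma_1(q,\CC)$ bound, the piecewise-constant structure gives $\|k\|_q^q=\frac{1}{r+1}\max_i\sum_j|\Sigma_{ij}|^q$, $\|k\|_\infty=1$, and $\|\CC\|=1/(r+1)$. Substituting, $\Gamma_1(q,\CC)$ equals the maximum row $q$-sum of $\Sigma(\theta)$. The first row contributes $1\leq \Gamma_1(q)$ by Lemma~\ref{lemma:Gamma1}, and every other row contributes at most $(1/2)^q + 2\ell(\eps_{N,r}/2)^q$, which is controlled by $\Gamma_1(q)$ up to an absolute constant via the choice of $\ell$.

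The most delicate step is the capacity bound $\Gamma_2(\CC)\leq \Gamma_2$. The plan is to observe that $u\sim \mathrm{GP}(0,\CC)$ inherits the piecewise-constant structure, so setting $U_i := u|_{I_i}$ we have $U\sim\mcN(0,\Sigma(\theta))$ and $\sup_{x\in D}u(x)=\max_{1\leq i\leq r+1}U_i$ almost surely. Since the diagonal entries of $\Sigma(\theta)$ are at most $1$, the standard bound for the expected maximum of centered Gaussians yields $\E[\max_i U_i]\leq \sqrt{2\log(r+1)}$. The specific choice $r=\lfloor\exp(\Gamma_2^2/2)\rfloor-1$ is precisely what is needed to turn this into $\sqrt{2\log(r+1)}\leq \Gamma_2$; combined with $\|k\|_\infty=1$ this yields $\Gamma_2(\CC)\leq \Gamma_2$. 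The main obstacle I anticipate is the bookkeeping in the PSD step, since the scale parameters $\Gamma_1(q),\Gamma_2,\ell,\eps_{N,r},r$ must align so that $2\ell\eps_{N,r}<1$ holds with enough slack to simultaneously deliver the clean identities $\|\Sigma\|=1$ and $\|k\|_\infty=1$ used in the last two steps.
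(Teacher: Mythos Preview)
Your proposal is correct and follows essentially the same approach as the paper's proof: both exploit the piecewise-constant structure to reduce operator-level quantities to matrix-level computations on $\Sigma(\theta)$, use a Gershgorin/row-sum argument to get $\|\Sigma\|=1$ via the bound $2\ell\eps_{N,r}\leq 1$ (equivalently $M\nu^{1-q}<1$), compute the maximum row $q$-sum to control $\Gamma_1(q,\CC)$, and apply the standard Gaussian maximum bound $\E[\max_i U_i]\leq\sqrt{2\log(r+1)}$ together with the choice $r=\lfloor\exp(\Gamma_2^2/2)\rfloor-1$ for $\Gamma_2(\CC)$. The only minor difference is that the paper additionally verifies Assumption~\ref{assumption:main2}(i), namely $\Gamma_1(0,\CC)\exp(-C_0\Gamma_2^2)\leq 1$ with $C_0=1/2$, which is not strictly part of the class $\F(\Gamma_1(q),\Gamma_2)$ but is used for the upper bound; your remark that the $\Gamma_1(q,\CC)$ bound holds ``up to an absolute constant'' is also how the paper effectively handles it.
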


\begin{proof}[Proof of Lemma \ref{lemma:app sparsity lower bound inclusion}]
    \begin{enumerate}[label=(\alph*)]
    \item $\sup_{x\in D}k(x,x)=\sup_{1\leq j \leq r+1}\Sigma_{jj}=1.$
    \item For $\CC\in \F_1(r,\eps_{N,r}),$
    \begin{align*}
        \Gamma_1(q,\CC)&=\frac{\sup_{x\in D} \int_D |k(x,y)|^q dy}{\|\CC\|}
        =\frac{\max_{1\le i\le r+1}\sum_{j=1}^{r+1}|\Sigma_{ij}|^q \text{Vol}(I_{j})}{\frac{1}{r+1}\|\Sigma\|}\\
        &=\frac{\max_{1\leq i \leq r+1}\sum_{j=1}^{r+1}|\Sigma_{ij}|^q}{\|\Sigma\|} 
        \leq \frac{\max\{1,\frac{1}{2}(2\ell \eps_{N,r}^q+1)\}}{\|\Sigma\|}\\
        &\leq \frac{\max\{1,\frac{1}{2}(\Gamma_1(q)+1)\}}{\|\Sigma\|}\leq \frac{\Gamma_1(q)}{\|\Sigma\|}.
    \end{align*}
    Since $\Sigma(\theta) e_1=e_1$ for any $\theta\in \Theta$, we have that $\|\Sigma(\theta)\|\geq 1$. Consequently, it holds that
    $$
    \Gamma_1(q,\CC)\leq \Gamma_1(q).
    $$
    \item Since $\|\Sigma\|\leq \|\Sigma\|_1\leq \max\{1,\ell\eps_{N,r}+1/2\}\leq \max\{1,\frac{M\nu^{1-q}+1}{2}\}\leq 1$ for any $\CC\in \F_1(r,\eps_{N,r})$, we have that $\|\Sigma\|=1$ for any $\CC\in \F_1(r,\eps_{N,r}).$ It follows that
    \begin{align*}
        \Gamma_2(\CC)=\mathbb{E}_{u\sim \mathrm{GP}(0,\CC)}\biggl[\sup_{x\in D} u(x)\biggr]=\mathbb{E}_{\vec{u}\sim \mathcal{N}(0,\Sigma)}\biggl[\max_{1\leq i\leq r+1} \vec{u}_i\biggr]\leq \sqrt{2\log (r+1)}\leq \Gamma_2.
    \end{align*}
    \item For any $\CC\in \F_1(r,\eps_{N,r}),$ we have that \begin{align*}
        \Gamma_1(0,\CC)=\frac{\sup_{x\in D}\int_D|k(x,y)|^0dy}{\|\CC\|}\le\frac{\max_{1\leq i \leq r+1}\sum_{j=1}^{r+1}|\Sigma_{ij}|^0}{\|\Sigma\|}\leq \frac{r}{\|\Sigma\|}\leq r.
    \end{align*}
    Since $r\leq \exp(\frac{1}{2}\Gamma_2^2),$ we have that, for $C_0=\frac{1}{2},$
    $\Gamma_1(0,\CC)\exp\left(-C_0\Gamma_2^2 \right)\leq 1.$ \qedhere
\end{enumerate}
\end{proof}

   \begin{lemma}[Lower bound over $\F_1(r,\eps_{N,r})$] \label{lemma:approximate sparsity lower bound 2}
    Under the assumptions of Theorem \ref{thm:approximate sparsity lower bound}, the minimax risk for estimating the covariance operator over $\F_1(r,\eps_{N,r})$ under the operator norm satisfies 
    \begin{align*}
        \inf_{\hat{\CC}}\sup_{\CC\in \F_1(r,\eps_{N,r})}\frac{\mathbb{E}\|\hat{\CC}-\CC\|}{\|\CC\|} \gtrsim \Gamma_1(q)\left(\frac{\Gamma_2}{\sqrt{N}} \right)^{1-q}.
    \end{align*}
    \end{lemma}

 \begin{proof}[Proof of Lemma \ref{lemma:approximate sparsity lower bound 2}]
    Observe that 
    \begin{align*}
        \inf_{\hat{\CC}}\sup_{\CC\in \F_1}\frac{\mathbb{E}\|\hat{\CC}-\CC\|}{\|\CC\|} \hspace{-0.05cm} &\geq \inf_{\hat{\Sigma}}\max_{\Sigma\in \F_0}\frac{\mathbb{E}\|\hat{\Sigma}-\Sigma\|}{\|\Sigma\|} 
         \geq \inf_{\hat{\Sigma}}\max_{\Sigma\in \F_0}\|\hat{\Sigma}-\Sigma\|\\
         & \hspace{-0.75cm} = \inf_{\hat{\Sigma}_{r}}\max_{\Sigma\in \F_0} \biggl\|\begin{bmatrix}
            1 & \,\,\,\, 0_r^\top\\ \, 0_r &  \,\,\,\hat{\Sigma}_{r}
        \end{bmatrix}-\Sigma \biggr\|
         = \inf_{\hat{\Sigma}_{r}}\max_{\theta\in  \Theta}\frac{1}{2} \Bigl\|\hat{\Sigma}_{r}-I_{r}-\eps_{N,r}\sum_{m=1}^{r^*}\xi_mA_m(\lambda_m) \Bigr\|.\end{align*}
         From Lemma 3 in \cite{cai2012optimal},  we have that 
        \begin{align}\label{eq:cai approximate sparsity lower bound}
            \inf_{\hat{\Sigma}}\max_{\theta\in \Theta}\mathbb{E}\|\hat{\Sigma}-\Sigma(\theta)\|\geq \alpha r^* \min_{1\leq i \leq r^*}\|\bar{\mathbb{P}}_{i,0}\wedge \bar{\mathbb{P}}_{i,1}\|,
        \end{align}
        where $\alpha:=\min_{(\theta,\theta'):H(\xi(\theta),\xi(\theta'))\geq 1}\frac{\|\Sigma(\theta)-\Sigma(\theta')\|}{H(\xi(\theta),\xi(\theta'))}$, and $\bar{\mathbb{P}}_{i,a}=\frac{1}{2^{r^*-1}\text{Card}(\Lambda)}\sum_{\theta:\xi_i(\theta)=a}\mathbb{P}_{\theta}.$ Here, $H(\xi(\theta),\xi(\theta'))$ denotes the Hamming distance between the components of $\theta$ and $\theta'$ in $\Xi$. The distribution $\bar{\mathbb{P}}_{i,a}$ is the mixture distribution over all $\mathbb{P}_{\theta}=\mathcal{N}(0,\Sigma(\theta))$ with $\xi_i(\theta)$ fixed to be equal to $a$ and the other components of $\theta$ varying across all values in $\Theta$. From Lemma 6 in \cite{cai2012optimal}, under the assumptions that $1\leq N^{\beta}\leq \lfloor\exp (\frac{1}{2}\Gamma_2^2)\rfloor-1$ and $\Gamma_1(q)\leq MN^{(1-q)/2}\Gamma_2^{-3+q}$ along with our choice of $\nu$, it holds that
        \begin{align}\label{eq:cai mixture lower bound}
            \min_{1\leq i\leq r^*}\|\bar{\mathbb{P}}_{i,0}\wedge \bar{\mathbb{P}}_{i,1}\|\geq c_1,
        \end{align}
        for a universal constant $c_1>0.$ It remains to prove a lower bound for $\alpha.$ Let $v\in \R^r$ have entries $v_i=0$ for $1\leq i\leq r-r^*$ and $v_i=1$ for $r-r^*+1\leq i \leq r.$ Denoting $w=\bigl(\Sigma(\theta)-\Sigma(\theta')\bigr)v$, we have that $|w_i|=\ell\eps_{N,r}$ if $|\xi_i(\theta)-\xi_i(\theta')|=1.$ Since there are at least $H\bigl(\xi(\theta),\xi(\theta')\bigr)$ elements with $|w_i|=\ell\eps_{N,r}$, we have that
        $$
        \|\bigl(\Sigma(\theta)-\Sigma(\theta')\bigr)v\|\geq \sqrt{H\bigl(\xi(\theta),\xi(\theta')\bigr)} \, \ell\eps_{N,r}.
        $$
        If $H\bigl(\xi(\theta),\xi(\theta')\bigr)\geq 1,$ it follows that
        \begin{align*}
            \frac{\|\Sigma(\theta)-\Sigma(\theta')\|}{H\bigl(\xi(\theta),\xi(\theta')\bigr)}\geq \frac{\|\bigl(\Sigma(\theta)-\Sigma(\theta')\bigr)v\|}{\|v\| H\bigl(\xi(\theta),\xi(\theta')\bigr)}\geq \frac{\ell \eps_{N,r}}{\|v\| \sqrt{H\bigl(\xi(\theta),\xi(\theta')\bigr)}}\geq \frac{\ell \eps_{N,r}}{\|v\| \sqrt{r}}=\frac{\ell \eps_{N,r}}{r},
        \end{align*}
        since $H\bigl(\xi(\theta),\xi(\theta')\bigr)\leq r^*\leq r$ and $\|v\| \le\sqrt{r}$ by construction. Thus, we have shown that
        \begin{align}\label{eq:alpha lower bound}
            \alpha \geq \frac{\ell \eps_{N,r}}{r}.
        \end{align}
        Combining \eqref{eq:cai mixture lower bound} and \eqref{eq:alpha lower bound} with \eqref{eq:cai approximate sparsity lower bound} yields that
        \begin{align*}
                \inf_{\hat{\Sigma}}\max_{\theta\in \Theta}\mathbb{E}\|\hat{\Sigma}-\Sigma(\theta)\|\gtrsim \frac{r^*\ell \eps_{N,r}}{r}\gtrsim \ell \eps_{N,r} \gtrsim \Gamma_1(q)\left(\sqrt{\frac{\log r}{N}}\right)^{1-q}.
         \end{align*}
    To conclude, note that since $\Gamma_2\geq 2$, we have that  
    \begin{equation}
           \sqrt{\log r}=\sqrt{\log \left( \Bigl\lfloor \exp \Bigl(\frac{1}{2}\Gamma_2^2 \Bigr)\Bigr\rfloor-1\right)}\geq \frac{1}{2}\Gamma_2. \tag*{\qedhere}
    \end{equation}
    \end{proof}

\end{appendix}

\end{document}